\newenvironment{plainfootnotes}{
  \deffootnote[0em]{0em}{0em}{}
}{
  \deffootnote[1em]{1.5em}{1em}{\textsuperscript{\thefootnotemark}}
}
\newif\ifsubsectionstylePrefixParagraph
\newif\ifsubsectionstyleRunin
\titleformat{\section}[hang]
{\Large\sffamily\bfseries}
{\thesection\hspace{0.1em}{|}}{0.25em}{}
\titleformat{\subsection}[runin]
{\normalfont\bfseries}
{\S\hspace{.1em}\thesubsection}{0.35em}{}[.\hspace*{.5em}]
\titleformat{\subsection}[runin]
{\normalfont\bfseries}
{\thesubsection\hspace{.1em}|}{0.25em}{}[.\hspace*{.5em}]
\titleformat{\subsection}[wrap]
{\normalfont\bfseries\selectfont\filright}
{\S\thesubsection}{.35em}{}
\titleformat{\subsection}[wrap]
{\normalfont\bfseries\selectfont\filright}
{\thesubsection\hspace{.1em}|}{.25em}{}
\titleformat{\subsubsection}[runin]
{\normalfont\bfseries}
{\S\hspace{.1em}\thesubsubsection}{0.35em}{}[.]
\titleformat{\subsubsection}[runin]
{\normalfont\bfseries}
{\thesubsubsection\hspace{.1em}|}{0.25em}{}[.]
\numberwithin{equation}{section}
\newtheorem{theoremcounter}{theoremcounter}[section]
\theoremstyle{plain}
\newtheorem{conjecture}[theoremcounter]{Conjecture}
\newtheorem{corollary}[theoremcounter]{Corollary}
\newtheorem{lemma}[theoremcounter]{Lemma}
\newtheorem{proposition}[theoremcounter]{Proposition}
\newtheorem{theorem}[theoremcounter]{Theorem}
\newtheorem*{maintheorem}{Theorem}
\newtheorem*{maincorollary}{Corollary}
\theoremstyle{definition}
\newtheorem{definition}[theoremcounter]{Definition}
\theoremstyle{remark}
\newtheorem{remark}[theoremcounter]{Remark}
\newtheorem*{mainremark}{Remark}
\let\cal\undefined
\newcommand{\tbf}{\bfseries}
\newcommand{\tit}{\itshape}
\newcommand{\nbd}{\nobreakdash-\hspace{0pt}}
\newcommand{\cal}{\ensuremath{\mathcal}}
\newcommand{\cL}{\ensuremath{\cal{L}}}
\newcommand{\rmF}{\ensuremath{\mathrm{F}}}
\newcommand{\rmJ}{\ensuremath{\mathrm{J}}}
\newcommand{\rmM}{\ensuremath{\mathrm{M}}}
\newcommand{\rmt}{\ensuremath{\mathrm{t}}}
\newcommand{\td}{\tilde}
\newcommand{\wtd}{\widetilde}
\newcommand{\wht}{\widehat}
\newcommand*{\longhookrightarrow}{\ensuremath{\lhook\joinrel\relbar\joinrel\rightarrow}}
\newcommand{\ra}{\ensuremath{\rightarrow}}
\newcommand{\lra}{\ensuremath{\longrightarrow}}
\newcommand{\lhra}{\ensuremath{\longhookrightarrow}}
\newcommand{\lmto}{\ensuremath{\longmapsto}}
\newcommand{\ZZ}{\ensuremath{\mathbb{Z}}}
\newcommand{\QQ}{\ensuremath{\mathbb{Q}}}
\newcommand{\RR}{\ensuremath{\mathbb{R}}}
\newcommand{\CC}{\ensuremath{\mathbb{C}}}
\renewcommand{\Im}{\ensuremath{\mathop{\mathfrak{Im}}}}
\renewcommand{\pmod}[1]{\ensuremath{\;(\mathrm{mod}\, #1)}}
\newcommand{\Hom}{\ensuremath{\mathop{\mathrm{Hom}}}}
\newcommand{\Mat}[1]{\ensuremath{\mathrm{Mat}_{#1}}}
\newcommand{\MatT}[1]{\ensuremath{\mathrm{Mat}^\rmT_{#1}}}
\newcommand{\GL}[1]{\ensuremath{\mathrm{GL}_{#1}}}
\newcommand{\Mp}[1]{\ensuremath{\mathrm{Mp}_{#1}}}
\newcommand{\Sp}[1]{\ensuremath{\mathrm{Sp}_{#1}}}
\newcommand{\Orth}[1]{\ensuremath{\mathrm{O}_{#1}}}
\newcommand{\T}{\ensuremath{\rmt}}
\newcommand{\rT}{\hspace{.1em}\ensuremath{{}^\T}}
\newcommand{\slashdiv}{\ensuremath{\mathop{/}}}
\newcommand{\HS}{\mathbb{H}}
\newcommand{\Aut}{\ensuremath{\mathrm{Aut}}}
\newcommand{\R}{\mathbb{R}}
\newcommand{\Q}{\mathbb{Q}}
\newcommand{\Z}{\mathbb{Z}}
\newcommand{\C}{\mathbb{C}}
\renewcommand{\H}{\mathbb{H}}
\newcommand{\zxz}[4]{\begin{pmatrix} #1 & #2 \\ #3 & #4 \end{pmatrix}}
\newcommand{\abcd}{\zxz{a}{b}{c}{d}}
\newcommand{\kzxz}[4]{\left(\begin{smallmatrix} #1 & #2 \\ #3 & #4\end{smallmatrix}\right) }
\newcommand{\kabcd}{\kzxz{a}{b}{c}{d}}
\renewcommand{\Im}{\operatorname{Im}}
\newcommand{\calO}{\mathcal{O}}
\newcommand{\calX}{\mathcal{X}}
\newcommand{\bs}{\backslash}
\newcommand{\CH}{\operatorname{CH}}
\renewcommand{\MatT}[1]{\ensuremath{\mathrm{Sym}_{#1}}}
\newcommand{\ord}{\ensuremath{\mathrm{ord}}}
\newcommand{\rot}{\ensuremath{\mathrm{rot}}}
\newcommand{\trace}{\ensuremath{\mathrm{trace}}}
\newcommand{\Pic}{\ensuremath{\mathrm{Pic}}}
\renewcommand{\div}{\ensuremath{\mathrm{div}}}
\newcommand{\cl}{\ensuremath{\mathrm{cl}}}
\title{Kudla's Modularity Conjecture and\\Formal Fourier-Jacobi Series}
\author{
Jan Hendrik Bruinier
and
Martin Westerholt-Raum%
{
\footnote{
The first author is partially supported by DFG grant BR-2163/4-1.  The second author was supported by the ETH Zurich Postdoctoral Fellowship Program and by the Marie Curie Actions for People COFUND Program while this paper was written.}}%
}
\newcommand{\headertitle}{Formal Fourier-Jacobi Series}
\newcommand{\headerauthors}{J.~H.~Bruinier, M.~Westerholt-Raum}
\begin{document}

\begin{plainfootnotes}
\maketitle
\end{plainfootnotes}

\thispagestyle{scrplain}

\begin{abstract}
\noindent
We prove modularity of formal series of Jacobi forms that satisfy a natural symmetry condition.  They are formal analogues of Fourier-Jacobi expansions of Siegel modular forms.  From our result and a theorem of Wei Zhang, we deduce Kudla's conjecture on the modularity of generating series of special cycles of arbitrary codimension and for all orthogonal Shimura varieties.

% KEYWORDS and MSC
\vspace{.25em}
%\noindent{\sffamily\tbf vector valued Siegel modular forms \,|\, slopes of Jacobi forms \,|\, formal theta expansions}\\
\noindent{\sffamily\tbf MSC 2010: Primary 11F46; Secondary 14C25}
\end{abstract}

\tableofcontents
\vspace{.5em}

%%%%%%%%%%%%%%%%%%%%%%%%%%%%%%%%%%%%%%%%%%%%%%%%%%
%%% INTRODUCTION

\Needspace*{4em}
\addcontentsline{toc}{section}{Introduction}
\lettrine[lines=2,nindent=.2em]{\tbf F}{ourier} Jacobi expansions
%of Siegel modular forms
are one of the major tools to study Siegel modular forms.  For example, they appeared prominently in the proof of the Saito-Ku\-ro\-ka\-wa conjecture~\cite{andrianov-1979,maass-1979a,maass-1979b,maass-1979c,zagier-1981}.  Also the work of Kohnen and Skoruppa on spinor $L$-functions~\cite{kohnen-skoruppa-1989,kohnen-krieg-sengupta-1995} features Fourier-Jacobi expansions.  We formalize the notion of Fourier-Jacobi expansions by combining two features of Siegel modular forms:  Fourier-Jacobi coefficients are Siegel-Jacobi forms and Fourier expansions of genus~$g$ Siegel modular forms have symmetries with respect to $\GL{g}(\ZZ)$.

For simplicity, we restrict this exposition to classical Siegel modular forms of even weight.  Suppose that $f$ is a Siegel modular form of even weight $k$ for the full modular group $\Sp{2g}(\Z)$. For $l$ with $1\leq l < g$, the Fourier-Jacobi expansion of $f$ is given by
\begin{align}
\label{eq:intro1}
  f(\tau)
=
  \sum_{0 \le m \in \MatT{l}(\Q)}\hspace{-.2em}
  \phi_m(\tau_1, z)\, e(m \tau_2)
\text{,}
\end{align}
where the variable $\tau$ in the genus $g$ Siegel upper half space $\H_g$ is written as
\[
\tau = \zxz{\tau_1}{z}{\rT z}{\tau_2}
\]
with $\tau_1 \in \HS_{g-l}$, $z \in \Mat{g - l,l}(\C)$, and $\tau_2 \in \HS_l$.
The summation in \eqref{eq:intro1} runs through half-integral symmetric positive semi-definite matrices $m$, and we write $e(x) = \exp(2 \pi i \cdot \trace(x))$.
This amounts to the partial Fourier expansion of $f$ with respect to the variable $\tau_2$.
We call $l$ the cogenus of the Fourier-Jacobi series.
%For the moment, we suppose that $l=1$, so that $m\in \Z$.

The weight $k$ transformation law of $f$ implies that the holomorphic functions $\phi_m(\tau_1, z)$ are Siegel-Jacobi forms of weight $k$ and index $m$, see Section \ref{sect:1.3} for details. Such Siegel-Jacobi forms have Fourier expansions
\begin{align}
\label{eq:intro2}
  \phi_m(\tau_1,z)
=
  \sum_{\substack{n\in \MatT{g-l}(\Q)\\
  r\in \Mat{g-l,l}(\Z)}}
  c(\phi_m;n,r) \, e(n \tau_1+\rT r z)
\text{,}
\end{align}
and the coefficients
%$c(f;t)$
of the usual Fourier expansion
\[
f(\tau)=\sum_{0 \le t\in \MatT{g}(\Q)}
  c(f;t)\, e(t \tau)
\]
of $f$ are given by
\begin{align}
\label{eq:intro3}
c\left( f;\, \zxz{n}{\frac{1}{2}r}{\frac{1}{2}\rT r}{m}\right) = c(\phi_m; n,r).
\end{align}
Note that the weight $k$ transformation law of $f$ implies that its Fourier coefficients satisfy the symmetry condition
\begin{align}
\label{eq:intro4}
c(f;\, t) = c(f;\, \rT u t u)
\end{align}
for all $u \in \GL{g}(\ZZ)$.

In the present paper we study formal analogues of Fourier-Jacobi expansions of Siegel modular forms.
Specifically, let $f$ be a \emph{formal} series of Siegel-Jacobi forms as in \eqref{eq:intro1}, given by a family of Siegel-Jacobi forms $\phi_m$ of weight $k$ and index $m$ for
half-integral positive semi-definite matrices $m\in \MatT{l}(\Q)$.  We stress that no convergence assumption is required of such a series.
For $t\in\MatT{g}(\Q)$, we define the formal Fourier coefficients $c(f,t)$ of $f$ by means of the coefficients $c(\phi_m; n,r)$ of the $\phi_m$ and identity \eqref{eq:intro3}.
We call $f$ a {\em symmetric} formal Fourier-Jacobi series (of weight $k$, genus $g$, and cogenus $l$) if its coefficients satisfy
\eqref{eq:intro4}
%$c(f;\, t) = c(f;\, \rT u t u)$
for all $u \in \GL{g}(\ZZ)$.
Denote the corresponding vector space by $\rmF\rmM^{(g,l)}_k$, and write $\rmM^{(g)}_k$ for the space of Siegel modular forms of genus $g$ and weight $k$.
%
%There is a linear map form the space of Siegel modular forms, $\rmM^{(g)}_k$, into %$\rmF\rmM^{(g,l)}_k$ given by the cogenus~$l$ Fourier-Jacobi expansion.
%Our main result states that every symmetric formal Fourier-Jacobi series converges automatically. In other words:
Our main result is as follows.

\begin{maintheorem}[Modularity of Symmetric Formal Fourier-Jacobi Series]
\label{thm:main-theorem-rigidity}
If %$g \ge 2$ and
$1\leq l<g$, the linear map
\begin{gather*}
  \rmM^{(g)}_k
\lra
  \rmF\rmM^{(g,l)}_k\,
\text{,}\quad
  f
\longmapsto
  \sum_{0 \le m \in \MatT{l}(\Q)} \phi_m(\tau_1, z)\, e( m \tau_2),
\end{gather*}
given by the cogenus~$l$ Fourier-Jacobi expansion,
is an isomorphism.
\end{maintheorem}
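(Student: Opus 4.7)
Injectivity of the map is immediate from \eqref{eq:intro3}: the Fourier coefficients of a Siegel modular form $f \in \rmM^{(g)}_k$ are determined by the Fourier coefficients of its Jacobi coefficients $\phi_m$, so $f$ vanishes whenever all $\phi_m$ do. The substantive content is surjectivity: given any symmetric formal FJ series $f = \sum_m \phi_m(\tau_1,z)\,e(m\tau_2)$, we must produce a genuine Siegel modular form whose FJ expansion is $f$.

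The heart of the argument is to establish convergence of the formal series to a holomorphic function $\tilde f$ on $\H_g$. The crucial observation is that the symmetry \eqref{eq:intro4} relates coefficients $c(f;t)$ and $c(f;\rT utu)$ for all $u \in \GL{g}(\Z)$; a generic $u$ changes the lower-right $l \times l$ block of $t$, so the symmetry ties together coefficients of distinct Jacobi forms $\phi_m$ and $\phi_{m'}$. Using reduction theory for positive semidefinite symmetric matrices, every half-integral $t \in \MatT{g}(\Q)_{\geq 0}$ lies in the $\GL{g}(\Z)$-orbit of a Minkowski-reduced matrix whose lower-right block has size controlled by $\det(t)$. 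Combining this with the theta decomposition of each Jacobi form $\phi_m$ into vector-valued elliptic modular forms of half-integral weight for a Weil representation attached to $m$, the $\GL{g}(\Z)$-symmetry translates into a coherence condition on these vector-valued forms. Together with standard bounds for Fourier coefficients of holomorphic modular forms, this should yield uniform growth bounds on all $c(\phi_m;n,r)$ across $m$, and hence absolute convergence of the series on compact subsets of $\H_g$.

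Once $\tilde f$ is well-defined, modularity follows by verifying invariance under a generating set for $\Sp{2g}(\Z)$. Invariance under translations $\tau \mapsto \tau + S$ with integer symmetric $S$ is built into the expansion, via the $\tau_2$-periodicity of the outer series and the Jacobi transformation laws of the individual $\phi_m$ in $(\tau_1,z)$. Invariance under the Levi $\GL{g}(\Z)$ of the Siegel parabolic is precisely the symmetry \eqref{eq:intro4}. Invariance under the Jacobi embedding of $\Sp{2(g-l)}(\Z) \ltimes H$ acting on $(\tau_1, z)$ follows from the Jacobi transformation law of each $\phi_m$. A standard presentation shows that these subgroups generate $\Sp{2g}(\Z)$, which combined with the holomorphicity and boundedness of $\tilde f$ toward the cusps completes the proof.

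The principal obstacle is the convergence step: distilling sufficiently sharp uniform bounds on all $c(\phi_m;n,r)$ from the $\GL{g}(\Z)$-symmetry is where the bulk of the technical work will lie. I expect the Weil-representation/theta-decomposition framework---which replaces individual Jacobi forms by vector-valued elliptic modular forms under known finite-dimensional representations---to be the decisive tool, since it converts the combinatorial coherence imposed by $\GL{g}(\Z)$ into analytic estimates amenable to reduction theory. The additional hypothesis $l < g$ enters here to ensure that enough Jacobi forms $\phi_{m'}$ of small index $m'$ (containing enough Fourier data) exist to pin down the whole family.
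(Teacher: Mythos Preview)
Your identification of injectivity as trivial and of convergence as the heart of surjectivity is correct, and your closing paragraph about modularity following from generators matches the paper exactly. But your proposed route to convergence is not the paper's, and as stated it has a genuine gap.

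The paper does \emph{not} attempt to extract uniform growth bounds on the $c(\phi_m;n,r)$ directly from the symmetry condition. Instead it argues indirectly, in three algebraic-geometric steps. First, slope bounds for Siegel--Jacobi forms (Proposition~\ref{prop:slope-bound-vector-valued}, Lemma~\ref{la:maximal-vanishing-relation-for-jacobi-forms}) together with Runge's finiteness theorem yield $\dim \rmF\rmM^{(g)}_k \ll_g k^{g(g+1)/2}$, so $\rmF\rmM^{(g)}_\bullet$ is a finite $\rmM^{(g)}_\bullet$-module and every $f\in\rmF\rmM^{(g)}_k$ satisfies a nontrivial polynomial relation $Q(f)=0$ with Siegel modular form coefficients. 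Second, viewing $f$ as an element of the completed local ring $\wht\calO_a$ at a boundary point of a toroidal compactification, the fact that $\calO_a$ is henselian and excellent forces the algebraic element $f$ to lie in $\calO_a$ itself; hence $f$ converges in an open neighborhood of the entire boundary. Third, the fact that $\Pic(\overline Y_g)\otimes\Q=\Q\cL$ guarantees that every prime divisor meets the boundary, so the discriminant locus of $Q$ meets the region of convergence, and a covering argument (Lemma~\ref{lem:coversplit}) extends $f$ holomorphically to all of $\H_g$. This settles cogenus $1$; higher cogenus and general type $\rho$ are then handled by induction via the formal $\Phi$-operator and formal theta decomposition (Lemmas~\ref{la:formal-siegel-phi-operator}, \ref{la:formal-theta-expansion}, \ref{la:rigidity-for-all-cogenera}, \ref{la:rigidity-for-all-types}).

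Your direct analytic strategy, by contrast, asks for uniform-in-$m$ estimates on coefficients of vector-valued forms for the varying Weil representations $\rho^{(g-l)}_m$, whose dimensions grow with $m$. The symmetry condition does relate coefficients across different $m$, but it is a combinatorial identity, not an inequality; turning it into the kind of exponential bound needed for absolute convergence on $\H_g$ is precisely the step you label ``this should yield uniform growth bounds,'' and you have not indicated a mechanism for it. Reduction theory tells you each $\GL g(\Z)$-orbit has a nice representative, but it does not by itself bound how large $|c(f;t)|$ can be relative to $t$. No argument along these lines is known to the authors for general $g$; the paper's algebraic-relation approach was devised exactly to circumvent this difficulty.
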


%In Theorem~\ref{thm:rigidity-general-case} on page~\pageref{thm:rigidity-general-case}, we allow for formal Fourier-Jacobi series whose coefficients are indexed by matrices of size $l \times l$ with $1 \le l < g$.  We call $l$ the cogenus of formal Fourier-Jacobi series.
The main assertion of the theorem is that every symmetric formal Fourier-Jacobi series converges automatically. Since the symplectic group $\Sp{2g}(\Z)$ is generated by the embedded Jacobi group and the discrete Levi factor $\GL{g}(\Z)$, the transformation law then follows immediately.
\begin{mainremark}
Our work also covers the more general case of symmetric formal Fourier-Jacobi series of half-integral weight for representations of the metaplectic double cover~$\Mp{2n}(\ZZ)$ of~$\Sp{2n}(\ZZ)$, see Theorem~\ref{thm:rigidity-general-case} on page~\pageref{thm:rigidity-general-case}.
The case of vector valued weights, i.e., representations of $\GL{g}(\CC)$ occurring in the factor of automorphy, can also be handled by our method.  See Section~\ref{ssec:extension-to-allvector-valued} for details.
\end{mainremark}

\begin{mainremark}
Formal Fourier-Jacobi expansions have first been studied by Aoki~\cite{aoki-2000} in the case of genus~$2$ Siegel modular forms for the full modular group.  Ibukiyama, Poor, and Yuen~\cite{ibukiyama-poor-yuen-2012} studied them in the case of genus~$2$ paramodular forms of level $1$ through~$4$.  The special case~$g=2$ was independently completed by both authors in separate work~\cite{raum-2013a,bruinier-2015}.
\end{mainremark}

Our main application is Kudla's modularity conjecture for Shimura varieties $X$ associated with orthogonal groups of signature $(n,2)$.
As a special case of earlier joint work with Millson, Kudla \cite{kudla-1997}
attached classes in $\CH^g(X)$ of special cycles $Z(t)$ of codimension~$g$ to positive semi-definite matrices~$t \in \MatT{g}(\QQ)$, and considered
their generating series
\begin{gather*}
  A_g(\tau)
=
  \sum_{t \in \MatT{g}(\QQ)} Z(t)\, q^t.
\end{gather*}
He observed that his results with Millson implied that
the analogous (but coarser) generating series
for the images $\cl_{\mathrm{hom}}Z(t)$ of the cycle classes in cohomology is a Siegel modular form of weight $1+n/2$ and genus $g$.
As a result, he asked~\cite{kudla-1997, kudla-2004} whether such a modularity property already held for the series $A_g$ at the level of Chow groups.

%As a special case of earlier joint work with Millson, Kudla \cite{kudla-1997}
%attached special cycles $Z(t)$ of codimension~$g$ to positive semi-definite matrices~$t \in \MatT{g}(\QQ)$.  He observed that his results with Millson implied that
%the generating series
%\begin{gather*}
%  A_g(\tau)
%=
%  \sum_{t \in \MatT{g}(\QQ)} \cl_{\bullet} Z(t)\, q^t
%\end{gather*}
%for the images $\cl_{\mathrm{hom}}Z(t)$ of the cycles in cohomology is a Siegel modular form of weight $1+n/2$ and genus $g$.
%He asked  \cite{kudla-1997, kudla-2004} whether the analogous modularity property already held for the series for the images $\cl_{\mathrm{Chow}}Z(t)$ of the cycles in Chow groups.

Based on his construction of meromorphic modular forms with explicit divisors, Borcherds showed \cite{borcherds-1999} that the $\CH^1(X)$ valued generating series of special divisors is an elliptic modular form.
Building on Borcherds' work, Zhang~\cite{zhang-2009} proved a partial modularity result for the higher codimension cycles (see also \cite{yuan-zhang-zhang-2009}).
He showed that for every  $m\in \MatT{g-1}(\Q)$,
the $m$-th Fourier-Jacobi coefficient of the $\CH^{g}(X)$ valued generating series $A_{g}$ can be written as a finite sum of push forwards of generating series for special divisors on embedded Shimura sub-varieties of codimension $g-1$.
Employing Borcherds' result, he could then deduce that
the $m$-th Fourier-Jacobi coefficient is a Jacobi form of index $m$.

In our notation, Zhang's result states that $A_g$ is a symmetric formal Fourier-Jacobi series of weight $1+n/2$, genus $g$, and cogenus $g-1$.
Combining it with our main theorem,
%\ref{thm:main-theorem-rigidity},
we obtain:
%In conjunction with Zhang's results, Kudla's modularity conjecture follows from our main theorem.
\begin{maincorollary}[Kudla's Modularity Conjecture]
Kudla's modularity conjecture for Shimura varieties of orthogonal type is true.
\end{maincorollary}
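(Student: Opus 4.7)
The proof combines Zhang's partial modularity theorem with the Main Theorem of the present paper, with only minor bookkeeping to account for Chow-group-valued coefficients. For $g = 1$ the result is Borcherds' theorem, so I would immediately reduce to $g \ge 2$ and work with cogenus $l = g - 1$, which satisfies $1 \le l < g$.

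First, I would record the input. As the paper has already observed, Zhang's theorem in~\cite{zhang-2009} amounts to the statement that $A_g$, viewed as a formal cogenus $g-1$ Fourier-Jacobi series of genus~$g$, has each Fourier-Jacobi coefficient equal to a Jacobi form of weight $1 + n/2$ and the appropriate index. The symmetry $c(A_g;\, \rT u t u) = c(A_g;\, t)$ for $u \in \GL{g}(\ZZ)$ is built into the definition of the special cycles: $Z(t)$ depends only on the $\GL{g}(\ZZ)$-equivalence class of the Gram matrix~$t$, because the underlying cycle is constructed from the $\ZZ$-lattice spanned by a tuple of vectors with Gram matrix $t$, which is invariant under basis change. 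Hence $A_g$ lies in the $\CH^g(X)$-valued analogue of $\rmF\rmM^{(g, g-1)}_{1 + n/2}$.

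Second, I would bootstrap Theorem~\ref{thm:main-theorem-rigidity} from scalar to coefficients in $V := \CH^g(X) \otimes \CC$. For every $\lambda \in V^*$, the image $\lambda \circ A_g$ is a scalar symmetric formal Fourier-Jacobi series, hence equals the Fourier-Jacobi expansion of some $F_\lambda \in \rmM^{(g)}_{1 + n/2}$. Fixing a basis $F_1, \ldots, F_d$ of $\rmM^{(g)}_{1 + n/2}$ together with matrices $t_1, \ldots, t_d$ such that the Fourier coefficient matrix $[c(F_i; t_j)]$ is invertible, one reads off from $F_\lambda = \sum_i \beta_i(\lambda)\, F_i$ that each $\beta_i$ has the form $\lambda \mapsto \lambda(v_i)$ for explicit elements $v_i \in V$ built from the finitely many coefficients $c(A_g; t_j)$ via the inverse of that matrix. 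Separation of points by $V^*$ then forces $A_g = \sum_i v_i F_i \in V \otimes_\CC \rmM^{(g)}_{1 + n/2}$, which exhibits $A_g$ as the Fourier expansion of a $\CH^g(X)$-valued Siegel modular form of weight $1 + n/2$ and genus $g$.

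The only step requiring any thought is this final reduction to the scalar case; it is entirely formal, relying solely on the finite dimensionality of $\rmM^{(g)}_{1 + n/2}$, and no further geometric input beyond Zhang's theorem is needed.
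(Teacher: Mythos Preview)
Your approach is essentially the paper's: invoke Zhang's result to identify $A_g$ as a symmetric formal Fourier-Jacobi series of cogenus $g-1$, then apply the modularity theorem. Your linear-functional bootstrap over $V = \CH^g(X)_\CC$ is in fact a cleaner justification of the step the paper writes tersely as ``$A_g \in \rmF\rmM \otimes_\CC \CH^g(X_\Gamma)_\CC$'', since the Chow group is not known to be finite-dimensional a priori.

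One genuine oversight: in the precise formulation (Section~\ref{ssec:kudla-conjecture}) the coefficients $Z(t)$ lie in $S_{L,g}^\vee \otimes \CH^g(X)_\CC$ and $A_g$ transforms under the dual Weil representation $\omega_{L,g}^\vee$, with weight $1+n/2$ possibly half-integral. So after applying $\lambda \in V^*$ you land in $\rmF\rmM^{(g,g-1)}_{1+n/2}(\omega_{L,g}^\vee)$, not in the scalar space, and you must invoke Theorem~\ref{thm:rigidity-general-case} (equivalently, the Remark following the Main Theorem) rather than the scalar Theorem~\ref{thm:main-theorem-rigidity}. With that correction your argument goes through unchanged.
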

\begin{mainremark}
A more detailed statement can be found in Section~\ref{ssec:kudla-conjecture}.
\end{mainremark}

One further appealing consequence of our work is an algorithm to compute Siegel modular forms, sketched in Section~\ref{ssec:compuation}.  The idea is to consider truncated symmetric Fourier-Jacobi series of cogenus~$1$, which we call symmetric Fourier-Jacobi polynomials.  In the simplest case of scalar valued Siegel modular forms we write $\rmF\rmM^{(g)}_{k, \le B}$ for the space of polynomials
\begin{gather*}
  \sum_{0 \le m < B}\hspace{-.2em}
  \phi_m(\tau_1, z)\, e(m \tau_2)
\text{.}
\end{gather*}
We call $B$ the (truncation) precision, and impose a symmetry condition similar to the previous one.  One can deduce effective lower bounds on~$B$ so that the natural projection $\rmF\rmM^{(g,1)}_k \lra \rmF\rmM^{(g)}_{k, \le B}$ is injective.  Moreover, our Main Theorem implies that this projection is onto, if~$B$ is large enough.  Effective results in this direction are not in sight.  However, if the dimension of $\rmM^{(g)}_k$ is known, we obtain an algorithm that computes Fourier expansions of Siegel modular forms in finite time.  Closed formulas for $\dim \rmM^{(g)}_k$ are known in very few cases, but an algorithm to compute $\dim \rmM^{(g)}_k$
%\todo{Actually examples up to genus~$7$ are even included as numerical data.}
has recently been provided in~\cite{taibi-2014}.  Its correctness depends on a certain assumption on $A$-packets, for which we refer the reader to the original paper.
\vspace{.5em}

The proof of our main theorem can be separated into three parts.    We consider the graded algebra $\rmF\rmM^{(g)}_\bullet =\rmF\rmM^{(g,1)}_\bullet$ of symmetric formal Fourier-Jacobi series of cogenus~$1$.  Clearly, it contains the graded ring of Siegel modular forms $\rmM^{(g)}_\bullet$.  Firstly, slope bounds for Siegel-Jacobi forms allow for dimension estimates of $\rmF\rmM^{(g)}_k$.  They, in particular, imply that $\rmF\rmM^{(g)}_\bullet$ is a finite rank module over~$\rmM^{(g)}_\bullet$.
 Consequently, any $f\in \rmF\rmM_k^{(g)}$ satisfies a non-trivial algebraic relation over $\rmM^{(g)}_\bullet$.
Now, viewing $f$ as an element of the completion $\hat {\mathcal{O}}_a$ of the local ring at boundary points $a$ of a regular toroidal com\-pacti\-fication of the Siegel orbifold, one can deduce that $f$ converges in a neighborhood of the boundary. Using the structure of the Picard group of the Siegel orbifold, it can be shown that $f$ has a holomorphic continuation to the whole Siegel half space and therefore converges everywhere.
%
%Secondly, we study elements of $\rmF\rmM^{(g)}_k$ at the boundary of the toroidal compactification of the Siegel orbifold.  Using general theory of analytic algebras, we deduce that they correspond to functions on some open neighborhood of the boundary.  We thus establish that $\rmM^{(g)}_\bullet$ is algebraically closed in $\rmF\rmM^{(g)}_\bullet$.  Combining both arguments yields modularity of symmetric formal Fourier-Jacobi series of cogenus~$1$.
%

To cover general symmetric formal Fourier-Jacobi series, we use induction on the cogenus and a certain pairing of formal Fourier-Jacobi expansions.  Two tools enter that will probably be of independent interest: In Lemma~\ref{la:formal-siegel-phi-operator} and~\ref{la:formal-theta-expansion}, we study formal versions of the Siegel~$\Phi$ operator and the theta expansion of Fourier-Jacobi coefficients.  Both are, as we show, compatible with our definition of symmetric formal Fourier-Jacobi series.
\vspace{.5em}

We start the paper with Section~\ref{sec:preliminaries} on preliminaries on Siegel modular forms, Jacobi forms, Fourier Jaocbi expansions, vanishing orders, and slope bounds.  Section~\ref{sec:formal-fourier-jacobi-expansions} contains the definition of symmetric formal Fourier-Jacobi Series, compatibility statements for the formal Siegel $\Phi$ operator and theta expansions of Fourier-Jacobi coefficients, and an asymptotic estimate of dimensions of~$\dim \rmF\rmM^{(g)}_k$.  In Section~\ref{sec:purely-transcendental-extension}, we prove that the algebra of Siegel modular forms is algebraically closed as a subalgebra of all symmetric formal Fourier-Jacobi series.  We establish modularity in Section~\ref{sec:rigidity-scalar-valued}.  Finally, we discuss some possible generalizations and applications in Section~\ref{sec:extensions-of-this-work}.  Kudla's modularity conjecture, in particular, is deduced in Section~\ref{ssec:kudla-conjecture}.

\vspace{.5em}
\noindent
{\tit Acknowledgement:}
We thank J.-B.~Bost, E.~Freitag, S.~Grushevsky, J.~Kramer, and S.~Kudla for their help.

\section{Preliminaries}
\label{sec:preliminaries}

\subsection{Siegel modular forms}

The Siegel upper half space of genus~$g$ is denoted by~$\HS_g$.  We typically write $\tau$ for an element in there.  The action of the symplectic group $\Sp{2g}(\RR) \subset \GL{2g}(\RR)$ on $\HS_g$ is given by
\begin{gather*}
  \begin{pmatrix}
  a & b \\ c & d
  \end{pmatrix}
  \tau
=
  (a \tau + b) (c \tau + d)^{-1}
\text{.}
\end{gather*}

Define the metaplectic double cover of $\Sp{2g}(\ZZ)$ as the set
\begin{gather*}
  \Mp{2g}(\ZZ)
=
  \Big\{
  (\gamma, \omega) \,:\,
  \gamma=\kabcd \in \Sp{2g}(\ZZ)
  \;\text{and
  $\omega: \HS_g \rightarrow \CC$ with $\omega(\tau)^2 = \det(c \tau + d)$}
  \Big\}
\end{gather*}
with multiplication
\begin{gather*}
  \big(\gamma_1, \omega_1\big)\, \big(\gamma_2, \omega_2\big)
=
  \big(\gamma_1 \gamma_2, (\omega_1 \circ \gamma_2) \cdot \omega_2\big)
\text{.}
\end{gather*}
The canonical projection $\Mp{2g}(\ZZ) \to \Sp{2g}(\ZZ)$ gives rise to an action on $\HS_g$.
Given $k \in \frac{1}{2}\ZZ$ and a finite dimensional representation~$\rho$ of $\Mp{2g}(\ZZ)$, we write $\rmM^{(g)}_k(\rho)$ for the space of Siegel modular forms of weight~$k$ and type~$\rho$, that is, the space of holomorphic functions $f:\HS_g\to V(\rho)$ satisfying
\begin{gather*}
  f\left( \gamma \tau \right)
=
  \omega(\tau)^{2k}
  \rho\left( \gamma,
            \omega\right)\,
  f(\tau)
\end{gather*}
for all $(\gamma, \omega) \in \Sp{2g}(\ZZ)$ (and being holomorphic at the cusp if $g=1$).  Throughout this work, we assume that $\rho$ factors through a finite quotient of $\Mp{2g}(\ZZ)$.  If $\rho$ is trivial, we suppress it from the notation.  The graded algebra of classical Siegel modular forms is denoted by~$\rmM^{(g)}_{\bullet}$.

For a finite index subgroup $\Gamma$ of $\Mp{2g}(\ZZ)$ define~$\rmM_k(\Gamma)$ as the space of holomorphic weight~$k$ Siegel modular forms for the subgroup~$\Gamma$ with trivial representation.
% That is, we require that
% \begin{gather*}
%   f\big( \left(\begin{smallmatrix} a & b \\ c & d \end{smallmatrix}\right) \big)
% =
%   \omega(c \tau + d)^{2k}
%   \rho\big( \left(\begin{smallmatrix} a & b \\ c & d \end{smallmatrix}\right),
%             \omega\big)\,
%   f(\tau)
% \end{gather*}
% holds for all $(\gamma, \omega) \in \Gamma$.

A Siegel modular form $f\in \rmM^{(g)}_k(\rho)$ has a Fourier expansion
\begin{gather*}
  f(\tau)
=
  \sum_{0 \le t \in \MatT{g}(\QQ)} c(f;\, t)\, e(t \tau)
\text{,}
\end{gather*}
with coefficients $c(f;\, t)$ in the representation space of $\rho$, where $0 \le t$ means that $t$ is positive semi-definite.  Here and throughout, we set $e(x) = \exp(2 \pi i \cdot \trace(x))$.

\subsection{Siegel-Jacobi forms}

Siegel-Jacobi forms are functions  on the Jacobi upper half space
\begin{gather*}
  \HS_{g,l}
=
  \HS_g \times \Mat{g, l}(\CC)
\text{.}
\end{gather*}
Given $0 < l \in \ZZ$, define the metaplectic cover of the full Jacobi group
\begin{gather*}
  {\wtd \Gamma}^{(g,l)}
=
  \Mp{2g}(\ZZ) \ltimes \big(\Mat{g, l}(\ZZ) \times \Mat{g, l}(\ZZ)\big)
\text{.}
\end{gather*}
It acts on $\HS_{g,l}$ via
\begin{gather*}
  \left(
  \begin{pmatrix}
  a & b \\ c & d
  \end{pmatrix},\,
  \omega,\,
  \lambda, \mu
  \right)
  (\tau, z)
=
  \big( (a \tau + b) (c \tau + d)^{-1}, (z + \tau \lambda + \mu) (c \tau + c)^{-1} \big)
\text{,}
\end{gather*}
where $\lambda, \mu \in \Mat{g,l}(\ZZ)$, and $\left(\begin{smallmatrix} a & b \\ c & d \end{smallmatrix}\right)$ and $\omega$ are as in the previous section.

In order to define Siegel-Jacobi forms in a convenient way, we make use of the embedding
\begin{align}
\label{eq:embedding-of-jacobi-group}
%  \iota_{\Mp{}}^{(g,l)}:\,
  {\wtd \Gamma}^{(g,l)}
&\lhra
  \Mp{2(g+l)}(\ZZ)
\\\nonumber
  \left(
  \begin{pmatrix}
  a & b \\ c & d
  \end{pmatrix},\,
  \omega,\,
  \lambda, \mu
  \right)
&\lmto
  \left(
  \begin{pmatrix}
    a & 0^{(g, l)} & b & a \mu - b \lambda \\
    \rT \lambda & 1^{(l)} & \rT \mu & 0 \\
    c & 0 & d & c \mu - d \lambda \\
    0 & 0 & 0 & 1^{(l)}
  \end{pmatrix},\;
  \tau \mapsto \omega( \tau )
  \right)
\text{.}
\end{align}
Fix $k \in \frac{1}{2}\ZZ$, $m \in \MatT{l}(\frac{1}{2}\ZZ)$ with integral diagonal entries, and a finite dimensional representation~$\rho$ of ${\wtd \Gamma}^{(g,l)}$.  We say that a holomorphic function $\phi:\HS_{g,l} \to V(\rho)$ is a (Siegel-) Jacobi form of weight~$k$, index~$m$, and type~$\rho$ if
\begin{gather*}
  \phi(\tau, z) \cdot e(m \tau')
\text{,}\quad
  \begin{pmatrix}
  \tau & z \\ \rT z & \tau'
  \end{pmatrix}
  \in
  \HS_{g + l}
\end{gather*}
transforms like a genus $g + l$ Siegel modular form of weight $k$ and type $\rho$ under the image of \eqref{eq:embedding-of-jacobi-group}
%$\iota_{\Mp{}}^{(g, l)}$
(and $\phi(\tau, \alpha \tau + \beta)$ being holomorphic at the cusp for all $\alpha, \beta \in \QQ^l$ if $g=1$).  The space of Jacobi forms of genus~$g$, weight~$k$, type~$\rho$, and index~$m \in \MatT{l}(\QQ)$ will be denoted by~$\rmJ^{(g)}_{k, m}(\rho)$.  We use notation analogous to the case of Siegel modular forms.  A Jacobi form $\phi$ has a Fourier expansion of the form
\begin{gather}
\label{eq:jacobifourier}
  \phi(\tau,z)
=
  \sum_{\substack{n \in \MatT{g}(\QQ)\\ r \in \Mat{g, l}(\QQ)}}
  c(\phi;\, n, r)\, e(t \tau + r\rT z)
\text{.}
\end{gather}

% Theta decomposition

If $m\in \MatT{l}(\Q)$ is a positive definite half-integral matrix, we define the associated vector valued genus $g$ theta series as follows. For $\mu \in D_g(m):=\Mat{g,l}(\ZZ) (2 m)^{-1} \slashdiv \Mat{g,l}(\ZZ)$ we let
\begin{align}
\label{eq:defthetamu}
\theta^{(g)}_{m, \mu}(\tau, z) = \sum_{x\in \mu+\Mat{g,l}(\Z)} e( x m \rT x \tau)e(2xm \rT z).
\end{align}
\begin{comment}
Define the Siegel theta series of genus $g$ and index~$m$ for $\mu \in \frac{1}{2m}\ZZ^g \slashdiv \ZZ^g$ as follows:
\begin{gather*}
  \theta_{m, \mu} (\tau, z)
=
  \sum_{\lambda \in \ZZ^g + \mu}
  \exp\Big( 2 \pi i m \big(
  \lambda \rT \lambda\, \tau
  +
  2 \rT \lambda z
  \big) \Big)
\text{.}
\end{gather*}
\end{comment}
It is a standard result that $(\theta^{(g)}_{m, \mu})_\mu$ transforms like a vector valued Siegel modular form of weight~$\frac{l}{2}$ and type $(\rho^{(g)}_m)^\vee$, where $\rho^{(g)}_m$ is the Weil representation of $\Mp{2g}(\ZZ)$ on $\CC[D_g(m)]$, see for instance pp.~168 of~\cite{runge-1995}.  The transformation law under the Jacobi group implies that any  $\phi\in   \rmJ^{(g)}_{k, m}$ can be uniquely written as a sum
 $\phi = \sum_{\mu} h_\mu(\tau)\, \theta^{(g)}_{m, \mu}(\tau, z)$, where the functions $h_\mu(\tau)$ are the components of a vector valued Siegel modular form with representation $\rho^{(g)}_m$, see \cite{ziegler-1989}. We obtain a map,
called theta decomposition of Jacobi forms (see e.g.~\cite{ziegler-1989}, Section~3),
\begin{align}
\label{eq:theta-decomposition}
  \rmJ^{(g)}_{k, m}
&\lra
  \rmM^{(g)}_{k - \frac{l}{2}}(\rho^{(g)}_m),
\\\nonumber
  \phi
%= \sum_{\mu} h_\mu(\tau)\, \theta_{m, \mu}(\tau, z)
&\longmapsto
  \big( h_\mu \big)
  _{\mu \in D_g(m)}
\text{.}
\end{align}

\subsection{Fourier-Jacobi expansions}
\label{sect:1.3}

Fix $0 \le l \le g$ and write
\begin{align}
\label{eq:tausplit}
  \tau
=
  \begin{pmatrix}
  \tau_1 & z \\
  \rT z & \tau_2
  \end{pmatrix}
\text{,}
\end{align}
where $\tau_1$ is a $(g-l) \times (g-l)$ matrix, $z$ has size $(g-l) \times l$ and $\tau_2$ has size $l\times l$.  Siegel modular forms of weight $k$  allow for a Fourier-Jacobi expansion
\begin{gather*}
  f(\tau)
=
  \sum_{m\in \MatT{l}(\Q)} \phi_m(\tau_1, z)\, e(m \tau_2)
\text{,}
\end{gather*}
where $m$ runs through symmetric positive semi-definite  matrices of size~$l \times l$ and $\phi_m \in \rmJ^{(g - l)}_{k, m}$.  We say this is the Fourier-Jacobi expansion of cogenus~$l$, and call $\phi_m$ the Fourier-Jacobi coefficient of index~$m$.

\subsection{Vanishing orders}

We say that a symmetric matrix $t \in \MatT{g}(\QQ)$ represents $m \in \QQ$, if there is $v \in \ZZ^g$ with $\rT v t v = m$.  For a non-zero Siegel modular form~$f\in \rmM^{(g)}_k(\rho)$ we define the vanishing order by
\begin{gather*}
  \ord\,f
=
  \inf \big\{
  m \in \QQ \,:\,
  \text{$\exists t \in \MatT{g}(\QQ)$
  such that
  $c(f;\, t) \ne 0$ and $t$ represents $m$}
  \big\}
\text{.}
\end{gather*}
In addition, we use the convention that $\ord\, f = \infty$ if $f = 0$.  The order is additive with respect to the tensor product of Siegel modular forms, $\ord\, f\otimes g = \ord\, f + \ord\, g$.

An analogous definition can be made for  Siegel-Jacobi forms. If $\phi\in \rmJ^{(g)}_{k,m}(\rho)$ is non-zero, we put
\begin{gather*}
  \ord\,\phi
=
  \inf \big\{
  m \in \QQ \,:\,
  \text{
  $\exists t \in \MatT{g}(\QQ)$,
  $r \in \Mat{g,l}(\QQ)$
  such that
  $c(\phi;\, t, r) \ne 0$
  and
  $t$ represents $m$}
  \big\}
\text{.}
\end{gather*}

Spaces of modular forms with vanishing order greater than or equal to~$o \in \QQ$ are marked by square brackets, $\rmM_k^{(g)}(\rho)[o]$ and $\rmJ_{k, m}^{(g)}(\rho)[o]$.  Given a Siegel modular form $f \in \rmM_k^{(g)}(\rho)[o]$ of vanishing order~$o$, then its Fourier-Jacobi coefficients $\phi_m$ are zero if~$m < o$.

\subsection{Slope bounds for Siegel modular forms}

%Recall from Section~\ref{sec:preliminaries} that the vanishing order of Jacobi forms is defined by their Fourier-Jacobi expansions.
Here we recall some known results on slope bounds for Siegel modular forms associated to the full modular group.  From this we deduce slope bounds for vector valued Siegel modular forms.

\begin{definition}
\label{def:slope-of-siegel-modular-forms}
We define the slope of a non-zero Siegel modular form~$f$ of weight~$k$ by
\begin{gather*}
  \varrho(f)
=
  \frac{k}{\ord\,f}
\text{.}
\end{gather*}
The minimal slope bound for scalar valued genus~$g$ Siegel modular forms is written as
\begin{gather}
  \varrho_g
=
  \inf_{f \in \rmM^{(g)}_{\bullet} \setminus \{0\}} \varrho(f)
\text{,}
\end{gather}
\end{definition}

As a corollary to work of Eichler and Blichfeld, we find a lower bound on $\rho_g$ for all $g$.
\begin{theorem}
\label{thm:slope-estimate-for-g-ge-9}
We have
\begin{gather*}
  \varrho_g
>
  \frac{\sqrt{3} \pi^3}{2} \Gamma(2 + \tfrac{n}{2})^{-\frac{4}{n}}
\text{.}
\end{gather*}
\end{theorem}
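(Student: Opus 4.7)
The plan is to combine two classical inequalities: an upper bound due to Eichler on the determinant of a Fourier coefficient of a Siegel modular form, and Blichfeldt's upper bound for Hermite's constant.

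For the first ingredient, consider a non-zero $f \in \rmM^{(g)}_k$ and any positive definite half-integral symmetric $t$ with $c(f;t) \ne 0$. Eichler's argument exploits the $\Sp{2g}(\ZZ)$-invariance of the function $\tau \mapsto |f(\tau)|\det(\Im\tau)^{k/2}$: its boundedness on the Siegel fundamental domain $\mathcal{F}_g$, combined with an evaluation at the point $\Im\tau = \tfrac{k}{4\pi}\, t^{-1}$ where the single Fourier term $c(f;t)\, e(t\tau)$ attains its maximum, yields an explicit upper bound on $\det(t)$ in terms of $k$ and $g$. The factor $\sqrt{3}/2$ that appears in the statement ultimately stems from the classical determination $\min_{\tau \in \mathcal{F}_g} \det(\Im \tau)^{1/g} = \sqrt{3}/2$ on the Siegel fundamental domain, due to Hermite and Minkowski.

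For the second ingredient, Blichfeldt's inequality on Hermite's constant provides, for any positive definite real symmetric $g \times g$ matrix $t$,
\[
    \min_{v \in \ZZ^g \setminus \{0\}} \rT v\, t\, v
\;\le\;
    \frac{2}{\pi}\, \Gamma\!\big(\tfrac{g}{2}+2\big)^{\!2/g}\, \det(t)^{1/g}.
\]

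To conclude, choose $t$ realizing the vanishing order of $f$, so that $\ord f = \min_{v} \rT v\, t\, v$. Substituting Eichler's bound on $\det(t)^{1/g}$ into Blichfeldt's inequality yields a bound $\ord f \le C_g \cdot k$ with an explicit $C_g$ depending only on $g$, whence $\varrho(f) = k/\ord f \ge 1/C_g$, and the infimum over $f$ gives the claimed lower bound on $\varrho_g$. The main obstacle is not conceptual but purely a matter of bookkeeping: constants must be tracked carefully through both inequalities to verify that $1/C_g$ equals $\tfrac{\sqrt{3}\pi^3}{2}\, \Gamma(2 + g/2)^{-4/g}$, recovering the precise form stated in the theorem.
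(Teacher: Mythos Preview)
Your approach is essentially the same as the paper's: combine Eichler's slope bound with Blichfeldt's upper bound for Hermite's constant $\gamma_g$. The paper is even terser than your sketch---it simply quotes Eichler's result already in the form $\varrho_g > 2\sqrt{3}\,\pi\,\gamma_g^{-2}$ and Blichfeldt's inequality $\gamma_g \le \tfrac{2}{\pi}\,\Gamma(2+\tfrac{g}{2})^{2/g}$, and substitutes. One small caution: in the paper's formulation Hermite's constant already appears (with exponent~$2$) inside Eichler's bound, so the split you describe---first an upper bound on $\det(t)$ from Eichler, then Blichfeldt applied once to pass to $\min_v \rT v t v$---does not match the bookkeeping; in particular an upper bound on $\det(t)$ for \emph{every} $t$ with $c(f;t)\ne 0$ cannot hold (think of theta series). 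This does not affect the correctness of the overall strategy, only the allocation of constants between the two cited inequalities.
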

\begin{proof}
In~\cite{eichler-1975}, Eichler found that
\mbox{$
  \varrho_g
>
  2 \sqrt{3} \pi\, \gamma_g^{-2}
$},
and Blichfeld established in his work~\cite{blichfeldt-1929} that
\mbox{$
  \gamma_g
\le
  \frac{2}{\pi} \Gamma(2 + \tfrac{n}{2})^{\frac{2}{n}}
$}.
\end{proof}

\begin{remark}
For $g \le 5$, we know $\varrho_g$ exactly by unpublished work of Weissauer, by Salvati Manni~\cite{salvati-manni-1992}, and by Farkas, Grushevsky, Salvati Manni, Verra~\cite{farkas-grushevsky-salvati-manni-verra-2014}.  We have
\begin{gather*}
  \varrho_1 = 12
\text{,}\quad
  \varrho_2 = 10
\text{,}\quad
  \varrho_3 = 9
\text{,}\quad
  \varrho_4 = 8
\text{,}\quad
\text{and}\quad
  \varrho_5 = \frac{54}{7}
\text{.}
\end{gather*}
\end{remark}

% \begin{theorem}[Eichler, Blichfeldt]
% We combine estimates by Eichler~\cite{eichler-1975} and Blichfeldt~\cite{blichfeldt-1929} in order to bound $\varrho_g$ in terms of~$g$ alone.
% \begin{theorem}[Eichler]
% \label{thm:slope-bound-eichler}
% We have
% \begin{gather*}
%   \varrho_g
% >
%   2 \sqrt{3} \pi\, \gamma_g^{-2}
% \text{,}
% \end{gather*}
% where $\gamma_g$ is the Hermite constant.
% \end{theorem}
% \begin{theorem}[Blichfeldt]
% We have
% \begin{gather*}
%   \gamma_g
% \le
%   \frac{2}{\pi} \Gamma(2 + \tfrac{n}{2})^{\frac{2}{n}}
% \text{.}
% \end{gather*}
% \end{theorem}
% \begin{corollary}
% \label{cor:slope-estimate-for-g-ge-9}
% We have
% \begin{gather*}
%   \varrho_g
% >
%   \frac{\sqrt{3} \pi^3}{2} \Gamma(2 + \tfrac{n}{2})^{-\frac{4}{n}}
% \text{.}
% \end{gather*}
% \end{corollary}

\begin{proposition}
\label{prop:slope-bound-vector-valued}
Assume that $m > k/\varrho_g$.
For any representation~$\rho$ of~$\Mp{2g}(\ZZ)$ factoring through a finite quotient, we have
\begin{gather*}
\rmM^{(g)}_{k}(\rho)[m]
= \{ 0\}
\text{.}
\end{gather*}
\end{proposition}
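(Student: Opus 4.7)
The plan is to reduce to the scalar-valued case, where the slope bound $\varrho_g$ applies by definition. Suppose for contradiction that $0 \neq f \in \rmM^{(g)}_k(\rho)[m]$, and let $\Gamma \trianglelefteq \Mp{2g}(\ZZ)$ be the kernel of $\rho$, a finite index normal subgroup with finite quotient $G$ of order $N$. The components $f_1, \ldots, f_d$ of $f$ with respect to any basis of the representation space lie in $\rmM_k(\Gamma)$, and the transformation law of $f$ becomes $f_i|_k \gamma = \sum_j \rho(\gamma)_{ij} f_j$. Consequently the linear span $V = \lspan(f_1, \ldots, f_d) \subseteq \rmM_k(\Gamma)$ is stable under the slash action of $\Mp{2g}(\ZZ)$, the action factoring through $G$. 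Crucially, since the Fourier coefficients at infinity of any $h \in V$ are linear combinations of those of the $f_i$, every non-zero element of $V$ has vanishing order $\geq m$ at the cusp at infinity.

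I would then pick any non-zero $h \in V$ and form the norm
\[
  N(h) \;:=\; \prod_{[\gamma] \in \Mp{2g}(\ZZ)/\Gamma} h|_k \gamma\text{,}
\]
which is a well-defined scalar-valued Siegel modular form of weight $kN$ for $\Mp{2g}(\ZZ)$: normality of $\Gamma$ together with the $\Gamma$-invariance of $h$ makes each factor depend only on the coset of $\gamma$, and right translation by any $\delta \in \Mp{2g}(\ZZ)$ merely permutes the cosets. Each factor $h|_k \gamma$ lies in $V$ by stability and is non-zero by invertibility of $\gamma$ acting on $V$; hence $N(h) \in \rmM^{(g)}_{kN}$ is non-zero, and by additivity of the vanishing order under multiplication satisfies $\ord N(h) \geq mN$. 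Its slope is therefore at most $kN/(mN) = k/m < \varrho_g$, contradicting the definition of $\varrho_g$ as the infimum of slopes of non-zero scalar-valued Siegel modular forms.

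The step that requires the most care is controlling $\ord(h|_k \gamma)$ at the cusp at infinity, even though $\gamma$ typically moves that cusp within $\Gamma$. This is exactly what the $\Mp{2g}(\ZZ)$-stable subspace $V$ generated by the components of $f$ buys us: the Fourier expansion at infinity of any element of $V$ is a linear combination of the Fourier expansions at infinity of the $f_i$, and these are uniformly controlled by the hypothesis $f \in \rmM^{(g)}_k(\rho)[m]$. Should $\rmM^{(g)}_{kN}$ vanish for half-integer $k$ on central-character grounds, one passes to $N(h)^2$ or $N(h)^4$, which has integer (and, if needed, even) weight and the same slope, so the contradiction persists.
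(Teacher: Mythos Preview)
Your proof is correct and follows essentially the same approach as the paper's: reduce to the scalar case by taking a norm over cosets of $\ker(\rho)$. The paper phrases it via linear forms $v\in V(\rho)^\vee$ and the product $f_v=\prod_{\gamma}\langle v,f\rangle|_k\gamma$, observing that each factor equals $\langle v_\gamma,f\rangle$ and hence inherits the vanishing order of $f$; your span $V$ of the components is exactly the set of such $\langle v,f\rangle$, so the two arguments coincide. Your version is if anything slightly more explicit about why the norm is non-zero and about the half-integer weight caveat.
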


\begin{proof}
If $\rho$ is the trivial one-dimensional representation, the assertion is an immediate consequence of the definition of $\varrho_g$.

To prove the assertion for general $\rho$, define $d = \big[ \Mp{2g}(\ZZ) \,:\, \ker(\rho)\big]$ as the index of the kernel of $\rho$.  Let $v \in V(\rho)^\vee$ be a linear form on the representation space of~$\rho$.  For any function $f :\, \HS_g \ra V(\rho)$ we denote by $\langle v, f \rangle$ the function $\tau \mapsto v(f(\tau))$.

Let $f \in \rmM^{(g)}_k(\rho)[m]$, and set
\begin{gather*}
  f_v(\tau)
=
  \prod_{\gamma \in \ker(\rho) \backslash \Mp{2g}(\ZZ)}
  \hspace{-1em}
  \langle v, f \rangle \big|_k \gamma
\text{.}
\end{gather*}
Since $\langle v, f \rangle$ is a scalar valued Siegel modular form of weight $k$ for the group $\ker(\rho)$,
 the function $f_v(\tau)$ is a scalar valued Siegel modular form of weight $dk$ for $\Mp{2g}(\Z)$.
Since $\langle v, f \rangle \big|_k \gamma$ can be expressed as $\langle v_\gamma, f \rangle$ for a suitable $v_\gamma \in V(\rho)^\vee$, we find that $f_v \in \rmM^{(g)}_{d k}[d m]$. Hence
the assertion in the scalar case implies that $f_v = 0$
and $\langle v, f \rangle = 0$ for all $v \in V(\rho)^\vee$.
\end{proof}

%%%%%%%%%%%%%%%%%%%%%%%%%%%%%%%%%%%%%%%%%%%%%%%%%%%%%%%%%%%%%

\section{Symmetric Formal Fourier-Jacobi Series}
\label{sec:formal-fourier-jacobi-expansions}

In this section, we define formal expansions whose coefficients are Jacobi forms and which satisfy symmetry conditions inspired by the action of $\GL{g}(\ZZ) \subset \Sp{2g}(\ZZ)$ on Siegel modular forms.  We will often decompose the variable $\tau \in \HS_g$ into three parts as in \eqref{eq:tausplit}.

Let $\rho$ be a representation of $\Mp{2g}(\Z)$ as before. For every $0\leq l<g$ it induces a representation of the Jacobi group by restriction via the embedding
%$\iota_{\Mp{}}^{(g,l)}$
defined in~\eqref{eq:embedding-of-jacobi-group}, which we also denote by $\rho$.

Given a formal series of Jacobi forms
\begin{gather*}
  f(\tau)
=
  \sum_{0 \le m \in \MatT{l}(\QQ)}
  \phi_m(\tau_1, z)\,
  e\big( m \tau_2 \big)
\end{gather*}
with $\phi_m \in \rmJ^{(g - l)}_{k, m}(\rho)$ for all $m$, define its Fourier coefficients as
\begin{gather*}
  c(f;\, t)
=
  c(\phi_m;\, n, r)
\text{,}
\qquad
  t
=
  \begin{pmatrix}
  n & \tfrac{1}{2}r \\
  \tfrac{1}{2}\rT r & m
  \end{pmatrix}
\text{,}
\end{gather*}
where the coefficients $c(\phi_m; \,n,r)$ are given by \eqref{eq:jacobifourier}.

\begin{definition}
\label{def:formal-fourier-jacobi-expansions}
Fix $k \in \frac{1}{2}\ZZ$ and a representation $\rho$ of $\Mp{2g}(\ZZ)$.  For an integer $0 \le l < g$, let
\begin{gather*}
  f(\tau)
=
  \sum_{0 \le m \in \MatT{l}(\QQ)}
  \phi_m(\tau_1, z)\,
  e\big( m \tau_2 \big)
\end{gather*}
be a formal series of Jacobi forms $\phi_m\in \rmJ^{(g-l)}_{k,m}(\rho)$. Given $u \in \GL{g}(\ZZ)$ and a choice $\omega$ of a square root of $\det(u) \in \{\pm 1\}$, set $\rot(u) = \left(\begin{smallmatrix} u & 0 \\ 0 & \rT u^{-1} \end{smallmatrix}\right)$.  If
\begin{gather*}
  c(f;\, t)
=
  \omega^{2k} \rho\big( \rot(u), \omega \big)\, c(f;\, \rT u t u)
\end{gather*}
holds for all $0 \le t \in \MatT{g}(\QQ)$ and all $u \in \GL{g}(\ZZ)$, then we call $f$ a {\em symmetric} formal Fourier-Jacobi series of genus~$g$, cogenus~$l$, weight~$k$, and type~$\rho$.  The $\phi_m$ are called the canonical Fourier-Jacobi coefficients of~$f$.
\end{definition}
The notion of vanishing orders extends to symmetric formal Fourier-Jacobi series in a straightforward way.

We write $\rmF\rmM^{(g,l)}_k(\rho)$ for the vector space of such symmetric formal Fourier-Jacobi series.
%of genus~$g$, cogenus~$l$, weight~$k$, and type~$\rho$.
If $l = 1$, we abbreviate this by $\rmF\rmM^{(g)}_k(\rho)$.  Further, set
\begin{gather*}
  \rmF\rmM^{(g)}_\bullet(\rho)
=
  \bigoplus_k \rmF\rmM^{(g)}_k(\rho)
\text{.}
\end{gather*}
If $\rho_0$ is the trivial representation on $\CC$, we briefly write $\rmF\rmM^{(g)}_\bullet =\rmF\rmM^{(g)}_\bullet(\rho_0)$.

\begin{proposition}
\label{prop:module-and-algebra-structure-for-formal-fourier-jacobi-expansions}
Scalar valued symmetric formal Fourier-Jacobi series~$\rmF\rmM^{(g)}_{\bullet}$ carry an algebra structure over the graded ring of classical modular forms $\rmM^{(g)}_{\bullet}$.  Symmetric formal Fourier-Jacobi series~$\rmF\rmM^{(g)}_\bullet(\rho)$ are a module over~$\rmM^{(g)}_{\bullet}$.
\end{proposition}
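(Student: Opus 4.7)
The plan is to define multiplication by convolution of Fourier coefficients,
\[
  c(fg;\, t)
:=
  \sum_{\substack{0 \le t_1, t_2 \in \MatT{g}(\QQ) \\ t_1 + t_2 = t}}
  c(f;\, t_1)\, c(g;\, t_2)
\text{,}
\]
and then to verify three things in turn: (i) this sum is finite, (ii) the resulting series has canonical Fourier-Jacobi coefficients that are Jacobi forms of the correct weight, index, and type, and (iii) the $\GL{g}(\ZZ)$-symmetry of Definition~\ref{def:formal-fourier-jacobi-expansions} is preserved. Specializing to a scalar-valued pair then yields the algebra structure on $\rmF\rmM^{(g)}_{\bullet}$, while taking the first factor to be a classical Siegel modular form yields the $\rmM^{(g)}_{\bullet}$-module structure on $\rmF\rmM^{(g)}_{\bullet}(\rho)$.

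For (i) I would exploit the standing assumption that every representation in play factors through a finite quotient of~$\Mp{2g}(\ZZ)$: this forces all nonzero Fourier coefficients of elements of $\rmF\rmM^{(g)}_{\bullet}(\rho)$ or $\rmM^{(g)}_{\bullet}$ to be supported in a lattice of the shape $\tfrac{1}{N}\MatT{g}(\ZZ)$ for some positive integer~$N$. Positivity of $t_1 \le t$ then gives the diagonal bounds $(t_1)_{ii} \le t_{ii}$ and hence the off-diagonal bounds $|(t_1)_{ij}|^2 \le t_{ii}\, t_{jj}$, so only finitely many $t_1$ in the lattice can occur.

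For (ii), writing $f = \sum_{m_1 \ge 0} \phi_{m_1}(\tau_1,z)\, e(m_1 \tau_2)$ and $g = \sum_{m_2 \ge 0} \psi_{m_2}(\tau_1,z)\, e(m_2 \tau_2)$, the coefficient convolution is equivalent to declaring the cogenus~$1$ Fourier-Jacobi coefficient of $fg$ at index~$m$ to be
\[
  \sum_{m_1 + m_2 = m} \phi_{m_1}\, \psi_{m_2}
\text{,}
\]
which is again a finite sum by the same lattice argument. A direct computation via the embedding~\eqref{eq:embedding-of-jacobi-group} into the genus~$g+1$ symplectic group shows that a product of Jacobi forms of weights~$k_1, k_2$ and indices~$m_1, m_2$ is a Jacobi form of weight~$k_1+k_2$ and index~$m_1+m_2$, with type $\rho_1 \otimes \rho_2$; summing finitely many such terms of common index keeps us in $\rmJ^{(g-1)}_{k_1+k_2,m}(\rho_1 \otimes \rho_2)$.

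Finally, for (iii), the symmetry under $u \in \GL{g}(\ZZ)$ follows from the change of variables $t_i \mto \rT u t_i u$, which is a bijection on decompositions of $\rT u t u$; applying the symmetry conditions for $f$ and $g$ termwise produces the twisting factor for $fg$, namely the tensor product of those for $f$ and $g$. The main, and essentially only, obstacle is the finiteness claim in~(i); once it is secured, everything else is bookkeeping.
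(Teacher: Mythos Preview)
Your proposal is correct and is exactly the expanded version of what the paper has in mind: the paper's own proof is the single sentence ``This amounts to a straightforward verification of the symmetry condition of Definition~\ref{def:formal-fourier-jacobi-expansions},'' and your steps (i)--(iii) are precisely that verification spelled out. In particular, your finiteness argument in (i) via the bounded-denominator lattice and positive-semidefiniteness, and your check in (iii) via the bijection $t_i \mapsto \rT u t_i u$ on decompositions, are the intended content behind the word ``straightforward.''
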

\begin{proof}
This amounts to a straightforward verification of the symmetry condition of Definition~\ref{def:formal-fourier-jacobi-expansions}.
\end{proof}

\subsection{The Siegel $\Phi$ operator and Fourier-Jacobi coefficients}
\label{ssec:siegel-phi-and-fourier-jacobi}

We describe two results, which allow us to reduce considerations of cogenus $l$ to lower cogenus.  Fix $0 < l' < l$.  Recall the decomposition of $\tau = \left(\begin{smallmatrix} \tau_1 & z \\ \rT z & \tau_2 \end{smallmatrix}\right) \in \HS_g$, where $\tau_2$ has size $l'\times l'$.  We refine this decomposition as follows:
\begin{gather*}
  \tau
=
  \begin{pmatrix}
  \tau_{11} & z_{11} & z_{12} \\
  \rT z_{11} & \tau_{12} & z_{22} \\
  \rT z_{12} & \rT z_{22} & \tau_{2}
  \end{pmatrix}
\text{.}
\end{gather*}
Here $\tau_{11}$ has size $(g-l)\times (g-l)$ and $\tau_{12}$ has size $(l-l')\times (l-l')$.  The off diagonal matrices $z_{11}$, $z_{12}$, and $z_{22}$ have size $(g - l) \times (l-l')$, $(g - l) \times l'$, and $(l-l') \times l'$, respectively.  In addition, write $z_1$ for the $(g - l) \times l$ matrix $(z_{11}\; z_{12})$.

Given a formal Fourier-Jacobi expansion
\begin{gather*}
  f(\tau)
=
  \sum_{m \in \MatT{l}(\QQ)}
  \phi_m(\tau_{11}, z_1)\,
  e\!\left(m
   \left(\begin{matrix} \tau_{12} & z_{22} \\ \rT z_{22} & \tau_2   \end{matrix}\right) \right)
%\text{,}
\end{gather*}
of cogenus $l$, we define formal Fourier-Jacobi coefficients of index $m' \in \MatT{l'}(\QQ)$ by:
\begin{gather}
\label{eq:def:fourier-jacobi-coefficients-of-formal-expansions}
  \psi_{m'}(\tau_1, z)
=
  \sum_{\substack{n\in \Mat{l-l'}(\QQ) \\ r\in \Mat{l-l',l'}(\QQ)}}
   \phi_{\kzxz{n}{r}{\rT r}{m'}}(\tau_{11}, z_1)\,
   e\!\left( n\tau_{12} +2 r \rT z_{22} \right)
\text{.}
\end{gather}

\begin{lemma}
\label{la:formal-siegel-phi-operator}
Let $f \in \rmF\rmM^{(g,l)}_k$ be a symmetric formal Fourier-Jacobi series.  Fix $l' = 1$.  Then $\psi_0$ defined in~\eqref{eq:def:fourier-jacobi-coefficients-of-formal-expansions} is a symmetric Fourier-Jacobi series of genus~$g-1$ and cogenus $l-1$.
\end{lemma}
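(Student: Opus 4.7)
The plan is to write down $\psi_0$ explicitly, recognize it as a formal cogenus-$(l-1)$ expansion in its own right, and then transfer the $\GL{g}(\Z)$-symmetry of $f$ to a $\GL{g-1}(\Z)$-symmetry of $\psi_0$.

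First I would observe that the sum in~\eqref{eq:def:fourier-jacobi-coefficients-of-formal-expansions} collapses drastically when $l'=1$ and $m'=0$. Indeed, the only indices $m=\kzxz{n}{r}{\rT r}{0}$ for which $\phi_m$ can be nonzero satisfy $m\ge 0$, and positive semi-definiteness combined with a vanishing $(l,l)$-entry forces $r=0$. Hence
\begin{gather*}
  \psi_0
=
  \sum_{0\le n\in\MatT{l-1}(\QQ)}
  \phi_{\tilde m(n)}(\tau_{11},z_1)\,e(n\tau_{12})
\text{,}\quad
  \tilde m(n)
:=
  \kzxz{n}{0}{0}{0}
\text{.}
\end{gather*}

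Second, I would show that each $\phi_{\tilde m(n)}$ descends to a Jacobi form of genus $g-l$ and index $n$. The same positive semi-definiteness argument applied to the Fourier coefficients $c(\phi_{\tilde m(n)};n',r')$, writing $r'=(r'_1,r'_2)\in\Mat{g-l,l-1}(\QQ)\times\Mat{g-l,1}(\QQ)$, forces $r'_2=0$. Therefore $\phi_{\tilde m(n)}$ is independent of the variable $z_{12}$, and
\begin{gather*}
  \tilde\phi_n(\tau_{11},z_{11})
:=
  \phi_{\tilde m(n)}(\tau_{11},z_{11},0)
\end{gather*}
is well-defined. Restricting the Jacobi transformation law of $\phi_{\tilde m(n)}$ to the sub-Jacobi group supported on the first $l-1$ of the $l$ columns of $z_1$ then yields $\tilde\phi_n\in\rmJ^{(g-l)}_{k,n}$. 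Identifying $(\tau_{11},z_{11},\tau_{12})$ with the cogenus-$(l-1)$ block decomposition of a variable in $\HS_{g-1}$, we obtain $\psi_0=\sum_n\tilde\phi_n\,e(n\tau_{12})$, which is by construction a formal Fourier-Jacobi series of genus $g-1$ and cogenus $l-1$ with canonical coefficients $\tilde\phi_n$.

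Finally, a direct comparison of Fourier expansions yields
\begin{gather*}
  c(\psi_0;\,t')
=
  c\!\left(f;\,\kzxz{t'}{0}{0}{0}\right)
\quad\text{for all }0\le t'\in\MatT{g-1}(\QQ)\text{.}
\end{gather*}
Given $u'\in\GL{g-1}(\Z)$, I lift it to $u=\kzxz{u'}{0}{0}{1}\in\GL{g}(\Z)$; the bordering zeros are preserved under conjugation:
\begin{gather*}
  \rT u\kzxz{t'}{0}{0}{0}u
=
  \kzxz{\rT u't'u'}{0}{0}{0}
\text{.}
\end{gather*}
Since $\det(u)=\det(u')$, the symmetry condition for $f$ from Definition~\ref{def:formal-fourier-jacobi-expansions} transfers directly to $\psi_0$. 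The only step requiring genuine care is the second, namely showing that a Jacobi form whose index matrix has a zero column descends to a lower-rank Jacobi form; this is a standard fact about degenerate Jacobi forms, but it constitutes the substantive content of the argument. All remaining steps are block-matrix bookkeeping.
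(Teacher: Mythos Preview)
Your proposal is correct and follows essentially the same route as the paper's proof: both collapse the sum to block-diagonal indices via positive semi-definiteness, invoke the standard fact that a Jacobi form with degenerate index is constant in the corresponding $z$-variables, and then transfer the symmetry by lifting $u'\in\GL{g-1}(\Z)$ to $\kzxz{u'}{0}{0}{1}\in\GL{g}(\Z)$. Your write-up is somewhat more explicit about the descent of $\phi_{\tilde m(n)}$ to a genus-$(g-l)$ Jacobi form, but the argument is the same.
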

\begin{proof}
Consider $\phi_m(\tau_{11}, z_1)$ with
\begin{gather*}
  m
=
  \begin{pmatrix}
    m'' & 0 \\
    0  & 0
  \end{pmatrix}
\text{.}
\end{gather*}
By general theory of Siegel-Jacobi forms, $\phi_m$ is constant in $z_{12}$.  Therefore, $\psi_0$ depends only on $\tau_1$.  It can be written as
\begin{gather*}
  \psi_0(\tau_1)
=
  \sum_{m'' \in \Mat{l-1}(\QQ)}
  \phi_{\left(\begin{smallmatrix}m'' & 0 \\ 0 & 0 \end{smallmatrix}\right)}(\tau_{11}, z_{11})\,
  e(m'' \tau_{12})
\text{.}
\end{gather*}
The symmetry condition for $\psi_0$ follows directly from the one of $f$, by applying transformations of the form
\begin{gather*}
  \begin{pmatrix}
  u'' & 0 \\
  0 & 1
  \end{pmatrix}
  \in \GL{g}(\ZZ)
\text{,}\quad
  u'' \in \GL{g-1}(\ZZ)
\text{.}
\end{gather*}
\end{proof}

\begin{lemma}
\label{la:formal-theta-expansion}
Let $f \in \rmF\rmM^{(g,l)}_k$ be a symmetric formal Fourier-Jacobi series.  Fix $l' = l-1$ and a positive definite $m' \in \MatT{l'}(\QQ)$.  The formal Fourier-Jacobi coefficient $\psi_{m'}$ of $f$ defined in~(\ref{eq:def:fourier-jacobi-coefficients-of-formal-expansions}) has a formal theta expansion
\begin{gather}
\label{eq:formal-theta-expansions}
  \psi_{m'}(\tau_1, z)
=
  \sum_{\mu'} h_{m', \mu'}(\tau_1)\,
  \theta^{(g-l')}_{m', \mu'}(\tau_1, z)
\text{,}
\end{gather}
where the sum over $\mu'$ runs through $\Mat{g-l',l'}(\ZZ) (2 m')^{-1} \slashdiv \Mat{g-l',l'}(\ZZ)$, and the theta functions $\theta^{(g-l')}_{m', \mu'}$ are defined by \eqref{eq:defthetamu}.  We have $(h_{m', \mu'})_{\mu'} \in \rmF\rmM^{(g-l')}_{k - \frac{l'}{2}}\big( \rho^{(g-l')}_{m'} \big)$.
\end{lemma}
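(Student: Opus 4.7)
The lemma splits into two parts to establish: (A) existence of the formal theta decomposition~\eqref{eq:formal-theta-expansions}, and (B) membership of $(h_{m',\mu'})_{\mu'}$ in $\rmF\rmM^{(g-l')}_{k-l'/2}\bigl(\rho^{(g-l')}_{m'}\bigr)$.

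For (A), I would read off the Fourier expansion of $\psi_{m'}$ directly from~\eqref{eq:def:fourier-jacobi-coefficients-of-formal-expansions}: the coefficient at $e(N\tau_1 + R\rT z)$ equals $c(f;\, t)$ with $t = \left(\begin{smallmatrix} N & R/2 \\ \rT R/2 & m'\end{smallmatrix}\right)$, for $N\in\MatT{g-l'}(\QQ)$ and $R\in\Mat{g-l',l'}(\QQ)$. Applying the symmetry of Definition~\ref{def:formal-fourier-jacobi-expansions} to the block unipotent matrix $u = \left(\begin{smallmatrix} I_{g-l'} & 0 \\ -\rT\lambda & I_{l'}\end{smallmatrix}\right)$ with $\lambda\in\Mat{g-l',l'}(\ZZ)$ and computing $\rT u t u$ shows that $c(\psi_{m'};\, N, R)$ is invariant under $R \mapsto R + 2\lambda m'$ together with the corresponding quadratic shift $N \mapsto N + (R\rT\lambda + \lambda\rT R)/2 + \lambda m'\rT\lambda$. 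This is precisely the periodicity needed so that the coefficient depends on $(N,R)$ only through the class $\mu' = R(2m')^{-1}\bmod\Mat{g-l',l'}(\ZZ) \in D_{g-l'}(m')$ and the reduced index $\tilde N = N - \tfrac{1}{4}R(m')^{-1}\rT R$. Defining $h_{m',\mu'}(\tau_1)$ as the formal $q$-series in $\tau_1$ with coefficient $c(\psi_{m'};\, N, R)$ at $e(\tilde N \tau_1)$ (for any lift of $(\mu',\tilde N)$), a direct formal rearrangement using the definition~\eqref{eq:defthetamu} of $\theta^{(g-l')}_{m',\mu'}$ recovers~\eqref{eq:formal-theta-expansions}.

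For (B), the $\GL{g-l'}(\ZZ)$-symmetry is obtained by applying the symmetry of $f$ to the block diagonal $u = \left(\begin{smallmatrix} v & 0 \\ 0 & I_{l'}\end{smallmatrix}\right)$ with $v\in\GL{g-l'}(\ZZ)$, producing $c(\psi_{m'};\, N, R) = \omega^{2k}\, c(\psi_{m'};\, \rT v N v,\, \rT v R)$. By the standard transformation of the theta series $\theta^{(g-l')}_{m',\mu'}$ under $\rot(v)$—which introduces the automorphy factor $\omega^{l'}$ and permutes the index $\mu'$ via the dual of $\rho^{(g-l')}_{m'}$—this translates exactly into the required symmetry of $(h_{m',\mu'})_{\mu'}$ at weight $k - l'/2$ and type $\rho^{(g-l')}_{m'}$. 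To verify that the cogenus-$1$ Fourier–Jacobi coefficients of $(h_{m',\mu'})_{\mu'}$ are genuine vector-valued Jacobi forms, I would split $\tau_1 = \left(\begin{smallmatrix}\tau_{11} & z_{11} \\ \rT z_{11} & \tau_{12}\end{smallmatrix}\right)$ and $z = (\rT z_{12},\, \rT z_{22})^{\rT}$ as in Section~\ref{ssec:siegel-phi-and-fourier-jacobi}, and factor $\theta^{(g-l')}_{m',\mu'}$ according to the bottom row $x_2$ of the summation variable $x\in\mu'+\Mat{g-l',l'}(\ZZ)$. Matching the resulting expansion against the cogenus-$1$ expansion of $\psi_{m'}$ from~\eqref{eq:def:fourier-jacobi-coefficients-of-formal-expansions} identifies each cogenus-$1$ coefficient of $(h_{m',\mu'})_{\mu'}$ with a component of the partial index-$m'$ theta decomposition, in the $z_{12}$-direction, of the honest Siegel-Jacobi forms $\phi_{\left(\begin{smallmatrix} m''' & r \\ \rT r & m'\end{smallmatrix}\right)}$ appearing in $f$. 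Since partial theta decomposition of a Jacobi form yields vector-valued Jacobi forms of the predicted weight, index, and type, this provides the required Jacobi-form structure.

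\textbf{Main obstacle.} The main technical challenge is the final step of (B): aligning the cogenus-$1$ factorization of $\theta^{(g-l')}_{m',\mu'}$ with the partial index-$m'$ theta decomposition of the Siegel–Jacobi forms $\phi_m$, and tracking both layers of Weil representation factors, in order to confirm that the assembled cogenus-$1$ coefficients transform as genuine Jacobi forms under $\rho^{(g-l')}_{m'}$ rather than merely as formal series satisfying a periodicity.
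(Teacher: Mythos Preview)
Your proposal is correct and follows essentially the same route as the paper's proof: the paper likewise obtains the formal theta decomposition from the symmetry of $f$ under the block unipotent matrices $\left(\begin{smallmatrix}1 & 0\\ \rT\lambda & 1\end{smallmatrix}\right)$, derives the $\GL{g-l'}(\ZZ)$-symmetry of $(h_{m',\mu'})_{\mu'}$ from the block diagonal matrices, and then establishes that the cogenus-$1$ coefficients $\psi_{m',\mu',n'}$ are honest Jacobi forms by factoring $\theta^{(g-l')}_{m',\mu'}$ along the bottom row $x_2$ of the summation and matching against the partial theta decomposition of the holomorphic Jacobi forms $\phi_{\left(\begin{smallmatrix} n & r/2\\ \rT r/2 & m'\end{smallmatrix}\right)}$. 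The only points the paper makes slightly more explicit than your outline are that linear independence of the $\theta^{(g-l)}_{m',\mu_1'}$ in $z_{12}$ forces holomorphicity of $\psi_{m',\mu',n'}$, and that the transformation law then follows by letting the embedded Jacobi group $\Sp{2(g-l)}(\ZZ)\ltimes \Mat{g-l,l}(\ZZ)^2$ act on both sides of the resulting identity, invoking the argument of Ziegler.
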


\begin{remark}
An analogue of Lemma~\ref{la:formal-theta-expansion} can be proved without restrictions on $l'$.  In Section~\ref{sec:rigidity-scalar-valued}, we only need the case $l' = l-1$, and so we restrict to the present setting, in order to minimize technical effort.
\end{remark}

\begin{proof}[Proof of Lemma~\ref{la:formal-theta-expansion}]
The symmetry condition for the Fourier coefficients of $f$ allows us to define a formal theta expansion of the $\psi_{m'}$.  Indeed,
%the theta decomposition can be reduced to a statement on symmetry of Fourier coefficients, which holds by the assumption on $f$.
the symmetry of $f$ under matrices of the form
\begin{gather*}
  \begin{pmatrix}1 & 0\\ \rT \lambda & 1\end{pmatrix} \in \GL{g}(\Z)
\text{,}\quad
  \lambda \in \Mat{g-l',l'}(\Z)
\text{,}
\end{gather*}
implies the identity of formal series
\begin{gather*}
  \psi_{m'}(\tau_1, z)
=
  \sum_{\mu'} h_{m', \mu'}(\tau_1)\,
  \theta^{(g-l')}_{m', \mu'}(\tau_1, z)
\text{,}
\end{gather*}
where
\begin{gather*}
  c\big(h_{m', \mu'};\, n' - x m'  \rT x \big)
=
  c\big(\psi_{m'};\, n',\,x \big)
\end{gather*}
for any representative $x \in \Mat{g-l',l'}(\QQ)$ of $\mu'$.
We have to show that $(h_{m',\mu'})_{\mu'}$ is a symmetric formal Fourier-Jacobi series of  cogenus $1$ and type $\rho^{(g - l')}_{m'}$.

As a first step, we show symmetry of Fourier coefficients.
Symmetry of Fourier coefficients of $f$ implies that
\begin{gather*}
  c\big( \psi_{m'};\, n', r' \big)
=
  \det(u)^k\, c\big( \psi_{m'};\, \rT u n' u, \rT u r' \big)
\end{gather*}
for all $n' \in \MatT{g-l'}(\Q)$, $r' \in \Mat{g-l',l'}(\Q)$, and $u \in \GL{g-l'}(\Z)$.  By fixing a representative $x \in \Mat{g-l',l'}(\QQ)$ of $\mu'$, we can deduce the corresponding relation for the Fourier coefficients of $h_{m', \mu'}$.  For $n' \in \MatT{g-l'}(\QQ)$, we have
\begin{multline*}
  c\big(h_{m', \mu'};\, n' - x m'  \rT x \big)
=
  c\big(\psi_{m'};\, n',\,x \big)
\\
=
  \det(u)^k\,  c\big(\psi_{m'};\, \rT u n' u, \rT u x \big)
=
  \det(u)^k\, c\big(h_{m', \rT u \mu'};\, \rT u n' u - \rT u x m' \rT x u \big)
\text{.}
\end{multline*}
Note that $\rT u$ acts on $\mu'$ in accordance with the representation $\rho^{(g-l')}_{m'}$.

As a second step, we have to examine the coefficients $\psi_{m',\mu',n'}$ of $h_{m', \mu'}$ in the formal expansion
\begin{align}
\label{eq:formal-theta-expansion-fourier-jacobi-expansion}
  h_{m',\mu'}(\tau_1)
=
  \sum_{n'\in \Q_{\geq 0}}
  \psi_{m',\mu',n'}(\tau_{11},z_{11})\,
  e(n'\tau_{12})
\text{.}
\end{align}
In view of \eqref{eq:def:fourier-jacobi-coefficients-of-formal-expansions}, \eqref{eq:formal-theta-expansions}, and \eqref{eq:formal-theta-expansion-fourier-jacobi-expansion}, comparing the $m'$-th Fourier-Jacobi coefficient for $m'\in \MatT{l'}(\QQ)$, we obtain the identity of formal power series
\begin{multline}
  \sum_{\substack{ n\in \Z_{\geq 0} \\ r \in \Mat{1,l'}(\Q)}}
  \phi_{\kzxz{n}{r/2}{\rT r/2}{m'}}(\tau_{11}, z_1)\,
  e\!\left( n\tau_{12} + r \rT z_{22} \right)
\\
=
  \sum_{n'\in \Q_{\geq 0}}\sum_{\mu'}
  \Big(
   \psi_{m', \mu', n'}(\tau_{11}, z_{11})\, e(n' \tau_{12})
  \Big)\,
  \theta^{(g-l')}_{m', \mu'}(\tau_1, z)
\text{,}
\end{multline}
which uniquely determines the coefficients $\psi_{m',\mu',n'}$.
We write $\mu'=\left(\begin{smallmatrix} \mu_1'\\\mu_2'\end{smallmatrix}\right)$ where the first component $\mu_1'\in \Mat{g-l,l'}(\Z)(2m')^{-1} \slashdiv \Mat{g-l,l'}(\Z)$ and  $\mu_2'\in \Mat{1,l'}(\Z)(2m')^{-1} \slashdiv \Mat{1,l'}(\Z)$.
Then
\begin{align*}
&\theta^{(g-l')}_{m', \mu'}(\tau_1, z) \\
&= \sum_{\substack{x_1\in \mu_1'+\Mat{g-l,l'}(\Z)\\ x_2 \in \mu_2'+\Mat{1,l'}(\Z)}} e\Big( x_1 m' \rT x_1 \tau_{11} + 2x_1 m' (\rT x_2 \rT z_{11}+ \rT z_{12})\Big)
 e(x_2 m' \rT x_2 \tau_{12} + 2x_2 m' \rT z_{22}).
\end{align*}
Inserting this in the previous identity and comparing the coefficients at
$e\!\left( n\tau_{12} + r \rT z_{22} \right)$, we find that for all $\mu_2'$, all $r$ with $r (2 m')^{-1} \equiv \mu_2'\pmod{\Mat{1,l'}(\Z)}$,
and all $n\in \Z_\geq 0$ we have
\begin{align}
\nonumber
\phi_{\kzxz{n}{r/2}{\rT r/2}{m'}}(\tau_{11}, z_1)
&=
  \sum_{\mu_1'}
   \psi_{m', \left(\begin{smallmatrix} \mu_1'\\\mu_2'\end{smallmatrix}\right), n'}(\tau_{11}, z_{11})\,
\sum_{x_1}
e\Big( x_1 m' \rT x_1 \tau_{11} + 2x_1 m' (\rT x_2 \rT z_{11}+ \rT z_{12})\Big)\\
\label{eq:fourier-jacobi-of-theta-decomposition}
&=
  \sum_{\mu_1'}
   \psi_{m', \left(\begin{smallmatrix} \mu_1'\\\mu_2'\end{smallmatrix}\right), n'}(\tau_{11}, z_{11})\,
   \theta^{(g-l)}_{m',\mu_1'}(\tau_{11}, z_{12}+z_{11} x_2)
\text{.}
\end{align}
Here $x_2=r (2 m')^{-1}$ and $n'= n-x_2m'\rT x_2$. This can be viewed as a partial theta decomposition
of the holomorphic
Jacobi form on the left hand side. The linear independence of the theta functions $\theta^{(g-l)}_{m',\mu_1'}(\tau_{11}, z_{12})$ as functions in $z_{12}$
implies that the $\psi_{m', \mu', n'}$ are holomorphic.

To finish the proof, consider the action
of the Siegel-Jacobi group
$\Sp{2(g-l)}(\Z)\ltimes \Mat{g-l,l}(\Z)^2$ embedded into $\Sp{2(g-l')}(\ZZ) \ltimes \Mat{g-l', l'}(\ZZ)^2 \subset \Sp{2g}(\ZZ)$ on the above identity \eqref{eq:fourier-jacobi-of-theta-decomposition}.
The left hand side is invariant in weight $k$ and index $n$ by the assumption on $f$.
On the right hand side $\theta^{(g-l)}_{m', \mu_1'}$ transforms by the restriction of the dual of $\rho^{(g-l')}_{m'}$ to the genus $g-l$ Jacobi group.
Arguing as in \cite{ziegler-1989}, Section~3, this implies the transformation law for $(\psi_{m', \mu', n'})_{\mu'}$.
\end{proof}

\subsection{Asymptotic dimensions}

We now establish formulas for the asymptotic dimensions of $\rmF\rmM^{(g)}_{k}$.  Our main tools are the order filtration and the theta decomposition for Siegel-Jacobi forms. Recall that~$\rmJ^{(g - 1)}_{k, m}[o]$ denotes the space of Siegel Jacobi forms~$\phi$ of genus~$g-1$, weight~$k$, index~$m$, and vanishing order $\ord \phi \ge o$.
\begin{lemma}
\label{la:embedding-of-formal-fourier-jacobi-expansions}
For every~$g$ and every~$k$, there is a (non canonical) embedding of vector spaces
\begin{gather*}
  \rmF\rmM^{(g)}_{k}
\longhookrightarrow
  \prod_{m \ge 0}  \rmJ^{(g - 1)}_{k, m}[m]
\text{.}
\end{gather*}
\end{lemma}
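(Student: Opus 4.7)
The plan is to construct the embedding non-canonically as follows.  For each $m \ge 0$, fix a vector space complement $U_m$ of $\rmJ^{(g-1)}_{k,m}[m]$ inside $\rmJ^{(g-1)}_{k,m}$ and let $\pi_m\colon \rmJ^{(g-1)}_{k,m} \thra \rmJ^{(g-1)}_{k,m}[m]$ be the associated projection.  Set
\[
\Phi\colon \rmF\rmM^{(g)}_k \lra \prod_{m \ge 0} \rmJ^{(g-1)}_{k,m}[m],
\qquad
f \longmapsto \big(\pi_m(\phi_m)\big)_{m \ge 0},
\]
where $(\phi_m)_m$ are the canonical Fourier-Jacobi coefficients of $f$.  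Linearity is immediate, so all the content lies in showing that $\Phi$ is injective.

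For injectivity, I would argue by minimality of the lowest non-vanishing Fourier-Jacobi index.  Suppose $\Phi(f) = 0$, so that $\phi_m \in U_m$ for every $m$, and suppose for contradiction that $f \ne 0$.  The admissible cogenus-$1$ Fourier-Jacobi indices of $f$ are non-negative integers, so the non-empty set $\{m : \phi_m \ne 0\}$ admits a minimum $m_0$.  The aim is to show that $\phi_{m_0} \in \rmJ^{(g-1)}_{k, m_0}[m_0]$; combined with $\phi_{m_0} \in U_{m_0}$ and the direct sum decomposition $\rmJ^{(g-1)}_{k,m_0} = \rmJ^{(g-1)}_{k,m_0}[m_0] \oplus U_{m_0}$, this forces $\phi_{m_0} = 0$, contradicting the choice of $m_0$.

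The key step exploits the $\GL{g}(\ZZ)$-symmetry of Definition~\ref{def:formal-fourier-jacobi-expansions}.  Pick $(n, r)$ with $c(\phi_{m_0}; n, r) \ne 0$ and set $t = \kzxz{n}{r/2}{\rT r/2}{m_0}$, so that $c(f; t) \ne 0$.  For any primitive $v \in \ZZ^{g-1}$, the vector $\tilde v = \left(\begin{smallmatrix} v \\ 0 \end{smallmatrix}\right) \in \ZZ^g$ is again primitive and can therefore be completed to a matrix $u \in \GL{g}(\ZZ)$ with last column $\tilde v$; the bottom-right entry of $\rT u t u$ is then $\rT v n v$.  The symmetry condition yields $c(\phi_{\rT v n v}; n', r') \ne 0$ for the appropriate $(n', r')$ read off from $\rT u t u$, and minimality of $m_0$ forces $\rT v n v \ge m_0$.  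Since the minimum of a positive semi-definite integral form on $\ZZ^{g-1} \setminus \{0\}$ is always attained on a primitive vector, we get $\ord n \ge m_0$; running over all nonzero $(n, r)$ gives $\ord \phi_{m_0} \ge m_0$, as desired.

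The argument is elementary and requires no slope bounds or analytic input at this stage, only the combinatorics of the $\GL{g}(\ZZ)$-action on Fourier coefficients.  The only mildly subtle point, which is also where the whole argument hinges, is the observation that the minimum value represented by a positive semi-definite integral form is always attained on a primitive vector --- a standard fact that makes it possible to realize $\tilde v$ as the last column of an element of $\GL{g}(\ZZ)$ and thereby invoke the symmetry condition.
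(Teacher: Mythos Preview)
Your proof is correct and rests on the same key mechanism as the paper: the $\GL{g}(\ZZ)$-symmetry forces the lowest nonvanishing Fourier--Jacobi coefficient $\phi_{m_0}$ to lie in $\rmJ^{(g-1)}_{k,m_0}[m_0]$.  The organisation differs slightly.  The paper works with the associated graded of the order filtration $\rmF\rmM^{(g)}_k \supset \rmF\rmM^{(g)}_k[1] \supset \cdots$, chooses linear sections $\ell_m$ at the level of $\rmF\rmM^{(g)}_k$, and then uses that the $m$-th Fourier--Jacobi coefficient induces an injection $\rmF\rmM^{(g)}_k[m]/\rmF\rmM^{(g)}_k[m+1] \hookrightarrow \rmJ^{(g-1)}_{k,m}[m]$.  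You instead make the non-canonical choice downstairs, picking complements $U_m$ of $\rmJ^{(g-1)}_{k,m}[m]$ inside $\rmJ^{(g-1)}_{k,m}$ and projecting each $\phi_m$.  Both constructions give (different) embeddings; what they share is exactly the fact you spell out with the primitive-vector argument, which the paper uses as well but leaves to the reader.  In that sense your write-up is a bit more self-contained.
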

\begin{proof}
Consider the graded ring associated to the decreasing filtration
\begin{gather*}
  \rmF\rmM^{(g)}_k
\supset
  \rmF\rmM^{(g)}_k[1]
\supset
  \cdots
\end{gather*}
of symmetric formal Fourier-Jacobi series by their vanishing order.
For each $m \ge 0$, choose a linear section
\begin{gather*}
  \ell_m
:\,
  \rmF\rmM^{(g)}_k[m] \slashdiv \rmF\rmM^{(g)}_k[m+1]
\lra
  \rmF\rmM^{(g)}_k[m]
\subseteq
  \rmF\rmM^{(g)}_k
%\text{.}
\end{gather*}
for the canonical projection.
Recursively define formal Fourier-Jacobi series $f_m$ by means of $f_0 = f$ and $f_m = f_{m - 1} - \ell_{m-1}(f_{m-1})$ for $m \ge 1$.  The map from symmetric formal Fourier-Jacobi series into the corresponding graded ring
\begin{align*}
  \rmF\rmM^{(g)}_k
&\longrightarrow
  \prod_{m \ge 0} \rmF\rmM^{(g)}_k[m] \slashdiv \rmF\rmM^{(g)}_k[m+1]
\\
  f
&\lmto
  \Big( f_m +\rmF\rmM^{(g)}_k[m+1]\Big)_{m \ge 0}
\end{align*}
is injective, because its kernel equals $\bigcap_m \rmF\rmM^{(g)}_k[m] = \{0\}$.

By mapping a symmetric formal Fourier-Jacobi series in $\rmF\rmM^{(g)}_k[m]$ to its $m$-th Fourier-Jacobi coefficient, we obtain maps
\begin{gather*}
  \rmF\rmM^{(g)}_k[m]
\lra
  \rmJ^{(g-1)}_{k, m}[m]
\text{,}
\end{gather*}
whose kernel, for given~$m$, is $\rmF\rmM^{(g)}_k[m + 1]$.  This means that the maps
\begin{gather*}
  \rmF\rmM^{(g)}_k[m] \slashdiv \rmF\rmM^{(g)}_k[m + 1]
\longhookrightarrow
  \rmJ^{(g-1)}_{k, m}[m]
\text{,}
\end{gather*}
are injective.  By combining them with the above injection, we obtain the statement.
\end{proof}

\begin{lemma}
\label{la:maximal-vanishing-relation-for-jacobi-forms}
If $m\in \Z$ with $m> \frac{4}{3} \frac{k}{\varrho_g}$, then $\dim \rmJ^{(g)}_{k, m}[m] = 0$.
\end{lemma}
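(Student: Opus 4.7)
The plan is to use the theta decomposition~\eqref{eq:theta-decomposition} to reduce the assertion to Proposition~\ref{prop:slope-bound-vector-valued}. Given $\phi \in \rmJ^{(g)}_{k, m}[m]$, I would write
\begin{equation*}
  \phi(\tau, z) = \sum_{\mu} h_\mu(\tau)\, \theta^{(g)}_{m, \mu}(\tau, z),
\end{equation*}
with $(h_\mu)_\mu \in \rmM^{(g)}_{k - 1/2}(\rho^{(g)}_m)$. Comparing the Fourier expansions of the two sides one obtains, for $\mu \equiv r/(2m) \pmod{\ZZ^g}$, the identity
\begin{equation*}
  c(\phi;\, n, r) = c\big(h_\mu;\; n - r\rT r/(4m)\big).
\end{equation*}

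The main step is to convert the hypothesis $\ord\,\phi \geq m$ into a lower bound on $\ord\,h_\mu$. Fix $\mu$, a Fourier index $n'$ with $c(h_\mu;\, n') \neq 0$, and a primitive vector $v \in \ZZ^g$. Because $v$ is primitive, $\rT v\,\ZZ^g = \ZZ$, so as $r$ varies over the coset $2m\mu + 2m\ZZ^g$ the number $\rT v r$ runs through a translate of $2m\ZZ$; in particular one may choose $r$ with $|\rT v r| \leq m$. For such $r$ the identity above gives $c(\phi;\, n' + r\rT r/(4m),\, r) = c(h_\mu;\, n') \neq 0$, and the hypothesis $\ord\,\phi \geq m$ applied to this coefficient and the vector $v$ yields
\begin{equation*}
  m \leq \rT v \big(n' + r\rT r/(4m)\big) v = \rT v n' v + \frac{(\rT v r)^2}{4m} \leq \rT v n' v + \frac{m}{4},
\end{equation*}
so $\rT v n' v \geq 3m/4$. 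Since $n' \geq 0$ by the Koecher principle, the infimum defining $\ord\,h_\mu$ is attained on primitive $v$, and one concludes $\ord\,h_\mu \geq 3m/4$ for every $\mu$.

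To finish, the assumption $m > \frac{4}{3}\,k/\varrho_g$ gives $3m/4 > k/\varrho_g > (k - \tfrac{1}{2})/\varrho_g$, so Proposition~\ref{prop:slope-bound-vector-valued} applied at weight $k - \tfrac{1}{2}$ and representation $\rho^{(g)}_m$ forces $h_\mu = 0$ for every $\mu$; hence $\phi = 0$. The only real obstacle is the elementary lattice observation that $|\rT v r|$ can be kept $\leq m$ within the coset $r \in 2m\mu + 2m\ZZ^g$: this absorbs exactly a factor of $m/4$ in the order estimate and is what produces the constant $4/3$ in the statement.
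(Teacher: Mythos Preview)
Your proof is correct and follows essentially the same route as the paper. Both arguments use the theta decomposition, show that the vector $(h_\mu)_\mu$ has vanishing order at least $3m/4$, and then invoke Proposition~\ref{prop:slope-bound-vector-valued}. The only difference is bookkeeping: the paper introduces the auxiliary order $\ord_r$ (measuring the $(g,g)$-entry of the index) and works with the single test vector $e_g$, relying implicitly on the $\GL{g}(\ZZ)$-symmetry of $(h_\mu)_\mu$ to pass from a bound on $n'_{g,g}$ to a bound on $\ord h$; you instead treat every primitive $v$ directly and obtain $\rT v n' v \geq 3m/4$ without that intermediate step. Your observation that primitivity of $v$ gives $\rT v\,\ZZ^g=\ZZ$, so $\rT v r$ can be forced into $[-m,m]$ within the coset, is exactly the content of the paper's choice $-m<r_g\le m$. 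You are also slightly more careful than the paper in applying Proposition~\ref{prop:slope-bound-vector-valued} at the correct weight $k-\tfrac12$.
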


\begin{proof}
Fix some $\phi \in \rmJ^{(g)}_{k, m}[m]$ with theta decomposition
\begin{gather*}
\phi(\tau,z)=  \sum_{\mu \in \frac{1}{2m} \ZZ^g \slashdiv \ZZ^g}
  h_\mu(\tau)\, \theta^{(g)}_{m, \mu}(\tau, z)
\text{.}
\end{gather*}
%For a modular form of genus $g$ for the Weil representation $\rho_m^{(g)$
Given $r \in \Z^g$ and a Jacobi form~$\psi$ of genus~$g$, set
\begin{gather*}
  \ord_{r}\,\psi
=
  \inf \big\{
  m' \in \QQ \,:\,
  \text{$\exists t \in \MatT{g}(\QQ)$
  such that
  $c(\psi;\, t, r) \ne 0$ and $t_{g, g} = m'$}
  \big\}
\text{.}
\end{gather*}
Observe that $\ord\, \psi\leq \ord_r\, \psi $ for all~$r$.  In analogy with the usual vanishing order, we have $\ord_r\, f \psi = \ord\,f + \ord_r\,\psi$ for any Siegel modular form~$f$ of genus $g$.

For $\frac{1}{2m}r \in \mu + \ZZ^g$, we have
\begin{gather*}
  \ord\, \phi
\le
  \ord_{r}\, \phi
=
  \ord\, h_\mu + \ord_{r}\, \theta^{(g)}_{m, \mu}
\text{.}
\end{gather*}
Further, for $\frac{1}{2m}r = \frac{1}{2m}(r_1, \ldots, r_g) \in \mu + \ZZ^g$, we have $\ord_r\, \theta^{(g)}_{m, \mu} \le \frac{1}{4m}r_g^2$.  By choosing $-m < r_g \le m$, we find that $\ord_r\, \theta^{(g)}_{m, \mu} \le \frac{m}{4}$.  By the hypothesis, we have $\ord\, \phi \ge m$, so that $\ord\, h_\mu \ge m - \frac{m}{4} = \frac{3}{4}m$.
Hence we find  $\ord \,h >\frac{k}{\varrho_g}$, which  implies that $h = 0$ by
Proposition~\ref{prop:slope-bound-vector-valued}.
% , if $k < \frac{3}{4} \varrho_g m$.
\end{proof}

\begin{theorem}[{Runge}]
\label{thm:finitely-generated-ring-of-jacobi-forms-runge}
Fix $\epsilon > 0$, and assume that $g \ge 2$.  Then the ring
\begin{gather*}
  \bigoplus_{\substack{k, m \in \ZZ \\ k \ge \epsilon m}}
  \rmJ^{(g)}_{k, m}
\end{gather*}
is finitely generated.%
\footnote{Note added in June 2022: In the recent preprint arXiv:2203.14583v1~\cite{botero-burgos-gil-holmes-de-jong-2022-preprint} it is shown that Runge's Theorem is incorrect. However, in the present paper this result is only used in the proof of Lemma~\ref{la:asymptotic-dimension-for-jacobi-forms}. As explained in Remark~\ref{rm:la:asymptotic-dimension-for-jacobi-forms}, this Lemma~\ref{la:asymptotic-dimension-for-jacobi-forms} can also be proved directly by an analytic estimate, making the proof indepentent of Theorem~\ref{thm:finitely-generated-ring-of-jacobi-forms-runge}.}
\end{theorem}
\begin{proof}
This is Theorem~5.5 in \cite{runge-1995}; see also Remark~3.8 therein.
\end{proof}

\begin{lemma}
\label{la:asymptotic-dimension-for-jacobi-forms}
Fix $\epsilon > 0$.  For $k \ge \epsilon m$ and positive~$k$, we have
\begin{gather*}
  \dim \rmJ^{(g)}_{k, m}
\ll_{\epsilon}
  k^{\frac{g(g+1)}{2}} (m^g + 1)
\ll_{\epsilon}
  k^{\frac{g(g+3)}{2}}
\text{.}
\end{gather*}
\end{lemma}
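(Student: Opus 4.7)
The plan is to invoke Runge's Theorem~\ref{thm:finitely-generated-ring-of-jacobi-forms-runge} and then count monomials in a carefully chosen system of generators. By that theorem, the bi-graded algebra $R_\epsilon := \bigoplus_{k \ge \epsilon m} \rmJ^{(g)}_{k,m}$ is finitely generated over $\CC$. I would fix bi-homogeneous generators in two batches: a generating set $f_1, \ldots, f_{N_0}$ of the Siegel subring $\rmM^{(g)}_\bullet$ (all of bi-degree $(\cdot,0)$) together with additional generators $\psi_1, \ldots, \psi_{N_1}$ of bi-degrees $(k_i, m_i)$ with $m_i \ge 1$.

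Every element of $\rmJ^{(g)}_{k,m}$ may then be written as a linear combination of monomials $f \cdot \prod_i \psi_i^{b_i}$ with $(b_i) \in \ZZ_{\ge 0}^{N_1}$ satisfying $\sum_i b_i m_i = m$ and with $f \in \rmM^{(g)}_{k'}$ for $k' = k - \sum_i b_i k_i \ge 0$. This yields the upper bound
\begin{gather*}
  \dim \rmJ^{(g)}_{k,m}
\le
  \sum_{\substack{(b_i) \in \ZZ_{\ge 0}^{N_1} \\ \sum_i b_i m_i = m,\;\sum_i b_i k_i \le k}}
  \dim \rmM^{(g)}_{k - \sum_i b_i k_i}
\text{.}
\end{gather*}
The classical polynomial growth estimate $\dim \rmM^{(g)}_{k'} \ll k'^{g(g+1)/2}$ (a consequence of the Krull dimension of the finitely generated ring $\rmM^{(g)}_\bullet$) dominates each summand by a multiple of $k^{g(g+1)/2}$.

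It remains to count the index tuples $(b_i)$, which amounts to a lattice-point count in a bounded polytope and so produces a polynomial in $m$. The main obstacle is to pin down the exponent at exactly $g$, rather than the crude bound $O(m^{N_1-1})$ coming from the naive generator count. The sharp exponent is dictated by the Krull dimension of $R_\epsilon$, which should be $\tfrac{g(g+3)}{2} + 2$, matching the dimension $\tfrac{g(g+3)}{2}$ of the Jacobi half-space $\HS_{g,1}$ together with the two gradings by $k$ and $m$; the contribution to growth in the $m$-direction is then precisely $g$, as required. Once the bound $k^{g(g+1)/2}(m^g + 1)$ is established, the second inequality $\ll k^{g(g+3)/2}$ is immediate from the hypothesis $k \ge \epsilon m$, which gives $m^g + 1 \ll_\epsilon k^g$.
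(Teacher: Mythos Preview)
Your overall strategy matches the paper's: invoke Runge's finiteness theorem, then count monomials. The gap is exactly where you flag it. From a bare finite generating set $\psi_1,\ldots,\psi_{N_1}$ of positive index you only get $\#\{(b_i):\sum b_i m_i = m\}\ll m^{N_1-1}$, and nothing in your argument forces $N_1-1\le g$. Your appeal to the Krull dimension of $R_\epsilon$ is a heuristic, not a proof: you neither justify the value $\tfrac{g(g+3)}{2}+2$, nor explain how the total Krull dimension pins down the \emph{split} exponent $g$ in the $m$-direction (a priori the bigraded Hilbert function could distribute the growth differently between $k$ and $m$, and in any case your monomial count is an \emph{upper} bound that ignores relations among the $\psi_i$).

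The paper closes this gap in two steps. First, it applies Noether normalization to $R_\epsilon$ to obtain algebraically independent bihomogeneous elements $f_1,\ldots,f_{d+1}$ of index~$0$ and $\phi_1,\ldots,\phi_{d_\rmJ+1}$ of positive index, over which $R_\epsilon$ is a finite module; counting monomials in a polynomial ring then gives the sharp exponent $d_\rmJ$ in $m$. Second, it bounds $d_\rmJ\le g$ by invoking a separate result of Runge (Theorem~5.1 in \cite{runge-1995}) identifying Jacobi forms with sections of line bundles on a projective variety of dimension $\tfrac{g(g+1)}{2}+g$; this geometric input is what actually fixes the exponent. Both ingredients---Noether normalization to make the monomial count tight, and the dimension of Runge's variety to bound $d_\rmJ$---are missing from your sketch. (The paper also treats $g=1$ separately via weak Jacobi forms, since Runge's finiteness theorem needs $g\ge 2$.)
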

\begin{proof}
If $g = 1$, one can prove this using weak Jacobi forms as in~\cite{eichler-zagier-1985}.  We restrict ourselves to the case $g \ge 2$, so that the assumptions of Theorem~\ref{thm:finitely-generated-ring-of-jacobi-forms-runge} are satisfied.

We apply Noether normalization to the bigraded ring $\bigoplus_{k \ge \epsilon m} \rmJ^{(g)}_{k, m}$.  This yields $d + 1 = \frac{g (g + 1)}{2} + 1$ Siegel modular forms $f_1, \ldots, f_{d + 1}$ of genus~$g$ and $d_\rmJ + 1$ Siegel-Jacobi forms $\phi_1, \ldots, \phi_{d_\rmJ + 1}$ that are algebraically independent.  We fix a basis $\psi_1, \ldots, \psi_{r_\rmJ}$ of $\bigoplus_{k \ge \epsilon m} \rmJ^{(g)}_{k, m}$ over
\begin{gather*}
  R
:=
  \CC[f_1, \ldots, f_{d+1}, \phi_1, \ldots, \phi_{d_\rmJ + 1}]
\text{.}
\end{gather*}

Theorem~5.1 of~\cite{runge-1995} identifies Jacobi forms with sections of line bundles over a projective variety of dimension $\frac{g(g + 1)}{2} + g$.  Therefore, we have that $d_\rmJ \le g$.

Write $k(f_i)$, $k(\phi_i)$, $m(\phi_i)$, $k(\psi_i)$, and $m(\psi_i)$ for the weight and index of the $f_i$, $\phi_i$, and $\psi_i$.
Moreover, for a tuple $a=(a_1, \ldots, a_{d_\rmJ + 1})$ of $d_\rmJ + 1$ integers write
\begin{align*}
m(a)&= a_1 m(\phi_1)+\dots +  a_{d_\rmJ + 1} m(\phi_{d_\rmJ + 1}),\\
k(a)&= a_1 k(\phi_1)+\dots +  a_{d_\rmJ + 1} k(\phi_{d_\rmJ + 1}).
\end{align*}
We denote the graded pieces of $R$ by $R_{k, m}$.  Note that $R_{k,0} \subseteq \rmM^{(g)}_k$, yielding bounds for $\dim R_{k,0}$.  For $m > 0$, we bound the graded dimensions as follows:
\begin{align*}
  \dim R_{k, m}
&=
  \sum_{\substack{a \in \ZZ_{\geq 0}^{d_\rmJ + 1} \\ m(a) = m}}
  \dim R_{k - k(a), 0}
\ll_\epsilon
  \sum_{\substack{a \in \ZZ_{\geq 0}^{d_\rmJ + 1} \\ m(a) = m}}
  (k - k(a) + 1)^d
  %{\substack{0 \le a_1, \ldots, a_{d_\rmJ + 1} \in \ZZ \\ \sum a_i m(\phi_i) = m}}
\\
&\ll_\epsilon
  (k + 1)^d\cdot
  \# \big\{ a \in \ZZ_{\geq 0}^{d_\rmJ + 1} :\; m(a) = m \big\}
\ll_\epsilon
  (k + 1)^d \big( m^{d_\rmJ} + 1 \big)
\text{.}
\end{align*}
We find
\begin{align*}
  \dim J^{(g)}_{k, m}
&=
  \sum_{i = 1}^{r_\rmJ}
  \dim R_{k - k(\psi_i), m - m(\psi_i)}
\\
&\ll_\epsilon
  \sum_{i = 1}^{r_\rmJ}
  \big( (k - k(\psi_i) + 1)^d \big)\,
  \big( (m - m(\psi_i))^{d_\rmJ} + 1 \big)
\ll_\epsilon
  k^d ( m^{d_\rmJ} + 1 )
\text{,}
\end{align*}
as desired.
\end{proof}

\begin{remark}
\label{rm:la:asymptotic-dimension-for-jacobi-forms}
An alternative, more analytic, proof of $\dim \rmJ^{(g)}_{k,m} \ll k^{\frac{g(g+3)}{2}}$ for $k > 0$ can be obtained by specializing the dimension bounds of \cite[Theorem 4]{wang-1999} to our situation, making use of $k \ge \epsilon m$.
\end{remark}

\begin{theorem}
\label{thm:asymptotic-dimension-for-formal-fourier-jacobi-expansions}
For every~$g$ and positive $k$, we have
\begin{gather*}
  \dim \rmF\rmM^{(g)}_{k}
\ll_g
  k^{\frac{g (g + 1)}{2}}
\text{.}
\end{gather*}
\end{theorem}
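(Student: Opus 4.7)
The plan is to combine the three preparatory results of this subsection in a straightforward way. By Lemma~\ref{la:embedding-of-formal-fourier-jacobi-expansions} there is an injection
\begin{gather*}
  \rmF\rmM^{(g)}_k
\longhookrightarrow
  \prod_{m \ge 0} \rmJ^{(g-1)}_{k, m}[m]
\text{,}
\end{gather*}
so it suffices to bound $\sum_{m \ge 0} \dim \rmJ^{(g-1)}_{k, m}[m]$ by $O_g\!\left(k^{g(g+1)/2}\right)$.

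First, I would invoke Lemma~\ref{la:maximal-vanishing-relation-for-jacobi-forms} in genus $g-1$ to truncate the sum: it gives $\rmJ^{(g-1)}_{k, m}[m] = 0$ as soon as $m > \tfrac{4}{3}\, k / \varrho_{g-1}$. Since Theorem~\ref{thm:slope-estimate-for-g-ge-9} ensures that $\varrho_{g-1}$ is a positive constant depending only on $g$, only finitely many indices contribute, and the effective range of summation is $0 \le m \le C_g\, k$ for an explicit $C_g > 0$. Second, on this truncated range the hypothesis $k \ge \epsilon m$ of Lemma~\ref{la:asymptotic-dimension-for-jacobi-forms} is satisfied with $\epsilon = 1/C_g$, so applying that lemma in genus $g-1$ yields
\begin{gather*}
  \dim \rmJ^{(g-1)}_{k, m}
\ll_g
  k^{(g-1)g/2}\, (m^{g-1} + 1)
\text{.}
\end{gather*}

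Summing over $0 \le m \le C_g\, k$, the factor $\sum_{m} (m^{g-1} + 1)$ contributes $O_g(k^g)$, giving the total bound
\begin{gather*}
  \sum_{m = 0}^{C_g k} \dim \rmJ^{(g-1)}_{k, m}[m]
\ll_g
  k^{(g-1)g/2} \cdot k^{g}
=
  k^{g(g+1)/2}
\text{.}
\end{gather*}
The only arithmetic to check is the identity $(g-1)g/2 + g = g(g+1)/2$ for the exponents. There is no substantive obstacle: the real work is already carried out by the embedding lemma, the vanishing-order lemma, and Runge's finite generation theorem underlying Lemma~\ref{la:asymptotic-dimension-for-jacobi-forms}.
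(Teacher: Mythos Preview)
Your proof is correct and follows essentially the same approach as the paper: the embedding lemma, truncation via the vanishing-order lemma, and then the dimension bound for Jacobi forms. The only cosmetic difference is that you invoke the first inequality of Lemma~\ref{la:asymptotic-dimension-for-jacobi-forms} (the bound $k^{(g-1)g/2}(m^{g-1}+1)$) and then sum the $m$-dependence explicitly, whereas the paper uses the second, already-summed inequality $k^{(g-1)(g+2)/2}$ and multiplies by the $O_g(k)$ terms; both routes yield the same exponent $g(g+1)/2$.
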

\begin{proof}
By Lemma~\ref{la:embedding-of-formal-fourier-jacobi-expansions}, we have an embedding
\begin{gather*}
  \rmF\rmM^{(g)}_{k}
\lhra
  \prod_{m \ge 0}  \rmJ^{(g - 1)}_{k, m}[m]
\text{.}
\end{gather*}
Using Lemma~\ref{la:maximal-vanishing-relation-for-jacobi-forms}, we find that
\begin{gather*}
  \dim \rmF\rmM^{(g)}_{k}
\le
  \sum_{m = 0}^{\frac{4 k}{3 \varrho_{g-1}}}
  \dim \rmJ^{(g - 1)}_{k, m}[m]
\text{.}
\end{gather*}

Lemma~\ref{la:asymptotic-dimension-for-jacobi-forms} provides us with a uniform estimate for the dimension of spaces of Jacobi forms that occur.  On plugging this in, we find the result:
\begin{gather*}
  \dim \rmF\rmM^{(g)}_{k}
\ll_g
  \sum_{m = 0}^{\frac{4 k}{3 \varrho_{g-1}}}
  k^{\frac{(g - 1) (g + 2)}{2}}
\ll_g
  k^{\frac{g (g + 1)}{2}}
\text{.}
\end{gather*}
\end{proof}

\section{Formal Fourier-Jacobi Expansions as an Algebra Extension}
\label{sec:purely-transcendental-extension}

In this section, we show that the ring of holomorphic Siegel modular forms is algebraically closed in
the ring of formal Fourier-Jacobi expansions.

\subsection{Geometry of Siegel modular varieties}

%We write $Y_g=\Sp{2g}(\Z)\bs \H_g$.
Let $X_g$ be a toroidal compactification of the Siegel modular variety
$Y_g=\Sp{2g}(\Z)\bs \H_g$ associated to an $\GL{g}(\Z)$-admissible
cone decomposition of the space of positive semi-definite symmetric
bilinear forms on $\R^g$, which is non-singular in the orbifold sense,
see e.g.~\cite{ash-etal-1975}, \cite{namikawa-1980} for details. We
write $\partial Y_g=X_g\setminus Y_g$ for the boundary divisor. There
is a natural map
\[
\pi :X_g\longrightarrow \overline{Y}_g
\]
to the Satake
compactification $\overline{Y}_g$ of $Y_g$.  The stratification of the
Satake boundary into Siegel modular varieties of lower genus induces a
stratification of $\partial Y_g$. For our argument, we will mainly need the boundary stratum
of genus $g-1$ (see Remark \ref{rem:ggeq3}).
For completeness we briefly recall its description, see
\cite{bruinier-van-der-geer-harder-zagier-2008}, Chapter~3.11.
%\cite{kramer-1995}, \cite{bruinier-van-der-geer-harder-zagier-2008}, Chapter~3.11  for details).
%  and let $X_g$ be its toroidal compactification
%(see \cite{Rappoport-et-al?} or \cite{bruinier-van-der-geer-harder-zagier-2008}, Chapter~3.11  for details).
%A partial toroidal compactification of $Y_g$ by a ``genus $g-1$ boundary stratum'' can be described as follows, see \cite{bruinier-van-der-geer-harder-zagier-2008}, Chapter~3.11.
%See also \cite{kramer-1995}.

We fix a splitting of $\tau\in \H_g$ as in \eqref{eq:tausplit} with $l=1$. For $c>0$ we consider the subset
\[
U_{g,c}=\left\{\tau=\zxz{\tau_1}{z}{\rT z}{\tau_2}\in \H_g:\; \Im(\tau_2) -\Im(\rT z)\Im(\tau_1)^{-1}\Im(z)>c\right\}
\]
of the Siegel upper half plane. The Klingen parabolic subgroup
\[
P=\left\{\begin{pmatrix} a & 0 & b & *\\
* & \pm 1 & * &*\\
c & 0 & d & *\\
0 & 0 & 0 & \pm 1\end{pmatrix}
\in \Sp{2g}(\Z) :\; \abcd\in \Sp{2(g-1)}(\Z)\right\}
\]
acts on $U_{g,c}$. If $c$ is sufficiently large, we obtain an embedding
\begin{align}
\label{eq:boundchart}
P\bs U_{g,c}\longrightarrow \Sp{2g}(\Z)\bs \H_g.
\end{align}
We consider the map
\[
U_{g,c}\longrightarrow \H_{g-1}\times \C^{g-1}\times \C^\times,\quad \tau\mapsto (\tau_1,z,q_2),
\]
where $q_2=e^{2\pi i\tau_2}$.
%The subgroup of $P$ of matrices of the form
%\[
%\begin{pmatrix} 1_{g-1} & 0 & 0 & 0 \\
%0 &  1 & 0 & b\\
%0 & 0 & 1_{g-1} & 0\\
%%\]
%with $b\in \Z$ acts trivially on the image
There is an induced action of $P$ on the image, which extends to $\H_{g-1}\times\C^{g-1}\times \C$.  It gives rise to a map
\begin{align}
\label{eq:partcomp}
P\bs U_{g,c} \longrightarrow P\bs (\H_{g-1}\times\C^{g-1}\times \C^\times),
\end{align}
which is biholomorphic onto its image. The quotient of the
boundary divisor $\H_{g-1}\times\C^{g-1}\times \{0\}$ by $P$ is given
by the universal principally polarized abelian variety
\begin{gather*}
\calX_{g-1}/\{\pm 1\}=\Sp{2(g-1)}(\Z)\ltimes \Z^{2(g-1)}\bs
(\H_{g-1}\times \C^{g-1})
\end{gather*}
of dimension $g-1$ modulo $\pm 1$.
We obtain a partial compactification of
$P\bs U_{g,c}$ by taking the closure of the image under the map \eqref{eq:partcomp}.
Glueing
this partial compactification onto $Y_g$ by means of \eqref{eq:boundchart}, we get
the partial compactification
\begin{align}
\label{eq:globalpartcomp}
X_g^{(1)} = Y_g\sqcup \calX_{g-1}/\{\pm 1\}.% \subset X_g.
\end{align}
The genus $g-1$ boundary stratum of $X_g$ is given by
$\calX_{g-1}/\{\pm 1\}$. The natural map from $\calX_{g-1}/\{\pm 1\}$
to the genus $g-1$ boundary stratum of the Satake compactification
$\overline{Y}_{g}$ is induced by the projection $\H_{g-1}\times
\C^{g-1}\times \{0\}\to \H_{g-1}$.

The local ring $\calO_{(\tau_1,z)}$ of $\Sp{2(g-1)}(\Z)\ltimes \Z^{2(g-1)}\bs
(\H_{g-1}\times \C^{g-1})$ at a point $(\tau_1,z)$ is given by the ring of invariants
$\CC\llbracket \td\tau_1 - \tau_1,\, \td z_1 - z_1 \rrbracket^G$ of the ring of convergent power series at $(\tau_1,z)$ under the action of the (finite) stabilizer
$G\subset \Sp{2(g-1)}(\Z)\ltimes \Z^{2(g-1)}$ of $(\tau_1,z)$.
The local ring $\calO_{(\tau_1,z,0)}$
of $X_g^{(1)}$ at a boundary point $(\tau_1,z,0)\in \calX_{g-1}$ is given
by the local ring of the quotient
$P\bs (\H_{g-1}\times\C^{g-1}\times \C)$ at $(\tau_1,z,0)$.
%If the boundary point is not an elliptic fixed point,
It is isomorphic to the ring of convergent power series $\calO_{(\tau_1,z)}\{q_2\}$ over
%the local ring
$\calO_{(\tau_1,z)}$.
% of $P\bs (\H_{g-1}\times\C^{g-1})$ at the point $(\tau_1,z)$.
The completion of $\calO_{(\tau_1,z,0)}$ at its maximal ideal is  the ring of formal power series
\begin{gather}
\label{eq:bndloc}
  \wht\calO_{(\tau_1,z,0)}
\cong
  \CC\llbracket \td\tau_1 - \tau_1,\, \td z_1 - z_1 \rrbracket^G
  \llbracket q_2 \rrbracket
\text{.}
\end{gather}
In particular, (formal) Fourier-Jacobi expansions of cogenus~$1$ define elements of these local rings.
Similarly, (formal) Fourier-Jacobi expansions of arbitrary cogenus~$l$
define elements of (completed) local rings of the genus $g-l$ boundary
stratum of $X_g$.

\begin{proposition}
\label{prop:effective-divisors-intersect-the-boundary-new}
Assume that $g\geq 2$, and let $D$ be a prime divisor on $X_g$.
Let $U\subset X_g$ be an open neighborhood of the boundary $\partial Y_g$.
Then $D\cap U$ is a non-trivial divisor on $U$.
\end{proposition}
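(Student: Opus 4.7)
The plan is to show that every prime divisor $D$ on $X_g$ meets $\partial Y_g$ itself; this suffices, since any open neighborhood $U$ of $\partial Y_g$ contains $\partial Y_g$, and $D \cap U$ is then either empty or a codimension-one closed subset of $U$, i.e.\ a non-trivial divisor.

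I would proceed via the proper birational morphism $\pi \colon X_g \to \ov{Y}_g$ to the Baily--Borel compactification, which is an isomorphism over $Y_g$ and contracts $\partial Y_g$ onto $\partial \ov{Y}_g$, a closed set of codimension $g \ge 2$.  If $D$ is $\pi$-exceptional, then $D \subset \pi^{-1}(\partial \ov{Y}_g) = \partial Y_g$ and we are done.  Otherwise I suppose for contradiction that $D \cap \partial Y_g = \emptyset$; then $D \subset Y_g$ and $\pi|_D$ is an isomorphism onto $\pi(D) \subset Y_g$, which in addition is closed in $\ov{Y}_g$ because $D$ is closed in the projective $X_g$.

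The key step is to realize $m D$ as the zero locus of a Siegel modular form.  Since $g \ge 2$, Freitag's classical theorem gives $\operatorname{Pic}(Y_g)_{\QQ} = \QQ \lambda$, with $\lambda$ the Hodge class.  Hence there exist positive integers $m, k$ with $m [D] \sim k \lambda$ in $\operatorname{Pic}(Y_g)_{\QQ}$; positivity of $k$ follows from $\lambda^{\dim \ov{Y}_g - 1} \cdot \pi(D) > 0$, using ampleness of $\lambda$ on $\ov{Y}_g$ and effectiveness of $\pi(D)$.  Thus $m D$ is the zero divisor of some holomorphic section $s$ of $\lambda^{\otimes k}$ on $Y_g$.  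By Koecher's principle (valid for $g \ge 2$), $s$ extends uniquely to a Siegel modular form $s \in \rmM^{(g)}_k$ on $\ov{Y}_g$.  Since $\partial \ov{Y}_g$ has codimension $\ge 2$, no irreducible component of the Weil divisor $\operatorname{div}_{\ov{Y}_g}(s)$ can lie in $\partial \ov{Y}_g$, so $\operatorname{div}_{\ov{Y}_g}(s) = m \pi(D) \subset Y_g$, and $s$ is nowhere vanishing on $\partial \ov{Y}_g$.

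The contradiction is then immediate.  The Siegel $\Phi$-image $\Phi(s) \in \rmM^{(g-1)}_k$ is a nowhere-vanishing section of $\lambda^{\otimes k}$ on the genus $g - 1$ Baily--Borel stratum $\ov{Y}_{g-1}$.  But for $g \ge 2$ the variety $\ov{Y}_{g-1}$ is positive-dimensional and $\lambda$ is ample on it, so $\lambda^{\otimes k}$ is a non-trivial line bundle and cannot admit a nowhere-vanishing section.  The main obstacle is setting up the Picard-group identification $\operatorname{Pic}(Y_g)_{\QQ} = \QQ \lambda$ and controlling $\operatorname{div}_{\ov{Y}_g}(s)$ at the normal singular boundary $\partial \ov{Y}_g$; both rest on classical facts (Freitag's theorem, Koecher's principle, and the codimension bound on $\partial \ov{Y}_g$) rather than on any new intersection-theoretic calculation.
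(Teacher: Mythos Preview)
Your proof is correct and follows essentially the same route as the paper's: reduce to producing a holomorphic Siegel modular form of positive weight whose divisor is a multiple of $D$ (using that the rational Picard group is generated by the Hodge class), and then observe via the Siegel $\Phi$-operator that such a form must vanish somewhere on the Satake boundary. The only cosmetic differences are that the paper works directly with $\Pic(\overline{Y}_g)_\QQ$ and the pushforward $\pi_*(D)$, whereas you work with $\Pic(Y_g)_\QQ$ and invoke Koecher's principle to extend; and the paper argues directly rather than by contradiction.
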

\begin{proof}

If $D$ is supported on the boundary $\partial Y_g$, we have nothing to show. So we assume that
$D$ is not supported on the boundary.

%Let $\overline{Y}_g$ be the Satake compactification of $Y_g$ and let  be the natural map.
By our assumption, the pushforward $D'=\pi^*(D)$ under the natural map $\pi:X_g\to \overline{Y}_g$ is a prime divisor on $\overline{Y}_g$.
We employ the fact that $\Pic(\overline{Y}_g) \otimes \QQ = \QQ \cL$, where $\cL$ is the class of the Hodge bundle
(see \cite{borel-wallach-1980}, \cite{grushevsky-2009}).
Hence there is a positive integer~$n$ and a holomorphic Siegel modular form~$f$ of weight $k>0$
such that $\div\, f = n D'$ on $\overline{Y}_g$.
% Note that $\Pic(X_g)_\Q$ has dimension $\geq 2$ since there are extra parts generated by boundary divisors. For instance, $\Pic(X_g^{(g)})_\Q=\Q\cL\oplus\Q boundary$.
The restriction
%$\Phi(f)$
of $f$ to the boundary of $\overline{Y}_g$, that is, the image of $f$ under the $\Phi$-operator,
is a Siegel modular form of weight $k$ and genus $g-1\geq 1$.
It must vanish at some point of the Satake boundary, and therefore
$D'$ has non-trivial intersection with the
Satake boundary.
Consequently, $D'\cap \pi(U)$ is a non-trivial divisor on $\pi(U)$.
This implies the assertion.
%
% Old:
\begin{comment}
We employ the fact that $\Pic(Y_g) \otimes \QQ = \QQ \cL$, where $\cL$ is the class of the Hodge bundle
(see \cite{borel-wallach-1980}, \cite{grushevsky-2009}).
Hence there is a positive integer~$n$ and a holomorphic Siegel modular form~$f$ of weight $k>0$
such that $\div\, f = n D$ on $Y_g$.
The pullback
of $f$ to the boundary of the Satake compactification $\overline{Y}_g$ of $Y_g$
is a Siegel modular form of weight $k$ and genus $g-1\geq 1$.
It must vanish at some point of the Satake boundary, and therefore $f$ must vanish at some
point of the Satake boundary.
Hence $D$ intersects
Since $Y_g$ is normal, $f$ must in fact vanish on a divisor in any open neighborhood $U'$ of the Satake boundary.
Consequently, $D\cap U'$ is a non-trivial divisor on $U'$. This implies the assertion.
\end{comment}
\end{proof}

\subsection{Algebraic relations of formal Fourier-Jacobi series}

We need the following result from commutative algebra.
\begin{proposition}
\label{prop:ca}
Let $A$ be a local integral domain, and let $\hat A $ be the completion of $A$.
If $A$ is henselian and excellent, then $A$ is algebraically closed in $\hat A$.
\end{proposition}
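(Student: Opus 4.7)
The plan is to derive this as a consequence of Artin's approximation theorem, which by the Popescu--Artin theorem applies to precisely the class of rings in the hypothesis. Recall the statement: for any system of polynomial equations with coefficients in an excellent henselian local ring $A$ and any solution in the completion $\hat A$, there exist solutions in $A$ approximating the formal solution to any prescribed order in the $\mathfrak{m}$-adic topology. I would invoke this as a black box; proving it is the classical hard input, due to Artin in the algebraic-geometric case and to Popescu in general.

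Given $\alpha \in \hat A$ algebraic over $A$, choose a non-zero $P(T) \in A[T]$ with $P(\alpha)=0$. By Artin approximation, for every $N \geq 1$ there is $\beta_N \in A$ with $P(\beta_N)=0$ and $\beta_N \equiv \alpha \pmod{\mathfrak{m}^N \hat A}$. The remaining step, which is the one I expect to be delicate, is to upgrade this approximation into equality $\beta_N = \alpha$ for $N$ large enough, which then forces $\alpha \in A$.

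To carry out this final step I would exploit two structural facts about $\hat A$. First, since $A$ is excellent, the map $A \to \hat A$ is regular, so reducedness of $A$ (which is a domain) is inherited by $\hat A$; in particular $\hat A$ is a reduced Noetherian local ring, with only finitely many minimal primes $\mathfrak p_1, \ldots, \mathfrak p_s$. A polynomial of degree $d$ has at most $d$ roots in each of the quotient fields $\operatorname{Frac}(\hat A / \mathfrak p_i)$, so $P$ admits only finitely many roots $\alpha = \alpha_1, \ldots, \alpha_r$ in $\hat A$. Second, by Krull's intersection theorem in the Noetherian local ring $\hat A$, the non-zero differences $\alpha_i - \alpha_j$ are not contained in $\mathfrak{m}^N \hat A$ once $N$ is sufficiently large. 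Then $\beta_N$, as a root of $P$ in $\hat A$ that is congruent to $\alpha$ modulo $\mathfrak{m}^N$, must coincide with $\alpha$, whence $\alpha \in A$. In the paper, since the result is entirely standard, one might simply cite a reference such as Raynaud's monograph on henselian local rings or \textsc{EGA} IV in place of this sketch.
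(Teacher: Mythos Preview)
Your argument is correct. The finiteness of roots of $P$ in $\hat A$ via the embedding into the product of residue fields at minimal primes, combined with Krull's intersection theorem to separate them $\mathfrak{m}$-adically, cleanly closes the gap after Artin approximation.

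The paper takes a different, shorter route: it simply invokes the characterization that for an excellent local integral domain $A$, the henselization $A^h$ coincides with the algebraic closure of $A$ inside $\hat A$ (citing Freitag--Kiehl and the Stacks Project). Since $A$ is already assumed henselian, $A = A^h$, and the statement follows in one line. Your approach via Artin--Popescu approximation is more hands-on and makes the role of each hypothesis transparent: excellence gives regularity of $A \to \hat A$ (hence reducedness of $\hat A$) and, via Popescu, the approximation property; the henselian hypothesis is what makes approximation available in $A$ itself rather than only in $A^h$. The two arguments are of comparable depth, since the characterization of $A^h$ as the algebraic closure in $\hat A$ is itself typically established by approximation-type methods; the paper just packages this as a citation, whereas you unpack the mechanism.
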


\begin{proof}
If $A$ is an excellent local integral domain (not necessarily henselian), then its henselization can be described as the algebraic closure of $A$ in $\hat A$, see e.g.~\cite{freitag-kiehl-1988} pp.~16, or \cite{stacks-project} Example~16.13.3. This implies the assertion.
\end{proof}

\begin{lemma}
\label{la:local-convergence-of-formal-fourier-jacobi-expansions}
Let $Q=\sum_{i=0}^d a_i X^i\in \rmM_\bullet^{(g)}[X]$ be a nonzero polynomial of degree $d$ with coefficients $a_i\in \rmM_{k_0+(d-i)k}$, and let
\begin{gather*}
f=\sum_{m}\phi_m(\tau_1,z)q_2^m\in \rmF\rmM^{(g)}_k
\end{gather*}
be a formal Fourier-Jacobi expansion of cogenus~$1$ such that $Q(f)=0$. Then $f$ converges absolutely in an open neighborhood of the boundary divisor of $X_g$ and defines a holomorphic function there.
\end{lemma}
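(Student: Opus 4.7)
The plan is to work in the analytic local ring at an arbitrary boundary point of the partial compactification $X_g^{(1)}$ from~\eqref{eq:globalpartcomp}, interpret $f$ as an element of its completion, invoke Proposition~\ref{prop:ca} to promote the relation $Q(f)=0$ into the statement that $f$ already lies in the local ring itself, and finally propagate the resulting convergence over the whole boundary stratum.

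Concretely, I fix a point $x = (\tau_1^0, z^0, 0) \in \calX_{g-1}/\{\pm 1\}$ and consider the analytic local ring $A = \calO_{X_g^{(1)}, x}$. By the description preceding~\eqref{eq:bndloc}, $A$ is obtained by taking the $G$-invariants of a ring of convergent power series in the coordinates on $\HS_{g-1}\times \CC^{g-1}$ at $(\tau_1^0, z^0)$ (where $G$ is the finite stabiliser), and then adjoining the convergent variable $q_2$. This is a local integral domain that is both henselian and excellent, with completion $\hat A$ as in~\eqref{eq:bndloc}. I then choose a local holomorphic nowhere-vanishing section $\sigma$ of the Hodge bundle near $x$, which lets me trivialise weights: for any Siegel modular form $a$ of weight $w$, the ratio $a/\sigma^w$ lies in $A$, and it is nonzero there whenever $a\ne 0$, by the connectedness of $Y_g$. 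Setting $\tilde a_i := a_i/\sigma^{k_0+(d-i)k}$ and $\tilde f := f/\sigma^k$, the element $\tilde f$ lies in $\hat A$: each coefficient $\phi_m/\sigma^k$ has a convergent Taylor expansion at $(\tau_1^0, z^0)$, and the formal sum of these against $q_2^m$ yields an element of~\eqref{eq:bndloc}.

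The identity $Q(f)=0$ now becomes $\sum_{i=0}^d \tilde a_i\, \tilde f^{\,i} = 0$ in $\hat A$, a nontrivial polynomial equation over $A$ satisfied by $\tilde f$. Proposition~\ref{prop:ca} forces $\tilde f \in A$, so $f$ converges absolutely and defines a holomorphic section of the weight $k$ automorphic line bundle on some open neighborhood $U_x$ of $x$. Since $x$ was arbitrary in the boundary stratum $\calX_{g-1}/\{\pm 1\}$, the union $U = \bigcup_x U_x$ is the open neighborhood of the boundary divisor claimed in the statement; the holomorphic function it carries automatically agrees with $f$ because it does so at the level of formal expansions in $\hat A$.

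The main obstacle is the commutative-algebra input in the first step: one must verify that the analytic local ring $A$ at an orbifold boundary point is henselian, excellent, and an integral domain. This rests on the standard facts that these three properties are preserved under (i) taking invariants by a finite group action and (ii) adjoining a convergent power-series variable, both of which apply here. A secondary subtlety is that the polynomial $\sum_i \tilde a_i X^i$ must remain nonzero in $A[X]$, which follows because the original $a_i$ are global sections on the irreducible variety $Y_g$.
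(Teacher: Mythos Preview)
Your argument is essentially the one in the paper: view $f$ as an element of the completed local ring at a boundary point, invoke Proposition~\ref{prop:ca} via the henselian/excellent property of analytic local rings, and vary the point. Your explicit trivialisation by a local frame $\sigma$ of the Hodge bundle is a useful clarification that the paper suppresses when it simply asserts that $Q$ defines a polynomial in $\calO_{(\tau_1,z,0)}[X]$.

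There is, however, one genuine omission. You let $x$ range only over the genus $g-1$ stratum $\calX_{g-1}/\{\pm 1\}$ of the partial compactification $X_g^{(1)}$, and then assert that $\bigcup_x U_x$ is an open neighborhood of the full boundary divisor $\partial Y_g$ of the toroidal compactification $X_g$. This is not justified: the deeper boundary strata of $X_g$ (those lying over $Y_{g'}$ for $g'<g-1$ in the Satake compactification) are not contained in $X_g^{(1)}$, and an open neighborhood of a dense stratum need not be a neighborhood of its closure. The paper closes this gap with the sentence ``The same argument applies to the boundary strata of smaller genus'': at a point of a deeper stratum the analytic local ring is again henselian and excellent, and the formal Fourier--Jacobi series still defines an element of its completion (cf.\ the remark at the end of the paragraph preceding~\eqref{eq:bndloc}), so Proposition~\ref{prop:ca} applies there as well. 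You should add this step; note in particular that for $g=2$ it is needed for the downstream application, as Remark~\ref{rem:ggeq3} makes clear.
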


\begin{proof}
Let $(\tau_1,z,0)\in X_g^{(1)}$ be a boundary point
%which is not an elliptic fixed point
as in \eqref{eq:bndloc}. The polynomial $Q$ defines a polynomial
in $\calO_{(\tau_1,z,0)}[X]$, and $f$ defines an element of $\hat\calO_{(\tau_1,z,0)}$ which is algebraic over $\calO_{(\tau_1,z,0)}$ by hypothesis.
The local ring $\calO_{(\tau_1,z,0)}$ is henselian (a consequence of the Weierstrass preparation theorem)
and excellent (see e.g.~\cite{matsumura-1980} Theorem~102). Hence Proposition~\ref{prop:ca} implies that $f$ converges in a neighborhood of $(\tau_1,z,0)$. Varying the boundary point,
%and using the fact that the set of elliptic fixed points is discrete
we find that $f$ converges in a neighborhood of the whole
boundary stratum of $X_g^{(1)}$. The same argument applies to the boundary strata of smaller genus.  This proves the proposition.

\end{proof}

\begin{lemma}
\label{lem:coversplit}
Let $W\subset \C^N$ be a domain and let $Q(\tau,X)\in \calO(W)[X]$ be a monic irreducible polynomial with discriminant $\Delta_Q\in \calO(W)$.
Let $V\subset W$ be an open subset that has non-trivial intersection with every irreducible component of the divisor $D=\div(\Delta_Q)$.
%such that $V\cap D_0\neq \emptyset $ for every irreducible component $D_0$ of the divisor $D=\div(\Delta_Q)$.
If $f$ is a holomorphic function on $V$ satisfying $Q(\tau,f(\tau))=0$ on $V$, then $f$ has a holomorphic continuation to $W$.
\end{lemma}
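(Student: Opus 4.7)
The plan is to interpret $Q$ as defining a finite branched cover $\pi\colon Z\to W$ of degree $d$, where $Z\subset W\times\C$ is the zero locus of $Q$, unramified precisely over $W^0:=W\setminus D$, and to exploit the section of $\pi$ over $V$ provided by $f$ together with a monodromy argument to force $d=1$. Once $d=1$, we have $Q(\tau,X)=X-g(\tau)$ for a uniquely determined $g\in\calO(W)$, and $f=g|_V$ extends trivially to $g$ on all of $W$. Alternatively, if one works with the weaker conclusion that the stabilizer $H$ defined below coincides with the full monodromy group $G$, then the sheet of $f$ extends to a global section of $\pi$ over $W^0$, and Riemann's removable singularity theorem completes the extension across $D$, since the roots of the monic polynomial $Q$ are locally bounded.

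The first step is to show that the étale cover $Z^0:=\pi^{-1}(W^0)\to W^0$ is connected. A clopen decomposition $Z^0=Z_1\sqcup Z_2$ would produce, via elementary symmetric polynomials in the sheets, a factorization $Q|_{W^0}=Q_1Q_2$ with $Q_j\in\calO(W^0)[X]$; the coefficients of $Q_j$ are locally bounded near $D$, being symmetric functions in the bounded roots of a monic polynomial, hence extend to $\calO(W)[X]$ by Riemann's theorem, contradicting irreducibility of $Q$. Consequently the monodromy group $G$ of $Z^0\to W^0$ acts transitively on each fiber of size $d$.

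Next, I would run a standard monodromy argument. Fix $\tau_0\in V\cap W^0$ and let $H\leq G$ be the stabilizer of $f(\tau_0)$ in the fiber; since $f$ is single-valued and holomorphic on $V$, every loop based at $\tau_0$ contained in $V\cap W^0$ has monodromy class in $H$. For each irreducible component $D_i$ of $D$, the hypothesis $V\cap D_i\neq\emptyset$ combined with openness of $V$ and the density of smooth points in $D_i$ yields a smooth point $p_i\in V\cap D_i$ together with a polydisk $U_i\subset V$ around $p_i$ meeting $D$ only transversally in $D_i$. A small loop $\gamma_i\subset U_i\cap W^0$ encircling $D_i$ then represents a local monodromy generator around $D_i$, and by the previous sentence this generator lies in $H$. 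Invoking the classical fact that the monodromy of a finite branched cover is generated by such local monodromies around the components of the branch divisor, we conclude $H=G$; transitivity of $G$ on the $d$-element fiber then forces $d=1$ by orbit-stabilizer.

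The delicate point I expect is precisely this generation of $G$ by local monodromies around the components of $D$. It is standard when $W$ is simply connected and the components of $D$ have sufficiently well-behaved smooth locus, but can fail if $\pi_1(W)$ contributes extra monodromy. For the application of the lemma in the paper one reduces to the simply connected situation by working in a contractible local chart of the (toroidally compactified) Siegel modular variety near its boundary stratum, or equivalently by lifting to the simply connected Siegel upper half space, where the generation statement is routine.
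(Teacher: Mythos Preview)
Your approach via the monodromy representation is genuinely different from the paper's. The paper does not work with the stabilizer $H$ of $f(\tau_0)$ at all; instead it first argues that $Q$ splits completely into linear factors over $V$ (using the deck transformation group of the cover acting on the section $s(\tau)=(\tau,f(\tau))$), then propagates this local splitting to every point of $W$ by a clopen argument along the smooth locus $D^{\mathrm{reg}}$ of the branch divisor (using the local winding-cover structure from Grauert--Remmert), handles the singular locus of $D$ via Hartogs, and finally concludes that the unbranched cover decomposes into $d$ sheets, one of which continues $f$. Your observation that this forces $d=1$ once $Q$ is irreducible is correct, and both arguments implicitly need $W$ simply connected (which holds in the application $W=\H_g$); you are right to flag that point.

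There is, however, a gap in your monodromy step that goes beyond the simply-connectedness issue. You show that for each component $D_i$ the \emph{particular} meridian $\gamma_i$ built from a connecting path inside $V$ has image in $H$. But the ``classical fact'' you invoke only gives that $\pi_1(W^0)$ is \emph{normally} generated by meridians: meridians around the same component $D_i$ with different connecting paths to $\tau_0$ form a full conjugacy class in $\pi_1(W^0)$, and it is these conjugacy classes that generate. From $\gamma_i\in H$ you get $g\gamma_i g^{-1}\in gHg^{-1}$, which lies in $H$ only when $H\trianglelefteq G$, i.e.\ only when the cover is already Galois. So $H=G$ does not follow from what you have established. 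This is exactly the obstacle the paper's route sidesteps: by showing $Q$ splits into linear factors locally at \emph{every} point of $W$, including across $D$, the paper makes the full cover $Z\to W$ unramified; triviality over a simply connected base is then automatic, with no need to identify explicit generators of $G$.
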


\begin{proof}
Let $\tilde W=\{(\tau,X)\in W\times \C:\; Q(\tau, X)=0\}$ be the
analytic hypersurface defined by $Q$. The projection to the first
coordinate defines a branched covering
\begin{gather*}
p_1:\tilde W\longrightarrow W
\end{gather*}
of degree $\deg(Q)$.
The branching locus in $W$ is the divisor $D$.
Since $Q$ is irreducible, the corresponding unbranched cover $\tilde W\setminus p_1^{-1}(D)\to W\setminus D$ is connected and the automorphism group $\Aut(\tilde W/W)$ of the covering acts transitively on the fibers.

Over the open subset $V\subset W$ the map $s: V\to \tilde W, \tau\mapsto (\tau,f(\tau))$ defines a holomorphic section, and using the projection $p_2: \tilde W\to \C$ to the second factor, we have
$f= p_2\circ s$. For every $\sigma\in \Aut(\tilde W/W)$, the composition
\begin{gather*}
f_\sigma=p_2\circ \sigma \circ s
\end{gather*}
defines a holomorphic function on $V$ satisfying $Q(\tau,f_\sigma(\tau))=0$. Since $\Aut(\tilde W/W)$ acts transitively on the fibers, we find that $Q(\tau,X)$ splits completely into linear factors over $V$.

We now show that for every $a\in W$ the localized polynomial $Q_a(\tau,X)\in \calO_a[X]$ with coefficients in the local ring at $a$ splits completely into linear factors. We have just shown this for all $a\in V$. Moreover, it is  clear for $a\in W\setminus D$ by the theorem of implicit functions.
Next we show it for all points in the regular locus $D^{reg}$ of the branch divisor $D$.

Let $D_0$ be an irreducible component of $D^{reg}$ and let $a\in D_0$. Choosing holomorphic coordinates appropriately, we may assume that there is a small polycylinder $U\subset W$ around $a$ in which $D_0\cap U=D\cap U=\{\tau\in U:\; \tau_1=0\}$. Then according to \cite{grauert-remmert-1958},
Satz~10 and Hilfssatz~2 in \S2.5, every irreducible component of $\tilde W\cap p_1^{-1}(U)$ is a winding covering, that is, isomorphic to a covering of the form $\{(\tau,X):\; X^c-\tau_1=0\}$, where $c$ is the covering degree. This implies that $Q_a(\tau,X)\in \calO_a[X]$ factors into linear factors if and only if
$Q_b(\tau,X)\in \calO_b[X]$ factors into linear factors for all $b$ in a full open neighborhood of $a$ in $D_0$.
Hence
\begin{align*}
U_1 &= \{ a\in D_0 :\; \text{$Q_a(\tau,X)$ decomposes into linear factors in $\calO_a[X]$}\},\\
U_2 &= \{ a\in D_0 :\; \text{$Q_a(\tau,X)$ does not decompose into linear factors in $\calO_a[X]$}\}
\end{align*}
are disjoint {\em open} subsets of $D_0 $ whose union is $D_0$.
Since $D_0 $ is connected, and since $U_1\cap V\neq \emptyset$ we find that
$U_1=D_0 $.

Since the singular locus of $D$ is a closed analytic subset of $W$ of codimension $\geq 2$, Hartogs' Theorem (see e.g.~Theorem 2.5 in Chapter VI of \cite{fritsche-grauert-2002}) implies that
$Q_a(\tau,X)$ splits completely into linear factors for all $a\in W$.
Therefore, by  \cite{fritsche-grauert-2002}, Proposition 4.10 in Chapter III, $\tilde W\setminus p_1^{-1}(D)$ decomposes into $d$ connected components which are biholomorphically mapped onto $W\setminus D$ by $p_1$. Thereby we obtain the desired continuation of $f$ to a holomorphic function on $W\setminus D$ solving the polynomial $Q(\tau,X)$. Since the continuation is locally bounded, it extends to all of $W$.
\end{proof}

\begin{theorem}
\label{thm:establish-holomorphicity}
Let $Q=\sum_{i=0}^d a_i X^i\in \rmM_\bullet^{(g)}[X]$ be a nonzero polynomial of degree $d$ with coefficients
$a_i\in \rmM_{k_0+(d-i)k}$, and let
\begin{gather*}
  f
=
  \sum_{m} \phi_m(\tau_1,z)\,
  q_2^m
\in
  \rmF\rmM^{(g)}_k
\end{gather*}
be a formal Fourier-Jacobi expansion of cogenus~$1$ such that $Q(f)=0$. Then $f$ converges absolutely on $\HS_g$
and defines an element of  $\rmM_k^{(g)}$.
\end{theorem}

\begin{proof}
Without loss of generality we may assume that $Q$ is irreducible.
First, we assume that $Q$ is also monic (and therefore $k_0=0$).
%As before, we consider the toroidal compactification~$X_g$ of $Y_g$.
According to Lemma~\ref{la:local-convergence-of-formal-fourier-jacobi-expansions} there exists an open
neighborhood $U\subset X_g$ of the boundary divisor $\partial
Y_g\subset X_g$ on which $f$ converges absolutely. Hence $f$ defines a
holomorphic function of the inverse image $V\subset\H_g$ of $U$ under the natural
map $\H_g \to X_g$.

The discriminant $\Delta_Q$ of $Q$ is a holomorphic Siegel modular form of weight $d(d-1)k$.
According to Proposition~\ref{prop:effective-divisors-intersect-the-boundary-new}
every irreducible component of $D=\div \Delta_Q$ has non-trivial intersection with
$V$. Employing Lemma \ref{lem:coversplit}, we find that
$f$ has a holomorphic continuation to all of $\H_g$.

If the polynomial $Q$ has leading coefficient $a_d$ not equal to $1$,
then by a standard argument there is a monic polynomial $R\in
\rmM_\bullet^{(g)}[X]$ of degree $d$ such that $R(a_d\cdot f)=0$.
Replacing in the above argument $f$ by $h=a_d\cdot f$, we see that $h$
has a holomorphic continuation to $\H_g$. Therefore $f$ is a
meromorphic Siegel modular form which is holomorphic on $V$.  By
%Lemma~\ref{la:local-convergence-of-formal-fourier-jacobi-expansions} and
Proposition~\ref{prop:effective-divisors-intersect-the-boundary-new}, its
polar divisor must be trivial, and therefore $f$ is in fact
holomorphic on $\H_g$.

This implies that the formal Fourier-Jacobi expansion of $f$ converges on all of $\H_g$.
Since $\Mp{2g}(\Z)$ is generated by the
embedded Jacobi group $\widetilde{\Gamma}^{(g-1,1)}$ and the
embedded group $\GL{g}(\Z)$, we find that $f\in \rmM_k^{(g)}$.
\end{proof}

\begin{remark}
\label{rem:ggeq3}
If $g\geq 3$ then in the above proof the open neighborhood $U$ of the boundary $\partial Y_g$ can be replaced by an open neighborhood $U$ of the boundary of the partial compactification
$X_g^{(1)} = Y_g\sqcup \calX_{g-1}/\{\pm 1\}$. In fact, an inspection of the proof shows that the crucial point is that the analogue of Proposition~\ref{prop:effective-divisors-intersect-the-boundary-new} must hold. This follows from the fact that a holomorphic Siegel modular form of genus $g$ of positive weight vanishes at some point of the genus $g-1$ boundary stratum of the Satake compactification, since the Satake compactification is normal.
\end{remark}

\begin{corollary}
\label{cor:free-algebra-of-formal-fourier-jacobi-expansions}
The graded ring $\rmM^{(g)}_\bullet$ is algebraically closed in the graded ring $\rmF\rmM^{(g)}_\bullet$.
\end{corollary}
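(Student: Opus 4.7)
The plan is to reduce the corollary to the homogeneous-relation case already handled by Theorem~\ref{thm:establish-holmorphicity}, using the fact that both $\rmM^{(g)}_\bullet$ and $\rmF\rmM^{(g)}_\bullet$ carry a natural weight-grading compatible with the algebra structure of Proposition~\ref{prop:module-and-algebra-structure-for-formal-fourier-jacobi-expansions}. The core point is that Theorem~\ref{thm:establish-holmorphicity} already guarantees $f\in\rmM^{(g)}_k$ whenever a homogeneous $f\in\rmF\rmM^{(g)}_k$ satisfies a weight-homogeneous polynomial relation of the specified type, so all that remains is to manufacture such a relation from an arbitrary one.

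First I would take $f\in\rmF\rmM^{(g)}_\bullet$ algebraic over $\rmM^{(g)}_\bullet$, write $f=\sum_k f_k$ with $f_k\in\rmF\rmM^{(g)}_k$ (a \emph{finite} sum, since we are working in a direct sum), and reduce to showing that each homogeneous component $f_k$ lies in $\rmM^{(g)}_k$. This reduction uses the standard fact that if a graded subring sits inside a graded overring, then the set of elements algebraic over it is graded — concretely, given any polynomial relation satisfied by $f$ one extracts a polynomial relation for each homogeneous component by decomposing both the relation and $f$ into weight-homogeneous pieces and equating pieces of equal weight on the left-hand side.

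Next, for a fixed homogeneous $f\in\rmF\rmM^{(g)}_k$ with $P(f)=0$ for some nonzero $P=\sum_i b_i X^i\in\rmM^{(g)}_\bullet[X]$, I would further decompose each $b_i=\sum_j b_{i,j}$ into its weight-$j$ components $b_{i,j}\in\rmM^{(g)}_j$. Since $f^i\in\rmF\rmM^{(g)}_{ik}$ is homogeneous of weight $ik$, the identity $P(f)=0$ splits, in the direct sum $\rmF\rmM^{(g)}_\bullet=\bigoplus_w \rmF\rmM^{(g)}_w$, into the collection of relations
\[
\sum_i b_{i,\,w-ik}\, f^i = 0, \qquad w\in\tfrac12\ZZ.
\]
Because $P\neq 0$ and $\rmM^{(g)}_\bullet\hookrightarrow\rmF\rmM^{(g)}_\bullet$, at least one of these weight-homogeneous relations is nonzero, and writing it as $Q(f)=0$ with $Q=\sum_{i=0}^d a_i X^i$, $a_i=b_{i,\,w-ik}\in\rmM^{(g)}_{k_0+(d-i)k}$ for $k_0:=w-dk$, we are exactly in the situation of Theorem~\ref{thm:establish-holmorphicity}. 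That theorem then yields $f\in\rmM^{(g)}_k$.

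Assembling the pieces, every homogeneous component $f_k$ of the original $f$ lies in $\rmM^{(g)}_k$, hence $f=\sum_k f_k\in\rmM^{(g)}_\bullet$ as desired. The only step requiring any genuine care is the extraction of a nonzero weight-homogeneous sub-relation from $P(f)=0$, but this is routine graded algebra; the analytic substance of the corollary is entirely contained in Theorem~\ref{thm:establish-holmorphicity}, so no new obstacle arises here.
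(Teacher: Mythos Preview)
Your argument is correct and is exactly the routine graded-algebra unpacking the paper leaves implicit: the paper states the corollary with no proof, treating it as immediate from Theorem~\ref{thm:establish-holmorphicity}, and your second paragraph (extracting a weight-homogeneous sub-relation from $P(f)=0$ when $f$ is homogeneous, then invoking the theorem) is precisely the intended content.

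One small correction, though. Your concrete justification of the ``standard fact'' that the algebraic closure of a graded subring is graded does not work as written. If $f=f_{k_1}+f_{k_2}$ with $k_1\ne k_2$ and $P(f)=0$, then expanding and collecting by weight produces relations involving mixed products such as $f_{k_1}f_{k_2}$, not univariate polynomial relations in a single $f_{k_i}$. The fact you cite is nevertheless true over a domain: one clean proof uses the grading automorphisms $\sigma_\lambda$ (multiplying weight-$w$ elements by $\lambda^{w}$), notes that each $\sigma_\lambda(f)$ is again algebraic over $\rmM^{(g)}_\bullet$, and recovers the individual $f_k$ as $\CC$-linear combinations of finitely many $\sigma_\lambda(f)$ via a Vandermonde determinant. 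Alternatively, since the corollary is only applied in Theorem~\ref{thm:rigidity-for-classical-forms-cogenus-1} to homogeneous $f\in\rmF\rmM^{(g)}_k$, you may simply drop the first reduction step; your second paragraph already handles that case completely.
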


\section{Modularity of Symmetric Formal Fourier-Jacobi Series}
\label{sec:rigidity-scalar-valued}

We start this section with a direct consequence of Corollary~\ref{cor:free-algebra-of-formal-fourier-jacobi-expansions} and Theorem~\ref{thm:asymptotic-dimension-for-formal-fourier-jacobi-expansions}.
\begin{theorem}
\label{thm:rigidity-for-classical-forms-cogenus-1}
For any $g \ge 2$, we have
\begin{gather*}
  \rmF\rmM^{(g)}_\bullet
=
  \rmM^{(g)}_\bullet
\text{.}
\end{gather*}
\end{theorem}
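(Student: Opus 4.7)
The plan is to combine Corollary~\ref{cor:free-algebra-of-formal-fourier-jacobi-expansions} (algebraic closedness) with the asymptotic dimension bound of Theorem~\ref{thm:asymptotic-dimension-for-formal-fourier-jacobi-expansions} via a Hilbert-series argument. The inclusion $\rmM^{(g)}_\bullet \subseteq \rmF\rmM^{(g)}_\bullet$ is clear. It therefore suffices to show the reverse: any homogeneous $f\in\rmF\rmM^{(g)}_k$ belongs to $\rmM^{(g)}_k$.

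The key observation is that if some $f \in \rmF\rmM^{(g)}_k \setminus \rmM^{(g)}_\bullet$ existed, then by Corollary~\ref{cor:free-algebra-of-formal-fourier-jacobi-expansions} it would have to be transcendental over $\rmM^{(g)}_\bullet$. I would argue that this is incompatible with the dimension estimates already established. For this, I need a lower bound on $\dim\rmM^{(g)}_N$: it is classical that for $g\ge 2$ the graded ring $\rmM^{(g)}_\bullet$ has transcendence degree $d+1$ over $\CC$, where $d=g(g+1)/2$, so one can fix algebraically independent Siegel modular forms $f_1,\ldots,f_{d+1}\in\rmM^{(g)}_\bullet$ of positive weights $w_1,\ldots,w_{d+1}$; counting the monomials in the $f_i$ of weighted degree $N$ yields a standard lower bound $\dim\rmM^{(g)}_N \gg N^d$.

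If $f$ were transcendental over $\rmM^{(g)}_\bullet$, then the monomials
\begin{gather*}
  f_1^{a_1}\cdots f_{d+1}^{a_{d+1}}\, f^b,
\qquad
  a_1 w_1 + \cdots + a_{d+1} w_{d+1} + b k = N,
\end{gather*}
would all be linearly independent inside $\rmF\rmM^{(g)}_N$. The number of such tuples grows like $N^{d+1}$, so one would obtain $\dim\rmF\rmM^{(g)}_N \gg N^{d+1}$. This contradicts the upper bound $\dim\rmF\rmM^{(g)}_N \ll_g N^d$ from Theorem~\ref{thm:asymptotic-dimension-for-formal-fourier-jacobi-expansions} as soon as $N$ is sufficiently large. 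Hence $f$ must be algebraic over $\rmM^{(g)}_\bullet$, and Corollary~\ref{cor:free-algebra-of-formal-fourier-jacobi-expansions} then forces $f\in\rmM^{(g)}_\bullet$; homogeneity in weight $k$ gives $f\in\rmM^{(g)}_k$.

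Since all non-trivial input (algebraic closedness, transcendence degree of $\rmM^{(g)}_\bullet$, and the polynomial dimension bounds) is already in place, no real obstacle remains: the argument is essentially a comparison of Hilbert functions. The only point requiring a bit of attention is ensuring that the lower bound on $\dim\rmM^{(g)}_N$ is genuinely of order $N^d$ with the correct exponent matching the upper bound for $\rmF\rmM^{(g)}_N$, so that transcendence of $f$ produces a contradiction strictly in the leading order of $N$.
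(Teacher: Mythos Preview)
Your proof is correct and takes essentially the same approach as the paper: both combine the asymptotic dimension bound of Theorem~\ref{thm:asymptotic-dimension-for-formal-fourier-jacobi-expansions} with the algebraic closedness of Corollary~\ref{cor:free-algebra-of-formal-fourier-jacobi-expansions}. The paper phrases the key step as ``the dimension bound forces $\rmF\rmM^{(g)}_\bullet$ to have finite rank over $\rmM^{(g)}_\bullet$, hence every element is algebraic'', while you argue the contrapositive, that a transcendental element would yield $\dim\rmF\rmM^{(g)}_N \gg N^{d+1}$; these are two sides of the same Hilbert-function comparison, and your version in fact spells out what the paper's one-line claim of finite rank implicitly rests on.
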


\begin{proof}
Theorem~\ref{thm:asymptotic-dimension-for-formal-fourier-jacobi-expansions} shows that $\rmF\rmM^{(g)}_\bullet$ has finite rank as a graded $\rmM^{(g)}_\bullet$ module. Hence every element $f\in \rmF\rmM^{(g)}_k$ satisfies a non-trivial algebraic relation as in Theorem \ref{thm:establish-holomorphicity}, which then implies that $f$ belongs to $\rmM^{(g)}_\bullet$.
\end{proof}

To extend this to symmetric formal Fourier-Jacobi series of arbitrary type and cogenus~$1 \le l <g$, we apply induction on~$g$.  Our Main Theorem is a consequence of both of Lemma~\ref{la:rigidity-for-all-cogenera} and Lemma~\ref{la:rigidity-for-all-types}.

\begin{lemma}
\label{la:rigidity-for-all-cogenera}
Fix $g \ge 3$, and assume that $\rmF\rmM^{(g')}_\bullet(\rho) = \rmM^{(g')}_\bullet(\rho)$ holds for all $2 \le g' < g$ and for all representations~$\rho$ of $\Mp{2g'}(\ZZ)$ with finite index kernel.  Then we have $\rmF\rmM^{(g,l)}_\bullet = \rmM^{(g)}_\bullet$ for all $1 \le l < g$.
\end{lemma}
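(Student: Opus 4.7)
The plan is a downward induction on the cogenus $l$, with base case $l = 1$ supplied by Theorem~\ref{thm:rigidity-for-classical-forms-cogenus-1}. For the inductive step, let $2 \le l \le g-1$ and take $f \in \rmF\rmM^{(g,l)}_\bullet$; it suffices to show that every cogenus-$(l-1)$ formal Fourier-Jacobi coefficient $\psi_{m'}$ of $f$ defined by~\eqref{eq:def:fourier-jacobi-coefficients-of-formal-expansions} for $0 \le m' \in \MatT{l-1}(\QQ)$ is an honest Siegel-Jacobi form of genus $g-l+1$, weight $k$, and index $m'$. This places $f$ in $\rmF\rmM^{(g,l-1)}_\bullet$, which equals $\rmM^{(g)}_\bullet$ by the inductive hypothesis.

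For $m'$ positive definite, Lemma~\ref{la:formal-theta-expansion} applied with $l' = l-1$ provides a formal theta decomposition $\psi_{m'} = \sum_{\mu'} h_{m', \mu'}\, \theta^{(g-l+1)}_{m', \mu'}$ with $(h_{m', \mu'})_{\mu'} \in \rmF\rmM^{(g-l+1)}_{k-(l-1)/2}(\rho^{(g-l+1)}_{m'})$. Since $g \ge 3$ and $2 \le l \le g-1$, we have $2 \le g-l+1 \le g-1$, and the Weil representation $\rho^{(g-l+1)}_{m'}$ factors through a finite quotient of $\Mp{2(g-l+1)}(\ZZ)$. The hypothesis of the lemma thus upgrades $(h_{m', \mu'})_{\mu'}$ to an honest vector-valued Siegel modular form, so the reassembled $\psi_{m'}$ is an honest Jacobi form.

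For singular $m'$, I would exploit the $\GL{l-1}(\ZZ)$-symmetry of $f$ coming from $\diag(1^{(g-l+1)}, u) \in \GL{g}(\ZZ)$ to reduce, without loss of generality, to $m' = \left(\begin{smallmatrix} m'' & 0 \\ 0 & 0 \end{smallmatrix}\right)$ with $m'' \in \MatT{r}(\QQ)$ positive definite of rank $0 \le r < l-1$. When $r \ge 1$, applying the positive-definite case already handled at cogenus~$r$ (valid since $2 \le g-r \le g-1$) shows that the cogenus-$r$ coefficient $\psi^{(r)}_{m''}$ is an honest Jacobi form of genus $g-r$ and index $m''$. The positive semi-definiteness constraint in~\eqref{eq:def:fourier-jacobi-coefficients-of-formal-expansions} together with the Koecher principle for the honest singular-index Jacobi forms $\phi_{\left(\begin{smallmatrix} N & 0 \\ 0 & 0 \end{smallmatrix}\right)}$ then identifies $\psi^{(l-1)}_{m'}$ with the trivial extension in the nullspace direction of the constant cogenus-$(l-1-r)$ Fourier-Jacobi coefficient of the honest Jacobi form $\psi^{(r)}_{m''}$; this is itself an honest Jacobi form of genus $g-l+1$ and index $m'$. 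The remaining case $r = 0$ proceeds analogously but simpler: the positive semi-definiteness and Koecher reductions show that $\psi_0$ is independent of~$z$ and defines a cogenus-$1$ symmetric formal Fourier-Jacobi series on $\HS_{g-l+1}$, so Theorem~\ref{thm:rigidity-for-classical-forms-cogenus-1} applied at genus $g-l+1 \ge 2$ yields $\psi_0 \in \rmM^{(g-l+1)}_k$.

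The main obstacle will be the intermediate rank case $0 < r < l-1$: one has to verify that the Koecher vanishing on the cogenus-$l$ coefficients $\phi_m$ of $f$ meshes cleanly with the reorganization into the cogenus-$(l-1)$ expansion, and that the resulting object coincides with the prescribed Fourier-Jacobi coefficient of $\psi^{(r)}_{m''}$ after restricting to the smaller Jacobi upper half space. The positive-definite case and the $m' = 0$ case rely only on the tools already developed in Section~\ref{sec:formal-fourier-jacobi-expansions}, whereas the singular nonzero case requires careful bookkeeping of the block decomposition of $\tau$ and of the relevant index matrices.
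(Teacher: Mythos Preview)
Your overall plan matches the paper's: upward induction on $l$ (you write ``downward,'' but what you describe is the step from $l-1$ to $l$ with base case $l=1$), and the positive-definite case is handled identically via Lemma~\ref{la:formal-theta-expansion}.

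The difference is in the singular case. The paper does \emph{not} stratify by the rank of $m'$. It only uses $\GL{l-1}(\ZZ)$ to put a single zero row and column at the bottom, $m' = \left(\begin{smallmatrix} m'' & 0 \\ 0 & 0 \end{smallmatrix}\right)$ with $m'' \in \MatT{l-2}(\QQ)$ arbitrary, and then invokes Lemma~\ref{la:formal-siegel-phi-operator} (the formal Siegel $\Phi$ operator, $l'=1$). That lemma produces a symmetric formal Fourier--Jacobi series $\psi_0 \in \rmF\rmM^{(g-1,\,l-1)}_\bullet$, and the inductive hypothesis at genus $g-1$ gives its modularity; the desired $\psi_{m'}$ is then an honest Fourier--Jacobi coefficient of this genus-$(g-1)$ Siegel modular form. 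This disposes of all singular $m'$ in one stroke.

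Your route for $0 < r < l-1$ instead passes to the cogenus-$r$ coefficient $\psi^{(r)}_{m''}$. To run the positive-definite argument there you would need Lemma~\ref{la:formal-theta-expansion} at $l' = r$ rather than $l' = l-1$ (the paper remarks this is possible but does not prove it), and the resulting components $(h_{\mu''})$ would lie in $\rmF\rmM^{(g-r,\,l-r)}_\bullet(\rho^{(g-r)}_{m''})$, i.e.\ at cogenus $l-r > 1$. So you are not appealing to ``the positive-definite case already handled,'' which only gave cogenus-$1$ information; you are appealing to a strictly stronger statement. The bookkeeping you flag as the main obstacle is real, and the paper's use of Lemma~\ref{la:formal-siegel-phi-operator} is precisely the device that sidesteps it. Your $r=0$ case, on the other hand, is correct and arguably cleaner than the paper's, since it lands directly in cogenus~$1$ at genus $g-l+1$ and can quote Theorem~\ref{thm:rigidity-for-classical-forms-cogenus-1}.
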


\begin{proof}
In Theorem~\ref{thm:rigidity-for-classical-forms-cogenus-1}, we have established that $\rmF\rmM^{(g,1)}_\bullet = \rmM^{(g)}_\bullet$.  We use induction on $l$ to establish all other cases.  That is, we now suppose that $1 < l < g$ and $\rmF\rmM^{(g,l-1)}_\bullet = \rmM^{(g)}_\bullet$.  We will argue that $\rmF\rmM^{(g,l)}_\bullet = \rmM^{(g)}_\bullet$.

We adopt notation of Section~\ref{ssec:siegel-phi-and-fourier-jacobi}.  In particular, we put $l'=l-1$ and fix a symmetric formal Fourier-Jacobi series of weight $k$ and cogenus~$l$
\begin{align*}
  f(\tau)
=
  \sum_{m \in \MatT{l}(\QQ)}
  \phi_m(\tau_{11}, z_1)\,
  e\!\left(m
   \left(\begin{matrix} \tau_{12} & z_{22} \\ \rT z_{22} & \tau_2   \end{matrix}\right) \right)
\text{.}
\end{align*}
We consider its formal Fourier-Jacobi coefficients $\psi_{m'}$ for $m' \in \MatT{l'}(\QQ)$ as in Equation~(\ref{eq:def:fourier-jacobi-coefficients-of-formal-expansions}).  We will show that the formal series
\begin{gather*}
  \sum_{m' \in \MatT{l'}(\QQ)}
  \psi_{m'}(\tau_1, z)\,
  e(m' \tau_2)
\end{gather*}
is a symmetric formal Fourier-Jacobi series of cogenus $l'$.  Symmetry of Fourier coefficients is immediate, and so we are reduced to establishing convergence of all $\psi_{m'}$, thereby proving that $\psi_{m'} \in \rmJ^{(g-l')}_{k, m'}$.

First, consider the case $\det\,m' = 0$.  If $m'$ is of the form
\begin{gather}
\label{eq:rigidity-for-all-cogenera:degenerate-jacobi-index}
  \begin{pmatrix}
  m'' & 0 \\
  0 & 0
  \end{pmatrix}
\text{,}\quad
  m'' \in \MatT{l'-1}(\QQ)
\text{,}
\end{gather}
then Lemma~\ref{la:formal-siegel-phi-operator} in conjunction with our assumptions implies that $\psi_{m'}$ converges.  We reduce the case of general degenerate~$m'$ to the above one.  For every $m'$ with $\det\,m' = 0$ there is a $u' \in \GL{l'}(\ZZ)$ such that $\rT u' m' u'$ is of the form~\eqref{eq:rigidity-for-all-cogenera:degenerate-jacobi-index}.  Invariance of $f$ under the action of
\begin{gather*}
  \begin{pmatrix}
  1 & 0 \\
  0 & u'
  \end{pmatrix}
  \in \GL{g}(\ZZ)
\text{,}\quad
  u' \in \GL{l'}(\ZZ)
\end{gather*}
shows that
\begin{gather*}
  \psi_{\rT u' m' u'}(\tau_1, z )
=
  \det(u')^k\,
  \psi_{m'}\left(\tau_1,
    z  \begin{pmatrix}
        1 & 0 \\
        0 & \rT u'
      \end{pmatrix}
  \right)
\text{.}
\end{gather*}
This establishes the convergence of $\psi_{m'}$ in the case $\det\,m' = 0$.

Next, we consider the case of invertible $m'$.  We use Lemma~\ref{la:formal-theta-expansion} to represent $\psi_{m'}$ in terms of a vector valued symmetric formal Fourier-Jacobi series~$(h_{m', \mu'})_{\mu'}$ of genus~$g-1$.  By the assumptions, it converges and hence so does $\psi_{m'}$.
\end{proof}

\begin{lemma}
\label{la:modular-forms-separate-points}
Let $\rho$ be a finite dimensional representation of $\Mp{2g}(\ZZ)$ with finite index kernel.  Then there exists a $k \in \frac{1}{2}\ZZ$ such that $\rmM^{(g)}_{k}(\rho^\vee)$ separates points of $V(\rho)$ at every~$\tau \in \HS_g$ which is not an elliptic fixed point.  That is, for every such $\tau $ and every $v \in V(\rho)$ there is an $f \in \rmM^{(g)}_{k}(\rho^\vee)$ such that $f(\tau)(v) \ne 0$.
\end{lemma}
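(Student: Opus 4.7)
The plan is to reduce to the scalar case by coset averaging and then invoke the Baily--Borel embedding. Let $\Gamma := \ker(\rho)$, a finite-index normal subgroup of $\Mp{2g}(\ZZ)$ by hypothesis; the same subgroup equals $\ker(\rho^\vee)$.

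Given any $h \in \rmM_k(\Gamma)$ and any $v^\vee \in V(\rho^\vee)$, I would define a Poincar\'e-type sum
\[
  F_{h,v^\vee}(\tau) \;:=\; \sum_{\bar\gamma \in \Gamma \backslash \Mp{2g}(\ZZ)} \omega_\gamma(\tau)^{-2k}\, h(\gamma\tau)\, \rho^\vee(\gamma)^{-1} v^\vee,
\]
where $\omega_\gamma$ denotes the square-root automorphy factor of the metaplectic lift of $\gamma$. Each summand depends only on $\bar\gamma$ because $h|_k\Gamma = h$ and $\Gamma\subseteq\ker\rho^\vee$, and a short calculation with the metaplectic cocycle (of the same shape as the proof of Proposition~\ref{prop:slope-bound-vector-valued}) shows that $F_{h,v^\vee}\in\rmM^{(g)}_k(\rho^\vee)$. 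Pairing with $v$ yields
\[
  F_{h,v^\vee}(\tau)(v) \;=\; \sum_{\bar\gamma} \omega_\gamma(\tau)^{-2k}\, h(\gamma\tau)\, v^\vee\!\big(\rho(\gamma)^{-1}v\big).
\]

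Since $v \neq 0$, first pick $v^\vee$ with $v^\vee(v) \neq 0$. Next, to select $h$, I would exploit separation of points for $\Gamma$: by the Baily--Borel embedding theorem there is a $k_0$ such that for all sufficiently large $k$ (in an arithmetic progression on which $\rmM_k(\Gamma)$ is nonzero) the space $\rmM_k(\Gamma)$ realises the Baily--Borel compactification of $\Gamma\backslash\HS_g$ as a projective variety, and in particular separates its points. Because $\tau$ is not an elliptic fixed point, its stabiliser in $\Mp{2g}(\ZZ)$ is the preimage of $\{\pm 1\}$, a fixed finite group independent of $\tau$, so the $\Mp{2g}(\ZZ)$-orbit of $\tau$ projects to at most $[\Mp{2g}(\ZZ):\Gamma]$ distinct points of $\Gamma\backslash\HS_g$. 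I can therefore choose $h \in \rmM_k(\Gamma)$ with $h(\tau)\neq 0$ and $h(\gamma\tau)=0$ for every coset representative $\gamma$ whose image in $\Gamma\backslash\HS_g$ differs from $[\tau]$.

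The main obstacle will be handling the remaining cosets, namely those $\bar\gamma$ with $\gamma\tau\in\Gamma\tau$, equivalently $\gamma\in\Gamma\cdot\mathrm{Stab}_{\Mp{2g}(\ZZ)}(\tau)$. After substituting this $h$, the value $F_{h,v^\vee}(\tau)(v)$ collapses to a finite linear combination of numbers $v^\vee\!\big(\rho(\gamma)^{-1}v\big)$ with $\gamma$ ranging over the finite quotient $\mathrm{Stab}_{\Mp{2g}(\ZZ)}(\tau)/\big(\mathrm{Stab}_{\Mp{2g}(\ZZ)}(\tau)\cap\Gamma\big)$. To rule out cancellation one further averages $h$ against a character of this finite stabiliser quotient in which $v$ has nonzero weight under $\rho$---equivalently, chooses $v^\vee$ inside a stabiliser-isotypic component non-orthogonal to $v$, which is possible because the stabiliser acts linearly on $V(\rho)$ through a finite group. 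This stabiliser bookkeeping is the only genuine technicality; once it is dispensed with, $F_{h,v^\vee}(\tau)(v)$ becomes a nonzero scalar multiple of $v^\vee(v)\neq 0$ and the lemma follows.
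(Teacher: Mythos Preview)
The paper does not actually supply its own argument here; it simply cites Proposition~2.4 of~\cite{bruinier-2013} for $g=2$ and asserts that the proof carries over verbatim. Your averaging construction (scalar forms on $\Gamma=\ker\rho$ pushed up to $\rho^\vee$-valued forms via a coset sum, combined with Baily--Borel to manufacture enough scalar forms) is exactly the standard mechanism behind such results and is presumably what the cited proof does.

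That said, your final ``only genuine technicality'' is not disposed of by what you wrote. For non-elliptic $\tau$ the stabiliser is the center $Z$ of $\Mp{2g}(\ZZ)$, which acts \emph{trivially} on $\HS_g$; hence $Z$ acts on $\rmM_k(\Gamma)$ by the single scalar character $z\mapsto\omega_z^{-2k}$, and ``averaging $h$ against a character of $Z$'' either kills $h$ or rescales it---it cannot isolate a desired isotypic piece. What your reduction actually yields is
\[
F_{h,v^\vee}(\tau)(v)\;=\;h(\tau)\sum_{z\in(Z\cap\Gamma)\backslash Z}\omega_z^{-2k}\,v^\vee\!\big(\rho(z)v\big),
\]
and if $v^\vee$ is chosen in the dual of the $\chi$-isotypic piece containing a nonzero component $v_\chi$ of $v$, this becomes $h(\tau)\,v^\vee(v_\chi)\cdot\sum_z\omega_z^{-2k}\chi(z)$. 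The character sum is nonzero precisely when $\chi(z)=\omega_z^{2k}$ for all $z\in Z$. So the weight $k$ is not free here: it is pinned down by the central character $\chi$, and this is the step you are missing. If $\rho$ supports several distinct $Z$-characters no single $k$ serves for every $v$; one then argues one isotypic component at a time, which is all that is needed for Lemma~\ref{la:rigidity-for-all-types}, since any $f\in\rmF\rmM^{(g)}_k(\rho)$ already takes values in a single $Z$-isotypic summand of $V(\rho)$. (A smaller point: Baily--Borel literally gives separation of points, whereas you need a section vanishing on a prescribed finite set and nonzero at one further point; this follows by taking products once the bundle is very ample, and the bound $[\Mp{2g}(\ZZ):\Gamma]$ on the number of points lets you choose $k$ uniformly in $\tau$.)
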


\begin{proof}
This is proved in Proposition~2.4 of \cite{bruinier-2015} for $g=2$.  The proof literally carries over to the case of arbitrary genus.
\end{proof}

\begin{lemma}
\label{la:rigidity-for-all-types}
Fix $g \ge 2$ and $0 < l < g$.  Assume that $\rmF\rmM^{(g,l)}_\bullet = \rmM^{(g)}_\bullet$.  Then $\rmF\rmM^{(g,l)}_\bullet(\rho) = \rmM^{(g)}_\bullet(\rho)$ for all finite dimensional representations $\rho$ of~$\Mp{2g}(\ZZ)$ with finite index kernel.
\end{lemma}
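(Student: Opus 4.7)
\quad The plan is to bootstrap from the scalar hypothesis by pairing $f$ against a basis of $\rmM^{(g)}_{k'}(\rho^\vee)$ for a sufficiently large $k'$, and to invert this pairing via Lemma~\ref{la:modular-forms-separate-points} in order to recover $f$ as an honest Siegel modular form of type $\rho$.

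For any $h\in\rmM^{(g)}_{k'}(\rho^\vee)$, contraction of Fourier--Jacobi coefficients via the duality $\rho^\vee\otimes\rho\to\CC$ produces a scalar symmetric formal Fourier--Jacobi series $\langle h,f\rangle\in\rmF\rmM^{(g,l)}_{k+k'}$, which by the scalar hypothesis is an honest Siegel modular form $G_h\in\rmM^{(g)}_{k+k'}$. I will fix $k'$ via Lemma~\ref{la:modular-forms-separate-points} so that the evaluation map $\rmM^{(g)}_{k'}(\rho^\vee)\to V(\rho^\vee)$ is surjective at every non-elliptic point of $\HS_g$, pick a basis $h_1,\ldots,h_N$ of $\rmM^{(g)}_{k'}(\rho^\vee)$, and set $G_i=G_{h_i}\in\rmM^{(g)}_{k+k'}$. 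With $d=\dim V(\rho)$, on each open $U_I\subset\HS_g$ on which some $d$-subset $\{h_i(\tau):i\in I\}$ forms a basis of $V(\rho^\vee)$, matrix inversion will define a holomorphic $V(\rho)$-valued function $F_I$ on $U_I$ characterized by $h_i(\tau)(F_I(\tau))=G_i(\tau)$ for $i\in I$; the separation property guarantees that the $U_I$ cover the non-elliptic locus.

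The hard part will be to show that the $F_I$ agree on overlaps, which reduces to the following compatibility: whenever $h\in\rmM^{(g)}_{k'}(\rho^\vee)$ vanishes at a point $\tau=(\tau_1^0,z^0,\tau_2^0)\in\HS_g$, the scalar modular form $G_h$ also vanishes at $\tau$. To establish this I plan to work locally in the completed local ring $\wht{\calO}_\tau$: since $\Im\tau_2^0$ is positive definite, the factors $e(m\tau_2^0)$ decay exponentially in $m$, so Taylor expanding each Jacobi form $\phi_m$ at $(\tau_1^0,z^0)$ and collecting terms should realize $f$ as a convergent element of $\wht{\calO}_\tau\otimes V(\rho)$. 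Under this local realization $G_h$ coincides with the pointwise contraction $h\cdot f$ inside $\wht{\calO}_\tau$, which visibly vanishes at $\tau$ whenever $h$ does. The delicate input is a polynomial-in-$m$ growth bound on the Fourier coefficients of the $\phi_m$ needed to secure this local convergence; I expect to extract it from the $\GL{g}(\ZZ)$-symmetry imposed in Definition~\ref{def:formal-fourier-jacobi-expansions}, which controls the coefficients $c(f;t)$ in terms of representatives of Minkowski-reduced forms.

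Granted the glueing, $F$ will be a holomorphic $V(\rho)$-valued function on the non-elliptic locus of $\HS_g$, extending across the remaining proper analytic subset by Riemann--Hartogs. The modular transformation $F(\gamma\tau)=\omega(\tau)^{2k}\rho(\gamma,\omega)F(\tau)$ will then follow from the $\Mp{2g}(\ZZ)$-equivariance of the defining system $h_i(\tau)(F(\tau))=G_i(\tau)$. To identify $F$ with $f$ as formal series I will compare cogenus-$l$ Fourier--Jacobi expansions: writing $\phi'_m$ for the FJ coefficients of $F$ and $\psi_{i,n}$ for those of $h_i$, the equalities $\langle h_i,F\rangle=G_i=\langle h_i,f\rangle$ give $\sum_{m+n=M}\langle\psi_{i,n},\phi'_m-\phi_m\rangle=0$ for all $i$ and $M$; induction on $M$ combined with the separation property will force $\phi'_m=\phi_m$ for every $m$, completing the proof.
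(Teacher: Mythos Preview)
Your overall strategy---pair $f$ against $\rmM^{(g)}_{k'}(\rho^\vee)$, invoke the scalar hypothesis on each $G_h=\langle h,f\rangle$, then invert---is exactly the paper's, and you correctly isolate the crux as the compatibility $h(\tau_0)=0\Rightarrow G_h(\tau_0)=0$, which is equivalent to the local pieces $F_I$ gluing. The gap is in your proposed proof of this compatibility. You want to realize $f$ inside $\wht{\calO}_{\tau_0}\otimes V(\rho)$ by Taylor-expanding the $\phi_m$ and summing against $e(m\tau_2^0)$, using a polynomial-in-$m$ growth bound ``extracted from the $\GL{g}(\ZZ)$-symmetry''. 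No such bound follows: the symmetry in Definition~\ref{def:formal-fourier-jacobi-expansions} only identifies Fourier coefficients within a single $\GL{g}(\ZZ)$-orbit and says nothing about growth across distinct orbits. More to the point, realizing $f$ as a convergent element of $\wht{\calO}_{\tau_0}$ \emph{is} local convergence of $f$ at the interior point $\tau_0$---precisely what the lemma is meant to establish---so this route is circular, and if it worked the pairing argument would be superfluous.

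The paper's route is algebraic rather than analytic. Writing $f=\sum_\alpha f_\alpha e_\alpha$ in a basis of $V(\rho)$ and $h_i=\sum_\alpha h_{i\alpha}e_\alpha^*$, the defining relation $G_i=\sum_\alpha h_{i\alpha}f_\alpha$ is an identity of \emph{formal} Fourier series. For any $d$ of the $h_i$ with $\Delta=\det(h_{i\alpha})\not\equiv 0$, Cramer's rule applied formally gives $\Delta\cdot f_\alpha=C_\alpha$, where $C_\alpha$ is a polynomial in the convergent functions $h_{i\alpha}$ and $G_i$, hence itself convergent on $\HS_g$. Thus $F_\alpha:=C_\alpha/\Delta$ is meromorphic; the formal cross-multiplication $\Delta'C_\alpha=\Delta C'_\alpha$ shows it is independent of the chosen frame, and Lemma~\ref{la:modular-forms-separate-points} supplies at every non-elliptic $\tau_0$ a frame with $\Delta'(\tau_0)\neq 0$, whence $F$ has no poles. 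This is the content of the paper's ``identify $f$ with a meromorphic Siegel modular form \dots\ therefore $f$ has no singularities''. The same formal identity $\Delta f_\alpha=\Delta F_\alpha$ then forces $f=F$ at the level of Fourier expansions, without your induction on $M$---which, incidentally, would need the index-$0$ Fourier--Jacobi coefficients of the $h_i$, rather than the $h_i$ themselves, to separate $V(\rho)$, and Lemma~\ref{la:modular-forms-separate-points} does not give that.
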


\begin{proof}
We proceed as in the proof of Theorem~1.2 of~\cite[]{bruinier-2015}:
%Fix $f \in \rmF\rmM^{(g)}_k(\rho)$.
Employing Lemma~\ref{la:modular-forms-separate-points}, choose $k'$ such that $\rmM^{(g)}_{k'}(\rho^\vee)$ separates points of $V(\rho)$ at every $\tau \in \HS_g$ which is not an elliptic fixed point.  Write $\langle \,,\, \rangle$ for the canonical bilinear pairing
\begin{gather*}
  \rmF\rmM^{(g,l)}_k(\rho) \times \rmM^{(g)}_{k'}(\rho^\vee)
\lra
  \rmF\rmM^{(g,l)}_{k+k'}
\end{gather*}
induced by the evaluation map $V(\rho) \times V(\rho^\vee) \rightarrow \CC$.

Fix $f \in \rmF\rmM^{(g,l)}_k(\rho)$, and for all $f'\in \rmM^{(g)}_{k'}(\rho^\vee)$ consider $\langle f, f' \rangle$, which by assumptions is a Siegel modular form of weight~$k + k'$.  This allows us to identify $f$ with a meromorphic Siegel modular form of weight~$k$.  By the choice of~$k'$, $\rmM_{k'}(\rho^\vee)$ separates points of $V(\rho)$, and therefore~$f$ has no singularities.
\end{proof}

\begin{theorem}
\label{thm:rigidity-general-case}
Suppose that $2 \le g$, $0 < l < g$, and $k \in \frac{1}{2}\ZZ$.  Let $\rho$ be a finite dimensional representation of~$\Mp{2g}(\ZZ)$ that factors through a finite quotient.  Then we have
\begin{gather*}
  \rmF\rmM^{(g,l)}_{k}(\rho)
=
  \rmM^{(g)}_{k}(\rho)
\text{.}
\end{gather*}
\end{theorem}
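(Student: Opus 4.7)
The plan is to proceed by induction on the genus $g \geq 2$, assembling three pieces that are already in place: Theorem~\ref{thm:rigidity-for-classical-forms-cogenus-1} (the scalar cogenus~$1$ case), Lemma~\ref{la:rigidity-for-all-cogenera} (which upgrades cogenus $1$ to arbitrary cogenus, for the trivial representation), and Lemma~\ref{la:rigidity-for-all-types} (which upgrades the trivial representation to an arbitrary finite dimensional $\rho$ with finite index kernel, at fixed $g$ and $l$). Since the graded identity $\rmF\rmM^{(g,l)}_\bullet(\rho) = \rmM^{(g)}_\bullet(\rho)$ splits into the weight-by-weight identities claimed in the theorem, it suffices to prove the graded statement.

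For the base case $g=2$ the only admissible value is $l=1$. Theorem~\ref{thm:rigidity-for-classical-forms-cogenus-1} gives $\rmF\rmM^{(2)}_\bullet = \rmM^{(2)}_\bullet$, which under the conventions of Section~\ref{sec:formal-fourier-jacobi-expansions} is exactly the trivial-representation version of $\rmF\rmM^{(2,1)}_\bullet = \rmM^{(2)}_\bullet$. Applying Lemma~\ref{la:rigidity-for-all-types} with $g=2$, $l=1$ then extends this equality to an arbitrary finite dimensional representation $\rho$ of $\Mp{4}(\ZZ)$ with finite index kernel.

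For the inductive step, fix $g \geq 3$ and assume that the theorem holds for every $g'$ with $2 \leq g' < g$, every admissible cogenus $l'$, and every finite dimensional representation $\rho'$ with finite index kernel. Specializing to $l'=1$, this says $\rmF\rmM^{(g')}_\bullet(\rho') = \rmM^{(g')}_\bullet(\rho')$ for all such $g'$ and $\rho'$, which is precisely the hypothesis of Lemma~\ref{la:rigidity-for-all-cogenera}. That lemma yields $\rmF\rmM^{(g,l)}_\bullet = \rmM^{(g)}_\bullet$ for every $1 \leq l < g$ in the scalar case. Finally, Lemma~\ref{la:rigidity-for-all-types}, applied separately for each such $l$, promotes this to $\rmF\rmM^{(g,l)}_\bullet(\rho) = \rmM^{(g)}_\bullet(\rho)$ for every finite dimensional representation $\rho$ of $\Mp{2g}(\ZZ)$ with finite index kernel, completing the induction.

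There is no serious obstacle here; the argument is bookkeeping once one notices how Lemmas~\ref{la:rigidity-for-all-cogenera} and~\ref{la:rigidity-for-all-types} interlock. The one point worth flagging is that Lemma~\ref{la:rigidity-for-all-cogenera} requires the full vector-valued inductive hypothesis at every smaller genus, not merely the scalar one. This is why Lemma~\ref{la:rigidity-for-all-types} must be invoked at each stage of the induction, so that the vector-valued equality is propagated forward from $g'$ to $g$ rather than only the scalar statement.
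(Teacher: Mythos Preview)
Your proof is correct and follows exactly the approach the paper takes: the paper's own proof is the single sentence ``The assertion follows by combining Lemmas~\ref{la:rigidity-for-all-cogenera} and~\ref{la:rigidity-for-all-types} and Theorem~\ref{thm:rigidity-for-classical-forms-cogenus-1},'' and you have simply written out the induction on $g$ that this sentence encodes. Your observation that Lemma~\ref{la:rigidity-for-all-cogenera} genuinely requires the vector-valued hypothesis at lower genera (so that Lemma~\ref{la:rigidity-for-all-types} must be applied at each stage) is exactly the point that makes the induction close.
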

\begin{proof}
The assertion follows by combining Lemmas~\ref{la:rigidity-for-all-cogenera} and~\ref{la:rigidity-for-all-types} and Theorem~\ref{thm:rigidity-for-classical-forms-cogenus-1}.
\end{proof}

%\section{Possible Extensions}

\section{Applications and Possible Extensions}
\label{sec:extensions-of-this-work}

\subsection{Kudla's modularity conjecture}
\label{ssec:kudla-conjecture}

We briefly explain how Theorem~\ref{thm:rigidity-general-case} can be applied in the context of Kudla's conjecture on modularity of the generating series of special cycles on Shimura varieties associated with orthogonal groups \cite[Section 3, Problem 3]{kudla-2004}. For the case of genus~$2$ see also~\cite{raum-2013a,bruinier-2015}.

Let $(V,Q)$ be a quadratic space over $\Q$ of signature $(n,2)$, and write $(\cdot\mathop{,}\cdot)$ for the bilinear
form corresponding to $Q$.
The hermitian symmetric space associated with the orthogonal group of $V$ can be realized as
\begin{gather*}
D=\{ z\in V\otimes_\Q \C:\; \text{$(z,z)=0$ and $(z,\bar z)<0$}\} \slashdiv \C^\times.
\end{gather*}
This domain has two connected components. We fix one of them, and denote it by $D^+$.
Let $L\subset V$ be an even lattice and write $L'$ for the dual
lattice.  Let $\Gamma\subset \Orth{}(L)$ be a subgroup of finite index
which acts trivially on the discriminant group $L'/L$ and which takes
$D^+$ to itself.  The quotient
\begin{gather*}
X_\Gamma= \Gamma \mathop{\bs} D^+
\end{gather*}
has a structure as a quasi-projective algebraic variety of dimension $n$. It
has a canonical model defined over a cyclotomic extension
%$E_\Gamma$
of $\Q$.

For $1\leq g\leq n$ let $S_{L,g}$ be the complex vector space of functions $(L'/L)^g\to \C$. The group $\Mp{2g}(\Z)$ acts on $S_{L,g}$ through the Weil representation $\omega_{L,g}$.
%\[
%\omega_{L,g}: \Mp{2g}(\Z) \longrightarrow \GL{}(S_{L,g}),
%\]
%see e.g.~\cite{kudla-2004}.
For every positive semi-definite $t \in \MatT{g}(\Q)$ of rank~$r(t)$ and every $\varphi \in S_{L,g}$ there is a special cycle class $Z(t,\varphi)$ in the Chow group $\CH^{g}(X_\Gamma)_\C$
of codimension $g$ cycles on $X_\Gamma$ with complex coefficients, see \cite{kudla-1997}, \cite{kudla-2004}.  We denote by $Z(t)$ the element $\varphi\mapsto Z(t,\varphi)$ of $\Hom\big( S_{L,g}, \CH^{g}(X_\Gamma)_\C \big)$.

\begin{conjecture}[Kudla]
\label{conj:ku}
The formal generating series
\begin{gather*}
  A_g(\tau)
=
  \sum_{\substack{t \in \MatT{g}(\Q)\\ t\geq 0}} Z(t) \, q^t
\end{gather*}
with coefficients in $S_{L,g}^\vee\otimes_\C \CH^{g}(X_\Gamma)_\C$ is a Siegel modular form in $M^{(g)}_{1+n/2}(\omega_{L,g}^\vee)$ with values in $\CH^{g}(X_\Gamma)_\C$.
\end{conjecture}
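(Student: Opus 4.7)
The plan is to verify that the formal generating series $A_g$ qualifies as a symmetric formal Fourier-Jacobi series of genus $g$, cogenus $l=g-1$, weight $1+n/2$, and type $\omega_{L,g}^\vee$, and then to invoke Theorem~\ref{thm:rigidity-general-case}. Concretely, one has to check two things: (i) the symmetry condition of Definition~\ref{def:formal-fourier-jacobi-expansions} for the coefficients $Z(t)$, and (ii) that the cogenus $g-1$ Fourier-Jacobi coefficients of $A_g$ are honest vector-valued Jacobi forms of weight $1+n/2$ and the correct index and type.

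For (i), the symmetry $Z(\rT u t u,\varphi)=Z(t,\omega_{L,g}(\rot u^{-1})\varphi)$ for $u\in\GL{g}(\ZZ)$ follows directly from the geometric definition of the special cycles in \cite{kudla-1997}: the cycle $Z(t,\varphi)$ depends only on the $\GL{g}(\ZZ)$-orbit of the pair $(t,\varphi)$, because a change of basis of $\ZZ^g$ permutes the tuples of vectors of Gram matrix $t$ and simultaneously transforms the Schwartz data via the Weil representation. Dualizing, the coefficient $Z(t)\in\Hom(S_{L,g},\CH^g(X_\Gamma)_\C)$ satisfies exactly the symmetry required in Definition~\ref{def:formal-fourier-jacobi-expansions} for the representation $\omega_{L,g}^\vee$ (here one uses that on $\SL{2g}(\ZZ)$ the factor $\omega^{2k}$ is trivial for $k=1+n/2$ when $n$ is even, and absorbs correctly into the metaplectic extension in the half-integral case).

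For (ii), this is precisely the content of Wei Zhang's work \cite{zhang-2009}. For each $0\le m\in\MatT{g-1}(\QQ)$, he expresses the $m$-th Fourier-Jacobi coefficient of $A_g$ as a finite sum of pushforwards along embeddings $X_{\Gamma'}\hookrightarrow X_\Gamma$ of Shimura subvarieties of codimension $g-1$, of generating series of special divisors twisted by theta functions of index $m$ attached to positive definite sublattices. Borcherds' theorem \cite{borcherds-1999}, applied on each such $X_{\Gamma'}$, asserts modularity of the divisor-valued generating series, and combining with the theta expansion one obtains that each Fourier-Jacobi coefficient is a vector-valued Jacobi form in $\rmJ^{(g-1)}_{1+n/2,m}(\omega_{L,g}^\vee)$. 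Together with (i), this places $A_g$ inside $\rmF\rmM^{(g,g-1)}_{1+n/2}(\omega_{L,g}^\vee)$.

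With both inputs in hand the proof is immediate: Theorem~\ref{thm:rigidity-general-case}, applied with $l=g-1$ (which requires $g\ge 2$; the case $g=1$ is Borcherds' theorem itself), yields
\[
A_g \in \rmF\rmM^{(g,g-1)}_{1+n/2}(\omega_{L,g}^\vee) = \rmM^{(g)}_{1+n/2}(\omega_{L,g}^\vee),
\]
as claimed. The main obstacle to carrying out this plan cleanly is bookkeeping: one must match the precise form of Zhang's Fourier-Jacobi decomposition (including its dependence on the chosen sublattice and the Weil representation identifications on discriminant groups of the sublattice and $L$) with the notational setup of Definition~\ref{def:formal-fourier-jacobi-expansions}, and verify the compatibility of the metaplectic cocycles in half-integral weight. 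Beyond this, no new geometric or analytic input is required.
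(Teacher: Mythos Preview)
Your proposal is correct and follows essentially the same route as the paper: verify the $\GL{g}(\ZZ)$-symmetry of the coefficients $Z(t)$ (which the paper dismisses as ``trivial''), invoke Zhang's theorem for the Jacobi-form property of the cogenus $g-1$ Fourier-Jacobi coefficients, and then apply Theorem~\ref{thm:rigidity-general-case}. Your write-up is in fact more explicit than the paper's on the symmetry check and on the $g=1$ edge case.
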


The analogous statement for the cohomology classes in $H^{2g}(X_\Gamma)$ of the $Z(t)$ was proved by Kudla and Millson in \cite{kudla-millson-1990}.

\begin{theorem}
Conjecture \ref{conj:ku} is true.
\end{theorem}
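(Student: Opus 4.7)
The plan is to apply Theorem~\ref{thm:rigidity-general-case} to $A_g$ with cogenus $l=g-1$, weight $1+n/2$, and type $\omega_{L,g}^\vee$.  The case $g=1$ is handled directly by Borcherds' theorem~\cite{borcherds-1999}, so I would assume $g\ge 2$ throughout.  Since $\omega_{L,g}$ factors through a finite quotient of $\Mp{2g}(\ZZ)$, so does $\omega_{L,g}^\vee$, and the main theorem is applicable in this type.  Because $\CH^g(X_\Gamma)_\C$ need not be finite dimensional, I would first reduce to the scalar-valued setting: for every $\ell\in \Hom_\C(\CH^g(X_\Gamma)_\C,\C)$ the series $\ell\circ A_g$ has coefficients in the finite dimensional space $S_{L,g}^\vee$, and $A_g$ is modular if and only if $\ell\circ A_g\in \rmM^{(g)}_{1+n/2}(\omega_{L,g}^\vee)$ for every such~$\ell$.

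To put $\ell\circ A_g$ into the framework of Definition~\ref{def:formal-fourier-jacobi-expansions} I have to verify two properties.  First, the cogenus-$(g-1)$ Fourier-Jacobi coefficients of $A_g$ are Jacobi forms of weight $1+n/2$, the prescribed index, and type $\omega_{L,g}^\vee$: this is precisely Zhang's theorem~\cite{zhang-2009}, which writes each such coefficient as a finite $\C$-linear combination of push-forwards, under embeddings of codimension-$(g-1)$ sub-Shimura varieties, of generating series of special divisors, each of which is a vector-valued elliptic modular form of the relevant Weil-representation type by Borcherds~\cite{borcherds-1999}.  Second, the $\GL{g}(\ZZ)$-symmetry of the Fourier coefficients of $A_g$ is forced by the construction of the cycles $Z(t,\varphi)$: they are defined intrinsically from the pair $(t,\varphi)$ viewed as data on the rank-$g$ lattice $L^g$, so simultaneous base change by $u\in \GL{g}(\ZZ)$ sends $(t,\varphi)$ to $(\rT u t u,\varphi\circ u^{-1})$ without altering the underlying cycle; concretely, the natural $\GL{g}(\ZZ)$-action on $S_{L,g}$ is intertwined with the restriction of $\omega_{L,g}^\vee$ to the Levi factor $\rot(\GL{g}(\ZZ))$, which is exactly the compatibility demanded in Definition~\ref{def:formal-fourier-jacobi-expansions}.

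Granting these two properties, Theorem~\ref{thm:rigidity-general-case} applied to $\ell\circ A_g$ immediately yields $\ell\circ A_g\in \rmM^{(g)}_{1+n/2}(\omega_{L,g}^\vee)$ for every $\ell$, which establishes the conjecture.  The main conceptual step is already carried out by our main theorem; the remaining work is bookkeeping of Weil-representation factors in the symmetry check, and the hard part will be making sure that the sign and automorphy factors arising from the $\GL{g}(\ZZ)$-action on the Chow-valued coefficients match exactly the factor $\omega^{2k}\rho(\rot(u),\omega)$ that appears in Definition~\ref{def:formal-fourier-jacobi-expansions}, since any discrepancy there would invalidate the application of Theorem~\ref{thm:rigidity-general-case}.
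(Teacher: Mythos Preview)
Your proposal is correct and follows essentially the same approach as the paper: invoke Zhang's result to identify the cogenus-$(g-1)$ Fourier--Jacobi coefficients as Jacobi forms, note the $\GL{g}(\ZZ)$-symmetry of the special-cycle coefficients, and apply Theorem~\ref{thm:rigidity-general-case}. Your extra care in reducing to linear functionals on the (possibly infinite-dimensional) Chow group and in separating off the case $g=1$ via Borcherds is appropriate and slightly more explicit than the paper, which simply records $A_g \in \rmF\rmM^{(g,g-1)}_{1+n/2}(\omega_{L,g}^\vee) \otimes_\C \CH^g(X_\Gamma)_\C$ and declares the symmetry condition trivial.
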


\begin{proof}
For $m\in  \MatT{g-1}(\Q)$ positive semi-definite and $(\tau_1,z)\in \H\times \C^{g-1}$ denote by
\begin{align*}
\phi_m(\tau_1,z) = \sum_{\substack{n\in \Q_{\geq 0}\\ r\in \Mat{1,g-1}(\Q)}} Z\begin{pmatrix}
n & r/2\\\rT r/2 & m
\end{pmatrix} \cdot e(n\tau_1+r\rT z)
\end{align*}
the $m$-th formal Fourier-Jacobi coefficient of cogenus $g-1$.
It was proved by W.~Zhang in \cite{zhang-2009}
that $\phi_m(\tau_1,z)$ is a Jacobi form of weight~$1+n/2$ and index $m$ with values in  $\CH^{g}(X_\Gamma)_\C$, that is, an element of $J^{(1)}_{1+n/2,m}(\omega_{L,g}^\vee)\otimes_\C\CH^{g}(X_\Gamma)_\C$.
Hence $A_g(\tau)$ can be viewed as a formal Fourier-Jacobi series of cogenus $g-1$. Its coefficients trivially satisfy the symmetry condition, and therefore
\[
A_g(\tau) \in \rmF\rmM^{(g,g-1)}_{1+n/2}(\omega_{L,g}^\vee) \otimes_\C\CH^{g}(X_\Gamma)_\C.
\]
Consequently, the assertion follows from Theorem \ref{thm:rigidity-general-case}.
\end{proof}

\begin{corollary}
The subgroup
%$\CH_{\mathrm{sp}}^{g}(X_\Gamma)$
of $\CH^{g}(X_\Gamma)$ generated by the classes
$Z(t,\varphi)$ for $t\in \MatT{g}(\Q)$ posi\-tive semi-definite and $\varphi\in S_{L,g}$ has rank $\leq \dim\big( M_{1+n/2}^{(g)}(\omega_{L,g}^\vee) \big)$.
\end{corollary}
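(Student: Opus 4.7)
The plan is to deduce the bound by a Fourier coefficient comparison once the modularity of $A_g$ is in hand. Set $N := \dim \rmM^{(g)}_{1+n/2}(\omega_{L,g}^\vee)$, which is finite, and choose a basis $f_1, \ldots, f_N$ of this space. By the preceding theorem, $A_g$ lies in $\rmM^{(g)}_{1+n/2}(\omega_{L,g}^\vee) \otimes_\CC \CH^{g}(X_\Gamma)_\CC$. Since the left factor is finite dimensional, every element of this tensor product admits a unique presentation of the form $\sum_{i=1}^N f_i \otimes w_i$ with $w_i \in \CH^{g}(X_\Gamma)_\CC$; applying this to $A_g$ produces distinguished classes $w_1, \ldots, w_N$.

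Next I would extract the $t$-th Fourier coefficient on both sides of the resulting identity to obtain, inside $S_{L,g}^\vee \otimes_\CC \CH^{g}(X_\Gamma)_\CC$, the equality
\[ Z(t) = \sum_{i=1}^N c(f_i; t) \otimes w_i, \]
and then pair with any $\varphi \in S_{L,g}$ to conclude that
\[ Z(t,\varphi) = \sum_{i=1}^N \langle c(f_i; t),\, \varphi \rangle \, w_i. \]
Thus every generator $Z(t,\varphi)$ lies in the $\CC$-linear span of $w_1, \ldots, w_N$, so the rank of the subgroup of $\CH^{g}(X_\Gamma)$ generated by all such cycle classes is at most $N$, which is exactly the asserted bound.

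There is no genuine obstacle here: the deep content is already absorbed into the modularity theorem, and the remaining argument reduces to linear algebra and matching of $q$-expansion coefficients. The only point warranting a moment's care is the interpretation of the tensor product $\rmM^{(g)}_{1+n/2}(\omega_{L,g}^\vee) \otimes_\CC \CH^{g}(X_\Gamma)_\CC$ when the right factor is possibly infinite dimensional, but finite dimensionality of the left factor makes the expansion step unambiguous and justifies the reasoning by elementary multilinear algebra.
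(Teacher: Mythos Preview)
Your argument is correct and is precisely the standard deduction the paper has in mind; the paper states the corollary without proof, treating it as an immediate consequence of the modularity theorem, and your expansion $A_g=\sum_i f_i\otimes w_i$ followed by Fourier coefficient extraction is exactly how one unpacks that. The only step you leave implicit is why containment in an $N$-dimensional $\CC$-subspace of $\CH^g(X_\Gamma)_\CC$ bounds the $\ZZ$-rank of the generated subgroup, but this follows from faithful flatness of $\CC$ over $\QQ$ and is routine.
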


Note that it is not known in general whether the rank of $\CH^{g}(X_\Gamma)$ is finite.

\subsection{Vector valued factors of automorphy}
\label{ssec:extension-to-allvector-valued}

Every finite dimensional representation $\rho_\infty$ of the connected double cover of $\GL{g}(\CC)$ corresponds to a $K_\infty$-type of Siegel modular forms.  Most prominently, symmetric powers of the standard representation are included by this definition.  See van der Geer's introduction into the subject in~\cite{bruinier-van-der-geer-harder-zagier-2008} for details.  Along the very same lines as Lemma~\ref{la:rigidity-for-all-types}, one can prove that symmetric formal Fourier-Jacobi series of vector valued Siegel modular forms in this sense converge.

Indeed, let $\rho_\infty$ be a finite dimensional representation of the connected double cover of~$\GL{g}(\CC)$, and let $\rho$ be a representation of $\Mp{2g}(\ZZ)$ with finite index kernel.  Without further restriction, we may assume that $g \ge 2$.  Then we call a holomorphic function~$f :\, \HS_g \rightarrow V(\rho_\infty) \otimes V(\rho)$ a (doubly) vector valued Siegel modular form if, for every $\gamma \in \Mp{2g}(\ZZ)$, we have
\begin{gather*}
  f(\gamma \tau)
=
  \rho_\infty(c \tau + d) \rho(\gamma)\, f(\tau)
\text{.}
\end{gather*}
We write $\rmM^{(g)}(\rho_\infty,\, \rho)$ for the space of such functions.  There is an obvious analog $\rmF\rmM^{(g,l)}(\rho_\infty,\, \rho)$ in the formal setting.  We find that
\begin{gather}
 \rmF\rmM^{(g,l)}(\rho_\infty,\, \rho)
=
\rmM^{(g)}(\rho_\infty,\, \rho)
\text{,}
\end{gather}
if $0 < l < g$.

\subsection{Computation of Siegel modular forms}
\label{ssec:compuation}

Symmetric formal Fourier-Jacobi series have appeared in~\cite{ibukiyama-poor-yuen-2012,raum-2013a} in computations of Siegel modular forms and paramodular forms of genus~$2$.  Using the approach presented in this paper, we can formulate an algorithm to compute Siegel modular forms of arbitrary genus, weight, and type.

We define symmetric formal Fourier-Jacobi polynomials.  Given~$m_0 \in \QQ$, write $f = \sum_m \phi_m\, e(m \tau_2)$ for an element of $\bigoplus_{0 \le m < m_0}\! \rmJ^{(g-1)}_{k, m}(\rho)$.  Define its Fourier coefficients as in the case of symmetric formal Fourier-Jacobi series.  We say that $f$ is symmetric, if $c(f;\, t) = \omega^{2k}\rho(\rot(u), \omega)\, c(f;\, \rT u t u)$ for all $u\in \GL{g}(\Z)$ whenever the bottom right entry of both $t$ and $\rT u t u$ are less than~$m_0$.  Denote the space of symmetric formal Fourier-Jacobi polynomials by~$\rmF\rmM^{(g)}_{k, < m_0}(\rho)$.

One can reason as in~\cite{raum-2013a} that the natural map
\begin{gather*}
  \rmF\rmM^{(g)}_{k, < m_0}(\rho)
\lra
  \rmF\rmM^{(g)}_{k, < m'_0}(\rho)
\end{gather*}
is injective, if $m_0 > m'_0 > m^{(g)}_0$ for some $m^{(g)}_0$ that depends on $g$, $\rho$, and~$k$.  Further, Theorem~\ref{thm:rigidity-general-case} implies that
\begin{gather*}
  \rmM^{(g)}_k(\rho)
=
  \projlim_{0 < m_0 \in \ZZ}
  \rmF\rmM^{(g)}_{k, < m_0}(\rho)
\text{.}
\end{gather*}
Both statements together imply that $\rmM^{(g)}_k(\rho) = \rmF\rmM^{(g)}_{k, < m_0}(\rho)$ for some $m_0$.

An algorithm to compute (truncated) Fourier expansions of Siegel modular forms can thus be deduced along the lines of Section~8 of~\cite{raum-2013a}.  It would be interesting to have an implementation of this algorithm and to study its performance in practice.

\subsection{Hermitian modular forms and other kinds of automorphic forms}

Classical automorphic forms that behave very much like Siegel modular forms can be defined over imaginary quadratic number fields, totally real fields, and quaternion algebras that are ramified at infinity.  The subject is covered by Siegel~\cite{siegel-1951b} in one of the most general ways, and has been studied by Shimura in a series of papers (see for example~\cite{shimura-1978} and the references therein).  For an exposition specifically on Hermitian modular forms and quaternion modular forms see, e.g.,~\cite{braun-1949} and~\cite{krieg-1985}, respectively.  Symmetric formal Fourier-Jacobi series can be defined for all of them.  Our approach is applicable to several of these automorphic forms, but it does not seem suitable to cover all.  In fact, we expect that Hermitian modular forms over the integers of $\QQ(\sqrt{-1})$, $\QQ(\sqrt{-2})$, and $\QQ(\sqrt{-3})$, and quaternion modular forms over maximal orders of rational quaternion algebras of discriminant $2$ and $3$ can be dealt with in the same way as Siegel modular forms.

If modularity of symmetric formal Fourier-Jacobi series of Hermitian modular forms can be proved, the unitary Kudla conjecture should follow along the lines of Section~\ref{ssec:kudla-conjecture}.

%%%%%%%%%%%%%%%%%%%%%%%%%%%%%%%%%%%%%%%%%%%%%%%%%%
%%% BIBLIOGRAPHY

\renewbibmacro{in:}{}
\renewcommand{\bibfont}{\normalfont\small\raggedright}
\renewcommand{\baselinestretch}{.8}
\vspace{.8em}
\printbibliography%[heading=bibnumbered]
% In case publishers don't support biblatex, switch to amsref (or plain bibtex)
% \bibliography{bibliography.bib}

%%%%%%%%%%%%%%%%%%%%%%%%%%%%%%%%%%%%%%%%%%%%%%%%%%
%%% AFFILIATIONS

\addvspace{1em}
\titlerule[0.15em]\addvspace{0.5em}

{\noindent\small
%Jan Bruinier\\
Fachbereich Mathematik,
Technische Universit\"at Darmstadt,
Schlossgartenstra\ss e~7,
D\nbd 64289 Darmstadt, Germany\\
E-mail: \url{bruinier@mathematik.tu-darmstadt.de} \\
Homepage: \url{http://www.mathematik.tu-darmstadt.de/~bruinier} \\[1.5ex]
{\noindent\small
%Martin Raum\\
Max Planck Institute for Mathematics,
Vivatsgasse~7,
D-53111, Bonn, Germany\\
E-mail: \url{martin@raum-brothers.eu}\\
Homepage: \url{http://raum-brothers.eu/martin}%\\[1.5ex]
}

\end{document}